
%

\documentclass{amsart}
\usepackage{amsrefs}
\usepackage[all]{xy}
\allowdisplaybreaks

\newtheorem{theorem}{Theorem}[section]
\newtheorem{lemma}[theorem]{Lemma}
\newtheorem{proposition}[theorem]{Proposition}
\newtheorem{corollary}[theorem]{Corollary}

\theoremstyle{definition}
\newtheorem{definition}[theorem]{Definition}

\theoremstyle{remark}
\newtheorem{remark}[theorem]{Remark}

\numberwithin{equation}{section}

\begin{document}

\title{Stable Ulrich Bundles on Fano Threefolds with Picard Number 2}


\author{Ozhan Genc}
\address{Department of Mathematics\\
Middle East Technical University\\
 Ankara, Turkey}
\curraddr{Mathematics Research and Teaching Group, Middle East Technical University, Northern Cyprus Campus, KKTC, Mersin 10, Turkey}
\email{ozhangenc@gmail.com}
\thanks{The author was supported by project BAP-01-01-2014-001 of the Middle East Technical University and T\"{U}B\.{I}TAK-B\.{I}DEB PhD scholarship (2211)}


\subjclass[2010]{Primary 14J60}

\keywords{Ulrich bundle, Fano variety}

\date{\today}

\dedicatory{}

\begin{abstract}

In this paper, we consider the existence problem of rank one and two stable Ulrich bundles on imprimitive Fano 3-folds obtained by blowing-up one of $\mathbb{P}^{3}$, $Q$ (smooth quadric in $\mathbb{P}^{4}$), $V_{3}$ (smooth cubic in $\mathbb{P}^{4}$) or $V_{4}$ (complete intersection of two quadrics in $\mathbb{P}^{5}$) along a smooth irreducible curve. We prove that the only class which admits Ulrich line bundles is the one obtained by blowing up a genus 3, degree 6 curve in $\mathbb{P}^{3}$. Also, we prove that there exist stable rank two Ulrich bundles with $c_{1}=3H$ on a generic member of this deformation class.

\end{abstract}

\maketitle

\section{Introduction}

The existence of Ulrich bundles on smooth projective varieties is related to a number of geometric
questions. For instance, the existence of rank 1 or rank 2 Ulrich bundles on a hypersurface
is related to the representation of that hypersurface as a determinant or Pfaffian (\cite{Bea00}). Another
question of interest is the Minimal Resolution Conjecture (MRC) (\cite{L93}, \cite{FMP03}). In \cite{CKM13}, the existence problem of Ulrich bundles on del Pezzo surfaces was related to the MRC for a general smooth curve in the linear system of the first Chern class of the Ulrich bundle. Also, in \cite{ES11}, it is proved that the cone of cohomology tables of vector bundles on a $k$-dimensonal scheme $X \subset \mathbb{P}^{N}$ is the same as the cone of cohomology tables of vector bundles on $\mathbb{P}^{k}$ if and only if there exists an Ulrich bundle on $X$.

It was conjectured in \cite{DFC03} that on any variety there exist Ulrich bundles. Although it is known that any projective curve (\cite{ES09}), hypersurfaces and complete intersections (\cite{HUB91}), cubic surfaces (\cite{CHGS12}), abelian surfaces (\cite{Bea15}), Veronese varieties (\cite{DFC03}) admit Ulrich bundles, such a general existence result is not known. The finer question of determining the minimal rank of Ulrich bundles (which do not contain bundles of lower rank as direct summands) on a given variety seems to be a quite difficult problem.

The problem that has attracted the most attention is the existence of stable Ulrich
bundles with given rank and Chern classes. Stable Ulrich bundles are particularly interesting as they are the building blocks of all Ulrich bundles: Every Ulrich bundle is semistable, and the Jordan-H\"{o}lder factors are stable Ulrich bundles.

There are very few results on Ulrich bundles over Fano 3-folds with Picard number higher than one like \cite{CFM15}. In this paper, we studied the construction of stable Ulrich bundles on imprimitive Fano 3-folds obtained by blowing-up one of $\mathbb{P}^{3}$, $Q$ (smooth quadric in $\mathbb{P}^{4}$), $V_{3}$ (smooth cubic in $\mathbb{P}^{4}$) or $V_{4}$ (complete intersection of two quadrics in $\mathbb{P}^{5}$) along a smooth irreducible curve. There are 36 deformation classes of Fano 3-folds with Picard number $\rho =2$ and 27 of these are imprimitive (\cite[Table 12.3]{IP99}). Among all imprimitive Fano 3-folds of Picard number $\rho =2$, 21 deformation classes are obtained by blowing-up one of $\mathbb{P}^{3}$, $Q$, $V_{3}$ or $V_{4}$ along a smooth irreducible curve. We focus on these 3-folds and we consider rank 1 and 2 stable Ulrich bundles.

Throughout the paper, we use basically Riemann Roch computations, positivity results, Leray spectral sequence, projection formula, the package RandomSpaceCurve of Macaulay2, Casanellas-Hartshorne extension method and computations of local dimension of Quot Scheme. 

First, we prove that the only class which admits rank 1 Ulrich bundles is the one obtained by blowing up a genus 3, degree 6 curve in $\mathbb{P}^{3}$, which is \cite[No:12 in Table 12.3]{IP99}. These varieties admit two classes of rank 1 Ulrich bundles $\mathcal{L}_{1}$ and $\mathcal{L}_{2}$ (Theorem \ref{line bundles}).

The next step is to construct rank 2 stable Ulrich bundles on these varieties. To do this, we first construct rank 2 simple Ulrich bundles (Theorem \ref{simple rank 2 Ulrich}). For this, we use extensions of rank 1 Ulrich bundles $\mathcal{L}_{1}$ and $\mathcal{L}_{2}$:
\[ 0 \rightarrow \mathcal{L}_{1} \rightarrow \mathcal{E} \rightarrow \mathcal{L}_{2} \rightarrow 0 
\]
or
\[ 0 \rightarrow \mathcal{L}_{2} \rightarrow \mathcal{E} \rightarrow \mathcal{L}_{1} \rightarrow 0. 
\]
Then $\mathcal{E}$ is Ulrich and simple; and it has first Chern class $3H$.

Then, to determine whether there exists a stable Ulrich bundle of rank 2 with $c_{1}=3H$, we use the Quot scheme. It is known that stable vector bundles are simple. We consider the local dimension of the Quot scheme at the simple Ulrich bundle with first Chern class $3H$ and find a lower bound to this dimension (Theorem \ref{dimension of Quot scheme at Ulrich bundle with 3H}). Then we find an upper bound to the dimension of the subset parametrizing the non-stable Ulrich bundles (Proposition \ref{dimension of quot scheme by extensions} and Proposition \ref{dimension of quot scheme by extensions 2}). The latter dimension is strictly smaller than the former; that is, there are stable, rank 2 Ulrich bundles with first Chern class $3H$ (Theorem \ref{stable rank 2 Ulrich bundle}).
\subsection{Notations and Conventions}
We work over an algebraically closed field $\mathbb{K}$ of characteristic 0.
\begin{itemize}
	\item $X$ : Smooth projective variety of degree $c$ and dimension $k$ in ${\mathbb{P}}^{N}$.
  \item $H_{X}$: Hyperplane class of $X$. 
	\item $K_{X}$: Canonical divisor of $X$.
	\item $\mathcal{E}(t)$: The vector bundle $\mathcal{E}\otimes \mathcal{O}_{X}(tH_{X})$ where $\mathcal{E}$ is a vector bundle on $X$, and $t \in \mathbb{Z}$.
	\item $C$: Smooth, irreducible curve of degree $d$ and genus $g$.
	\item $Q$: Smooth quadric in $\mathbb{P}^{4}$
	\item $V_{3}$: Smooth cubic in $\mathbb{P}^{4}$
	\item $V_{4}$: Complete intersection of two quadrics in $\mathbb{P}^{5}$
	\item $\widetilde{X}$: Blow-up of $X$ along $C$.
	\item $\widetilde{Y}$: Non-hyperelliptic Fano 3-fold which is obtained by blowing up one of $\mathbb{P}^{3}$, $Q$, $V_{3}$ or $V_{4}$ along $C$.
	\item $Y$: Deformation class of Fano 3-folds which is obtained by blowing up $\mathbb{P}^{3}$ along a smooth irreducible space curve of degree 6 and genus 3, which is scheme theoretic intersection of cubics.
\end{itemize}
\section*{Acknowledgements}
This article represents my doctoral thesis. I would like to thank my doctoral advisor Emre Coskun, and Hursit Onsiper, both of Middle East Technical University. I would also like to thank Yusuf Mustopa for giving me permission to reproduce his Proposition \ref{square of ideal}.
\section{Preliminaries}

\subsection{Fano Varieties}

\begin{definition}
A smooth projective variety $X$ is called a $Fano$ variety if its anticanonical divisor $-K_{X}$ is ample.
\end{definition}

\begin{definition}
A Fano 3-fold is $imprimitive$ if it is isomorphic to the blow-up of a Fano 3-fold along a smooth irreducible curve.
\end{definition}

The classification of Fano 3-folds with $\rho=2$ has been completed and it can be found in \cite[Table 12.3]{IP99}. In this paper, we consider the question of existence of Ulrich bundles on $\widetilde{Y}$.

Upon blowing-up $X$ along $C$, we have the following commutative diagram:
\[
\xymatrix{
& E \ar[d]_{g} \ar@{^{(}->}[r]^j & \widetilde{X}\ar[d]_{f}\\ 
& C \ar@{^{(}->}[r]^i & X}
\]
the map $f$ is the blow-down map and $E=\mathbb{P}N$ is the exceptional divisor, where $N$ is the normal bundle of $C$ in $X$. Recall that $\widetilde{X}$ stands for $\widetilde{Y}$. Let $h$ be the class of a plane in $A^{1}(X)$, and let $l=h^{2}$ be the class of a line in $A^{2}(X)$. We will denote $\widetilde{h}$ and $\widetilde{l}$ for the pullbacks of $h$ and $l$ to $\widetilde{X}$ respectively; and $e$ denotes the class of the exceptional divisor. Also for any divisor $D\in Z^{1}(C)$, we denote by $F_{D}=g^{*}D\in Z^{1}(C)$ the corresponding linear combination of fibers $E\rightarrow C$, and similarly for divisor classes.
 
\begin{theorem} \label{hilbert polnomial of line bundle}
Let $D=a\widetilde{h}-be$ be a divisor on $\widetilde{Y}=\widetilde{\mathbb{P}}^{3}$, where $a,b \in \mathbb{Z}$. Let $D(t)=D+tH_{\widetilde{Y}}$. Then
\begin{eqnarray*}
\chi(\widetilde{Y},\mathcal{O}(D(t)))&=&\ \ \frac{1}{6}[62-8d+2g]t^{3}\\
&\ &+\frac{1}{6}[(48-3d)a+(6g-12d-6)b-12d+3g+93]t^{2}\\
&\ &+\frac{1}{6}[12a^{2}+(6g-6)b^{2}-6dab+(48-3d)a+(6g-6-12d)b\\
&\ &\ \ \ \ \ +43-4d+g]t\\
&\ &+\frac{1}{6}[a^{3}+(4d+2g-2)b^{3}+6a^{2}+(3g-3da-3)b^{2}+11a\\
&\ &\ \ \ \ \ +(g-3da-4d-1)b+6].
\end{eqnarray*}
\end{theorem}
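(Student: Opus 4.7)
The plan is a direct application of Hirzebruch--Riemann--Roch on the smooth projective threefold $\widetilde{Y}$. For a line bundle $L$ on a smooth projective threefold, HRR gives
\[
\chi(L) \;=\; \frac{\alpha^{3}}{6} + \frac{\alpha^{2} c_{1}}{4} + \frac{\alpha(c_{1}^{2}+c_{2})}{12} + \frac{c_{1}c_{2}}{24},
\]
with $\alpha = c_{1}(L)$ and $c_{i}=c_{i}(T_{\widetilde{Y}})$. Since the canonical class of a blow-up along a smooth curve of codimension $2$ satisfies $K_{\widetilde{Y}} = f^{*}K_{\mathbb{P}^{3}} + e = -4\widetilde{h}+e$, the anticanonical class (which polarises this Fano 3-fold, so equals $H_{\widetilde{Y}}$) is $4\widetilde{h}-e$. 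Thus the divisor in the statement is $D(t) = (a+4t)\widetilde{h}-(b+t)e$ and $c_{1}(T_{\widetilde{Y}})=4\widetilde{h}-e$.

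The first step is to tabulate the four intersection numbers of $\widetilde{h}$ and $e$. Using $f_{*}e=0$, the projection formula immediately gives $\widetilde{h}^{3}=1$ and $\widetilde{h}^{2}e=0$. Restricting to $E = \mathbb{P}(N_{C/\mathbb{P}^{3}})$ and pushing down by $g:E\to C$ gives $\widetilde{h}\cdot e^{2}=-\deg(h|_{C})=-d$. For $e^{3}$, I would use the Grothendieck relation on $E$ together with the normal bundle sequence $0\to T_{C}\to T_{\mathbb{P}^{3}}|_{C}\to N_{C/\mathbb{P}^{3}}\to 0$; the latter gives $\deg c_{1}(N_{C/\mathbb{P}^{3}})=4d+2g-2$, which together with $e|_{E}=\mathcal{O}_{\mathbb{P}(N)}(-1)$ yields $e^{3} = 2-2g-4d$.

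The second step computes the two quantities $\widetilde{h}\cdot c_{2}$ and $e\cdot c_{2}$ needed in HRR. The blow-up formula $Rf_{*}\mathcal{O}_{\widetilde{Y}} = \mathcal{O}_{\mathbb{P}^{3}}$ together with the Leray spectral sequence shows $\chi(\mathcal{O}_{\widetilde{Y}})=\chi(\mathcal{O}_{\mathbb{P}^{3}})=1$ and $\chi(\mathcal{O}_{\widetilde{Y}}(\widetilde{h}))=\chi(\mathcal{O}_{\mathbb{P}^{3}}(1))=4$. Noether's formula for threefolds, $\chi(\mathcal{O})=c_{1}c_{2}/24$, then gives $c_{1}c_{2}=24$, while applying HRR to $\mathcal{O}_{\widetilde{Y}}(\widetilde{h})$ with the intersection numbers from the previous paragraph produces $\widetilde{h}\cdot c_{2}=6+d$. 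Since $c_{1}=4\widetilde{h}-e$, we conclude $e\cdot c_{2}=4\widetilde{h}\cdot c_{2}-c_{1}c_{2}=4d$.

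Finally, I would substitute $\alpha=(a+4t)\widetilde{h}-(b+t)e$ and $c_{1}=4\widetilde{h}-e$ into the HRR formula, expand each intersection product using the table built above, and collect by powers of $t$. The main obstacle is just the bookkeeping of this polynomial expansion; no further conceptual input is needed. As a sanity check, the leading $t^{3}$-coefficient equals $(4\widetilde{h}-e)^{3}/6 = (62-8d+2g)/6$, matching the claim and confirming the identification $H_{\widetilde{Y}}=4\widetilde{h}-e$; the remaining coefficients fall out of the same mechanical expansion.
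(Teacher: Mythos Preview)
Your argument is correct and follows essentially the same route as the paper: compute the intersection table for $\widetilde{h}$ and $e$, determine $c_{1}(T_{\widetilde{Y}})$ and the relevant products with $c_{2}(T_{\widetilde{Y}})$, plug into Hirzebruch--Riemann--Roch, and expand. The only noteworthy difference is that the paper obtains $c_{2}(T_{\widetilde{Y}})=(6+d)\widetilde{h}^{2}-4\widetilde{h}e$ directly from Fulton's blow-up formula (and the intersection numbers from a lemma of Mori--Mukai), whereas you recover the two products $\widetilde{h}\cdot c_{2}$ and $e\cdot c_{2}$ indirectly by combining Noether's formula $c_{1}c_{2}=24$ with HRR applied to $\mathcal{O}_{\widetilde{Y}}(\widetilde{h})$; both yield the same numbers and the rest of the computation is identical.
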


 \begin{proof}
It is well-known that
	\[K_{\mathbb{P}^{3}}=(-3-1)h=-4h
\]
and
	\[c(T_{\mathbb{P}^{3}})=(1+h)^{1+3}=1+4h+6h^{2}+4h^{3}.
\]
So by \cite[Example 15.4.3]{Ful84}, we have
\begin{eqnarray*}
c_{1}(T_{\widetilde{\mathbb{P}}^{3}})&=&f^{*}c_{1}(T_{\mathbb{P}}^{3})+(1-2)[E]\\
&=&f^{*}(4h)-e\\
&=&4\widetilde{h}-e\\
c_{2}(T_{\widetilde{\mathbb{P}}^{3}})&=&f^{*}c_{2}(T_{\mathbb{P}}^{3})+f^{*}i_{*}[C]-f^{*}c_{1}(T_{\mathbb{P}}^{3})[E]\\
&=&f^{*}(6h^{2})+d\widetilde{l}-f^{*}(4h)e\\
&=&(6+d)\widetilde{h}^{2}-4\widetilde{h}e\\
K_{\widetilde{\mathbb{P}}^{3}}&=&f^{*}K_{\mathbb{P}^{3}}+(2-1)[E]\\
&=&f^{*}(-4h)+e\\
&=&-4\widetilde{h}+e.
\end{eqnarray*}
Then using \cite[Lemma 2.1]{SS84}, we obtain
\begin{eqnarray*}
\widetilde{h}^{3}=1
\end{eqnarray*}
\begin{eqnarray*}
e^{3}=-(-K_{\mathbb{P}^{3}} \cdot C)+2-2g=-(4h\cdot C)+2-2g=-4d-2g+2
\end{eqnarray*}
\begin{eqnarray*}
e^{2} \cdot (-K_{\widetilde{\mathbb{P}}^{3}})=2g-2&\Rightarrow& e^{2}(4\widetilde{h}-e)=2g-2\\
&\Rightarrow&4\widetilde{h}e^{2}-e^{3}=2g-2\\
&\Rightarrow&4\widetilde{h}e^{2}-(-4d-2g+2)=2g-2\\
&\Rightarrow&\widetilde{h}e^{2}=-d
\end{eqnarray*}
\begin{eqnarray*}
e\cdot (-K_{\widetilde{\mathbb{P}}^{3}})^{2}=(-K_{\mathbb{P}^{3}}\cdot C)+2-2g&\Rightarrow&e(4\widetilde{h}-e)^{2}=(4h \cdot C)+2-2g\\
&\Rightarrow&16\widetilde{h}^{2}e-8\widetilde{h}e^{2}+e^{3}=4d+2-2g\\
&\Rightarrow&16\widetilde{h}^{2}e+8d-4d-2g+2=4d+2-2g\\
&\Rightarrow&\widetilde{h}^{2}e=0.
\end{eqnarray*}
Since $\widetilde{Y}=\widetilde{\mathbb{P}}^{3}$ is non-hyperelliptic Fano,
	\[H_{\widetilde{\mathbb{P}}^{3}}=-K_{\widetilde{\mathbb{P}}^{3}}=4\widetilde{h}-e.
\]
Let $D=a\widetilde{h}-be$ be a divisor class on $\widetilde{Y}$. Then
\begin{eqnarray*}
D(t)=D+tH_{\widetilde{Y}}&=&(a\widetilde{h}-be)+t(4\widetilde{h}-e)\\
&=&(a+4t)\widetilde{h}-(b+t)e.
\end{eqnarray*}
Then, we apply Riemann-Roch theorem for line bundles on 3-folds to obtain

\begin{eqnarray*}
\chi(\widetilde{Y},\mathcal{O}(D(t)))&=&\frac{1}{6}(D(t))^{3}+\frac{1}{4}c_{1}(T_{\widetilde{Y}}) \cdot (D(t))^{2}+\frac{1}{12}(c_{1}^{2}(T_{\widetilde{Y}})+c_{2}(T_{\widetilde{Y}})) \cdot (D(t))\\
&&+\frac{1}{24}c_{1}(T_{\widetilde{Y}}) \cdot c_{2}(T_{\widetilde{Y}}).\\
\end{eqnarray*}
Then we have
\begin{eqnarray*}
\chi(\widetilde{Y},\mathcal{O}(D(t)))&=&\ \ \ \frac{1}{6}[(a+4t)\widetilde{h}-(b+t)e]^{3}+\frac{1}{4}[4\widetilde{h}-e][(a+4t)\widetilde{h}-(b+t)e]^{2}\\
&&+\frac{1}{12}[(4\widetilde{h}-e)^{2}+(6+d)\widetilde{h}^{2}-4\widetilde{h}e][(a+4t)\widetilde{h}-(b+t)e]\\
&\ &+\frac{1}{24}[4\widetilde{h}-e][6\widetilde{h}^{2}+d\widetilde{l}-4\widetilde{h}e]\\
&=&\ \ \frac{1}{6}[(a+4t)^{3}\widetilde{h}^{3}-3(a+4t)^{2}(b+t)\widetilde{h}^{2}e+3(a+4t)(b+t)^{2}\widetilde{h}e^{2}\\
&&\ \ \ \ \ -(b+t)^{3}e^{3}]\\
&\ &+\frac{1}{4}[4\widetilde{h}-e][(a+4t)^{2}\widetilde{h}^{2}-2(a+4t)(b+t)\widetilde{h}e+(b+t)^{2}e^{2}]\\
&\ &+\frac{1}{12}[16\widetilde{h}^{2}-8\widetilde{h}e+e^{2}+(6+d)\widetilde{h}^{2}-4\widetilde{h}e][(a+4t)\widetilde{h}-(b+t)e]\\
&\ &+\frac{1}{24}[4\widetilde{h}-e][(6+d)\widetilde{h}^{2}-4\widetilde{h}e].
\end{eqnarray*}
Then, 
\begin{eqnarray*}
\chi(\widetilde{Y},\mathcal{O}(D(t)))&=&\ \ \frac{1}{6}[(a+4t)^{3}-3d(a+4t)(b+t)^{2}-(-4d-2g+2)(b+t)^{3}]\\
&\ &+\frac{1}{4}[4(a+4t)^{2}-4d(b+t)^{2}-2d(a+4t)(b+t)\\
&\ &\ \ \ \ \ \ -(-4d-2g+2)(b+t)^{2}]\\
&\ &+\frac{1}{12}[22(a+4t)-12d(b+t)-(-4d-2g+2)(b+t)]\\
&\ &+\frac{1}{24}[24+4d-4d].\\
\end{eqnarray*}
Now, by expanding, we obtain
\begin{eqnarray*}
\chi(\widetilde{Y},\mathcal{O}(D(t)))&=&\ \ \frac{1}{6}[a^{3}+12a^{2}t+48at^{2}+64t^{3}-3dab^{2}-6dabt-12db^{2}t\\
&\ &\ \ \ \ -3dat^{2}-24dbt^{2}-12dt^{3}+4db^{3}+12db^{2}t+12dbt^{2}+4dt^{3}\\
&\ &\ \ \ \ +2gb^{3}+6gb^{2}t+6gbt^{2}+2gt^{3}-2b^{3}-6b^{2}t-6bt^{2}-2t^{3}]\\
&&+\frac{1}{4}[4a^{2}+32at+64t^{2}-4db^{2}-8dbt-4dt^{2}-2dab-2dat\\
&\ &\ \ \ \ \ -8dbt-8dt^{2}+4db^{2}+8dbt+4dt^{2}+2gb^{2}+4gbt+2gt^{2}\\
&\ &\ \ \ \ \ -2b^{2}-4bt-2t^{2}]\\
&\ &+\frac{1}{12}[22a+88t-12db-12dt+4db+4dt+2gb+2gt\\
&\ &\ \ \ \ \ \ -2b-2t]\\
&\ &+\frac{1}{24}24.
\end{eqnarray*}
Then, collecting the terms with same powers of $t$
\begin{eqnarray*}
\chi(\widetilde{Y},\mathcal{O}(D(t)))&=&\ \ \frac{1}{24}[256-48d+16d+8g-8]t^{3}\\
&\ &+\frac{1}{24}[192a-12da-96db+48db+24gb-24b+384-24d\\
&\ &\ \ \ \ \ \ \ -48d+24d+12g-12]t^{2}\\
&\ &+\frac{1}{24}[48a^{2}-24dab-48db^{2}+48db^{2}+24gb^{2}-24b^{2}+192a\\
&\ &\ \ \ \ \ \ \ -48db-12da-48db+48db+24gb-24b+176-24d\\
&\ &\ \ \ \ \ \ \ +8d+4g-4]t\\
&\ &+\frac{1}{24}[4a^{3}-12dab^{2}+16db^{3}+8gb^{3}-8b^{3}+24a^{2}-24db^{2}\\
&\ &\ \ \ \ \ \ \ -12dab+24db^{2}+12gb^{2}-12b^{2}+44a-24db\\
&\ &\ \ \ \ \ \ \ +8db+4gb-4b+24].
\end{eqnarray*}
Finally,
\begin{eqnarray*}
\chi(\widetilde{Y},\mathcal{O}(D(t)))&=&\ \ \frac{1}{6}[62-8d+2g]t^{3}\\
&\ &+\frac{1}{6}[(48-3d)a+(6g-12d-6)b-12d+3g+93]t^{2}\\
&\ &+\frac{1}{6}[12a^{2}+(6g-6)b^{2}-6dab+(48-3d)a+(6g-6-12d)b\\
&\ &\ \ \ \ \ \ \ +43-4d+g]t\\
&\ &+\frac{1}{6}[a^{3}+(4d+2g-2)b^{3}+6a^{2}+(3g-3da-3)b^{2}+11a\\
&\ &\ \ \ \ \ \ \ +(g-3da-4d-1)b+6].
\end{eqnarray*}

 \end{proof}

\begin{theorem} \label{hilbert polynomial of line bundle on Q}
Let $D=a\widetilde{h}-be$ be a divisor on $\widetilde{Y}=\widetilde{Q}$, where $a,b \in \mathbb{Z}$. Let $D(t)=D+tH_{\widetilde{Q}}$. Then
\begin{eqnarray*}
\chi(\widetilde{Y},\mathcal{O}(D(t)))&=&\ \ \frac{1}{24}[208-24d+8g]t^{3}\\
&\ &+\frac{1}{24}[(216-12d)a+(24g-36d-24)b-36d+12g+312]t^{2}\\
&\ &+\frac{1}{24}[72a^{2}+(24g-24)b^{2}-24dab+(216-12d)a\\
&\ &\ \ \ \ \ \ +(24g-24-36d)b+152-6d+4g]t\\
&\ &+\frac{1}{24}[8a^{3}+(12d+8g-8)b^{3}+36a^{2}+(12g-12da-12)b^{2}\\
&\ &\ \ \ \ \ \ +52a+(4g-12da-12d-4)b+24+3d].
\end{eqnarray*}
\end{theorem}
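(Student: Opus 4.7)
The plan is to mirror the proof of Theorem~\ref{hilbert polnomial of line bundle} step for step, replacing $\mathbb{P}^{3}$ by $Q$ throughout. First I would compute the Chern classes of $T_{Q}$: from the normal bundle sequence of the degree two hypersurface $Q\subset\mathbb{P}^{4}$, the restriction $c(T_{\mathbb{P}^{4}})|_{Q}=(1+h)^{5}|_{Q}$, and the relation $h^{4}=0$ on $Q$, one obtains $K_{Q}=-3h$, $c_{1}(T_{Q})=3h$ and $c_{2}(T_{Q})=4h^{2}$.

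Next, \cite[Example 15.4.3]{Ful84} transfers these upstairs to give
\[
c_{1}(T_{\widetilde{Q}})=3\widetilde{h}-e, \qquad c_{2}(T_{\widetilde{Q}})=f^{*}(4h^{2})+d\widetilde{l}-3\widetilde{h}e, \qquad K_{\widetilde{Q}}=-3\widetilde{h}+e.
\]
The four basic intersection numbers on $\widetilde{Q}$ come out as follows. Since $\deg Q=2$ we have $\widetilde{h}^{3}=2$, while \cite[Lemma 2.1]{SS84} together with $-K_{Q}\cdot C=3d$ and the standard identities $e^{2}\cdot(-K_{\widetilde{Q}})=2g-2$, $e\cdot(-K_{\widetilde{Q}})^{2}=(-K_{Q}\cdot C)+2-2g$ produces $e^{3}=-3d-2g+2$, $\widetilde{h}e^{2}=-d$ and $\widetilde{h}^{2}e=0$, in exact parallel with the $\mathbb{P}^{3}$ computation.

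Since $\widetilde{Q}$ is a non-hyperelliptic Fano threefold of index one, $H_{\widetilde{Q}}=-K_{\widetilde{Q}}=3\widetilde{h}-e$, so $D(t)=(a+3t)\widetilde{h}-(b+t)e$. Substituting this into the Riemann--Roch formula
\[
\chi(\widetilde{Q},\mathcal{O}(D(t)))=\tfrac{1}{6}D(t)^{3}+\tfrac{1}{4}c_{1}(T_{\widetilde{Q}})\,D(t)^{2}+\tfrac{1}{12}\bigl(c_{1}(T_{\widetilde{Q}})^{2}+c_{2}(T_{\widetilde{Q}})\bigr)D(t)+\tfrac{1}{24}c_{1}(T_{\widetilde{Q}})\,c_{2}(T_{\widetilde{Q}}),
\]
expanding each term using the intersection numbers from the previous step, and collecting like powers of $t$, yields the stated polynomial.

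There is no conceptual obstacle; the only delicate part is the bookkeeping in the cubic expansion of $D(t)^{3}$ and the grouping of the many coefficients, precisely as in the preceding proof. A convenient sanity check is that the leading term agrees: $\frac{1}{6}(-K_{\widetilde{Q}})^{3}=\frac{1}{6}(54-9d+3d+2g-2)=\frac{1}{24}(208-24d+8g)$, matching the $t^{3}$ coefficient in the statement.
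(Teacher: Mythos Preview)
Your proposal is correct and follows exactly the approach the paper itself indicates: the paper's proof of this theorem consists solely of the sentence ``It follows same pattern in proof of Theorem~\ref{hilbert polnomial of line bundle} with minor computational changes,'' and what you have written is precisely that pattern with the required changes (replacing $K_{\mathbb{P}^{3}}=-4h$ by $K_{Q}=-3h$, $c_{2}(T_{\mathbb{P}^{3}})=6h^{2}$ by $c_{2}(T_{Q})=4h^{2}$, $\widetilde{h}^{3}=1$ by $\widetilde{h}^{3}=2$, and $H=4\widetilde{h}-e$ by $H=3\widetilde{h}-e$). In fact your outline is more detailed than the paper's own proof; the sanity check on the $t^{3}$ coefficient is a nice addition.
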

 \begin{proof}
It follows same pattern in proof of Theorem \ref{hilbert polnomial of line bundle} with minor computational changes.
 \end{proof}

\begin{theorem} \label{hilbert polynomial of line bundle on V3}
Let $D=a\widetilde{h}-be$ be a divisor on $\widetilde{Y}=\widetilde{V}_{3}$, where $a,b \in \mathbb{Z}$. Let $D(t)=D+tH_{\widetilde{V}_{3}}$. Then
\begin{eqnarray*}
\chi(\widetilde{Y},\mathcal{O}(D(t)))&=&\ \ \frac{1}{12}[44-8d+4g]t^{3}\\
&\ &+\frac{1}{12}[(72-6d)a+(12g-12d-12)b-12d+6g+66]t^{2}\\
&\ &+\frac{1}{12}[36a^{2}+(12g-12)b^{2}-12dab+(72-6d)a\\
&\ &\ \ \ \ \ \ +(12g-12-12d)b+46+2g]t\\
&\ &+\frac{1}{12}[6a^{3}+(4d+4g-4)b^{3}+18a^{2}+(6g-6da-6)b^{2}\\
&\ &\ \ \ \ \ \ +(24+2d)a+(2g-6da-4d-2)b+12+2d].
\end{eqnarray*}
\end{theorem}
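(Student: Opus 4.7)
The approach is to follow the same template as in the proof of Theorem \ref{hilbert polnomial of line bundle}, adapting the numerical inputs to the cubic threefold $V_{3}\subset\mathbb{P}^{4}$. The four ingredients I need, in order, are: the Chern classes of $T_{V_{3}}$, the Chern classes of $T_{\widetilde{V}_{3}}$, the intersection numbers on $\widetilde{V}_{3}$, and Riemann--Roch.

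First, by adjunction $K_{V_{3}}=-2h$, and from the normal bundle sequence $0\to T_{V_{3}}\to T_{\mathbb{P}^{4}}|_{V_{3}}\to\mathcal{O}_{V_{3}}(3h)\to 0$ together with $c(T_{\mathbb{P}^{4}})=(1+h)^{5}$, one computes
\begin{equation*}
c_{1}(T_{V_{3}})=2h,\qquad c_{2}(T_{V_{3}})=4h^{2},\qquad h^{3}=3.
\end{equation*}
Applying \cite[Example 15.4.3]{Ful84} to $f:\widetilde{V}_{3}\to V_{3}$ exactly as in the $\widetilde{\mathbb{P}}^{3}$ case then gives
\begin{align*}
c_{1}(T_{\widetilde{V}_{3}}) &= 2\widetilde{h}-e, \\
c_{2}(T_{\widetilde{V}_{3}}) &= (4+d)\widetilde{h}^{2}-2\widetilde{h}e, \\
K_{\widetilde{V}_{3}} &= -2\widetilde{h}+e.
\end{align*}
Since $\widetilde{V}_{3}$ is a non-hyperelliptic Fano threefold, $H_{\widetilde{V}_{3}}=-K_{\widetilde{V}_{3}}=2\widetilde{h}-e$, and consequently $D(t)=(a+2t)\widetilde{h}-(b+t)e$.

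Next, the intersection numbers on $\widetilde{V}_{3}$ follow from \cite[Lemma 2.1]{SS84} together with $-K_{V_{3}}\cdot C=2d$ and $h\cdot C=d$:
\begin{equation*}
\widetilde{h}^{3}=3,\qquad e^{3}=-2d+2-2g,\qquad \widetilde{h}e^{2}=-d,\qquad \widetilde{h}^{2}e=0.
\end{equation*}
The derivation is identical in form to that of the $\widetilde{\mathbb{P}}^{3}$ case: solve successively for $e^{3}$, $\widetilde{h}e^{2}$, and $\widetilde{h}^{2}e$ from the adjunction-type relations $e^{3}=K_{V_{3}}\cdot C+2-2g$, $e^{2}\cdot(-K_{\widetilde{V}_{3}})=2g-2$, and $e\cdot(-K_{\widetilde{V}_{3}})^{2}=-K_{V_{3}}\cdot C+2-2g$.

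Finally, I substitute all the data into Riemann--Roch for line bundles on a smooth threefold,
\begin{equation*}
\chi\bigl(\widetilde{V}_{3},\mathcal{O}(D(t))\bigr)=\tfrac{1}{6}D(t)^{3}+\tfrac{1}{4}c_{1}(T_{\widetilde{V}_{3}})\,D(t)^{2}+\tfrac{1}{12}\bigl(c_{1}^{2}(T_{\widetilde{V}_{3}})+c_{2}(T_{\widetilde{V}_{3}})\bigr)D(t)+\tfrac{1}{24}c_{1}(T_{\widetilde{V}_{3}})\,c_{2}(T_{\widetilde{V}_{3}}),
\end{equation*}
expand the cubic and quadratic powers of $D(t)=(a+2t)\widetilde{h}-(b+t)e$ using the intersection table above, and collect coefficients of $t^{3}$, $t^{2}$, $t$, and the constant term. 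The only genuine difficulty is the purely algebraic bookkeeping, as the expansion produces many monomials in $a,b,t,d,g$ that must be combined correctly; the structural steps are identical to the $\widetilde{\mathbb{P}}^{3}$ computation, and the only changes are the different canonical class, degree, and Chern classes of $V_{3}$ relative to $\mathbb{P}^{3}$. One can sanity-check the answer against the leading $t^{3}$ coefficient: $H_{\widetilde{V}_{3}}^{3}=(2\widetilde{h}-e)^{3}=8\cdot 3-0+6(-d)-(-2d+2-2g)=22-4d+2g$, and $\tfrac{1}{6}(22-4d+2g)=\tfrac{1}{12}(44-8d+4g)$, matching the stated coefficient of $t^{3}$.
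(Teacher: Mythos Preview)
Your proposal is precisely what the paper intends: its proof reads in full ``It follows same pattern in proof of Theorem \ref{hilbert polnomial of line bundle} with minor computational changes,'' and you have carried out exactly those changes---adjunction and the normal bundle sequence for $V_{3}$, Fulton's blow-up formula for the Chern classes of $T_{\widetilde{V}_{3}}$, the intersection table via \cite[Lemma 2.1]{SS84}, and then Riemann--Roch expanded and collected in powers of $t$. The structure, the inputs, and even the sanity check on the leading coefficient all match the paper's template line for line.
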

 \begin{proof}
It follows same pattern in proof of Theorem \ref{hilbert polnomial of line bundle} with minor computational changes.
 \end{proof}

\begin{theorem} \label{hilbert polynomial of line bundle on V4}
Let $D=a\widetilde{h}-be$ be a divisor on $\widetilde{Y}=\widetilde{V}_{4}$, where $a,b \in \mathbb{Z}$. Let $D(t)=D+tH_{\widetilde{V}_{4}}$. Then
\begin{eqnarray*}
\chi(\widetilde{Y},\mathcal{O}(D(t)))&=&\ \ \frac{1}{12}[60-8d+4g]t^{3}\\
&\ &+\frac{1}{12}[(96-6d)a+(12g-12d-12)b-12d+6g+90]t^{2}\\
&\ &+\frac{1}{12}[48a^{2}+(12g-12)b^{2}-12dab+(96-6d)a\\
&\ &\ \ \ \ \ \ +(12g-12-12d)b+54+2g+2d]t\\
&\ &+\frac{1}{12}[8a^{3}+(4d+4g-4)b^{3}+24a^{2}+(6g-6da-6)b^{2}\\
&\ &\ \ \ \ \ \ +(28+3d)a+(2g-6da-4d-2)b+12+3d].
\end{eqnarray*}
\end{theorem}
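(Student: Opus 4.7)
The plan is to mirror verbatim the method used in Theorem \ref{hilbert polnomial of line bundle} for $\widetilde{\mathbb{P}}^3$, substituting the geometric data of $V_4$. First I would record the numerical invariants of $V_4$ as a complete intersection of two quadrics in $\mathbb{P}^5$: from the Euler sequence and the conormal sequence we have $c(T_{V_4})=(1+h)^6/(1+2h)^2$, whose expansion yields $c_1(T_{V_4})=2h$ and $c_2(T_{V_4})=3h^2$, and $K_{V_4}=(2+2-5-1)h=-2h$. Since $\widetilde{V}_4$ is a non-hyperelliptic Fano $3$-fold, $H_{\widetilde{V}_4}=-K_{\widetilde{V}_4}=2\widetilde{h}-e$.

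Next, using \cite[Example 15.4.3]{Ful84} in exactly the same way as in the proof of Theorem \ref{hilbert polnomial of line bundle}, I would transfer these classes to $\widetilde{V}_4$:
\begin{eqnarray*}
c_1(T_{\widetilde{V}_4}) &=& f^*(2h)-e \;=\; 2\widetilde{h}-e,\\
c_2(T_{\widetilde{V}_4}) &=& f^*(3h^2)+d\widetilde{l}-f^*(2h)\cdot e \;=\; (3+d)\widetilde{h}^2-2\widetilde{h}\,e,\\
K_{\widetilde{V}_4} &=& -2\widetilde{h}+e.
\end{eqnarray*}
Then I would invoke \cite[Lemma 2.1]{SS84} to compute the four intersection numbers on $\widetilde{V}_4$. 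Since $\widetilde{h}^3=h^3=\deg V_4=4$, $-K_{V_4}\cdot C=2d$, and $e^2\cdot(-K_{\widetilde{V}_4})=2g-2$, $e\cdot(-K_{\widetilde{V}_4})^2=2d+2-2g$, the same manipulations as before give
\[
\widetilde{h}^3=4,\qquad \widetilde{h}^2e=0,\qquad \widetilde{h}\,e^2=-d,\qquad e^3=-2d-2g+2.
\]

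With these in hand, for a divisor $D=a\widetilde{h}-be$ I would set $D(t)=D+tH_{\widetilde{V}_4}=(a+2t)\widetilde{h}-(b+t)e$ and plug into the Riemann-Roch formula for line bundles on threefolds,
\[
\chi\bigl(\widetilde{V}_4,\mathcal{O}(D(t))\bigr)=\tfrac{1}{6}D(t)^3+\tfrac{1}{4}c_1(T_{\widetilde{V}_4})\cdot D(t)^2+\tfrac{1}{12}\bigl(c_1^2+c_2\bigr)\cdot D(t)+\tfrac{1}{24}c_1\cdot c_2.
\]
A straightforward expansion, using the four intersection numbers to reduce each monomial in $\widetilde{h}$ and $e$, and finally collecting coefficients of $t^3,t^2,t,1$, will yield the stated polynomial. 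As a sanity check one can verify the leading coefficient: $H_{\widetilde{V}_4}^3=8\cdot 4-12\cdot 0+6(-d)-(-2d-2g+2)=30-4d+2g$, so $\tfrac{1}{6}H^3=\tfrac{1}{12}(60-8d+4g)$, matching the claim.

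The only nontrivial step is the algebraic bookkeeping in the expansion; there are no new conceptual ingredients beyond those used in Theorem \ref{hilbert polnomial of line bundle}. The main source of potential error is tracking the many cross terms of the form $a^ib^j t^k$ correctly, but since the structure is identical to the $\mathbb{P}^3$ case and the leading coefficient check matches, the remaining routine expansion will complete the proof.
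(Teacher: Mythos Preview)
Your proposal is precisely the paper's approach: the paper's own proof is the single sentence that it follows the same pattern as Theorem~\ref{hilbert polnomial of line bundle} with minor computational changes, and you have spelled out exactly those changes. The invariants of $V_4$, the blow-up formulas for $c_1$ and $c_2$, the intersection numbers via \cite[Lemma~2.1]{SS84}, and the Riemann--Roch expansion are all set up just as in the $\mathbb{P}^3$ case.
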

 \begin{proof}
It follows same pattern in proof of Theorem \ref{hilbert polnomial of line bundle} with minor computational changes.
 \end{proof}

\begin{theorem}[Leray Spectral Sequence]\label{main Leray spectral sequence}
Suppose $\pi : X_{1}\rightarrow X_{2}$ is a morphism of vaieties. Then for any $\mathcal{O}_{X_{1}}$-module $\mathcal{F}$, there is a spectral sequence with $E_{2}$ term given by $H^{p}(X_{2},R^{q}\pi_{*}\mathcal{F})$ abutting to $H^{p+q}(X_{1},\mathcal{F})$.
\end{theorem}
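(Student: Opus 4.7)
The plan is to derive the statement as an instance of Grothendieck's spectral sequence for the composition of two left exact functors. The key structural observation is that the global sections functor on $X_{1}$ factors through pushforward: for any $\mathcal{O}_{X_{1}}$-module $\mathcal{G}$ one has $\Gamma(X_{1},\mathcal{G}) = \Gamma(X_{2},\pi_{*}\mathcal{G})$ directly from the definition of $\pi_{*}$ evaluated at the open set $X_{2}\subseteq X_{2}$. Both $\pi_{*}$ and $\Gamma(X_{2},-)$ are left exact functors between abelian categories with enough injectives, so the Grothendieck composite-functor machine is in principle applicable.

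To actually invoke Grothendieck's theorem I need to verify the hypothesis that $\pi_{*}$ sends injective $\mathcal{O}_{X_{1}}$-modules to $\Gamma(X_{2},-)$-acyclic sheaves. The standard route is through flasqueness: an injective $\mathcal{O}_{X_{1}}$-module is flasque (this is the usual extension-by-zero argument), the pushforward of a flasque sheaf under any continuous map is flasque (restriction maps stay surjective on preimages of opens), and flasque sheaves have vanishing higher sheaf cohomology on $X_{2}$. This is the only input that is not purely formal.

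With the hypothesis in hand, Grothendieck's theorem produces a first-quadrant spectral sequence whose $E_{2}$-page is
\[
E_{2}^{p,q} = \bigl(R^{p}\Gamma(X_{2},-)\bigr)\bigl(R^{q}\pi_{*}\mathcal{F}\bigr) = H^{p}(X_{2},R^{q}\pi_{*}\mathcal{F}),
\]
converging to the right derived functors of the composition applied to $\mathcal{F}$, which by the factorization above are $H^{p+q}(X_{1},\mathcal{F})$. This is exactly the asserted spectral sequence.

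The \emph{hard part} here is essentially nothing: this is a classical theorem, and the only delicate point is to ensure that injective resolutions in the category of $\mathcal{O}_{X_{1}}$-modules (rather than of abelian sheaves) still consist of flasque sheaves, so that the acyclicity check survives. Since this paper uses the result only as a computational tool, I would be content to cite it from a standard reference (e.g.\ Hartshorne's \emph{Algebraic Geometry}, Chapter III) rather than reproduce the derivation of Grothendieck's spectral sequence in detail.
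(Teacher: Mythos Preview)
Your argument is the standard and correct derivation via Grothendieck's composite-functor spectral sequence, and the acyclicity check through flasqueness is the right verification. The paper itself offers no proof of this theorem: it is stated as a classical result without argument or citation, and is used only to extract the corollary that follows. Your closing remark that one could simply cite a standard reference is, in effect, precisely what the paper does.
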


\begin{corollary} \label{Leray spectral sequence}
Let $L$ be a line bundle on $\widetilde{Y}$ and $p+q=k$. Then
\begin{itemize}
	\item $H^{k}(\widetilde{Y},L)=0$ if $H^{p}(Y,R^{q}f_{*}L)=0$ for all possible $p$ and $q$
	\item $H^{k}(\widetilde{Y},L) \cong H^{r}(Y,R^{s}f_{*}L)$ if $H^{p}(Y,R^{q}f_{*}L)=0$ except the tuple $(p,q)=(r,s)$.
\end{itemize}
\end{corollary}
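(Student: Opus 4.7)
The plan is to apply Theorem~\ref{main Leray spectral sequence} directly to the blow-down morphism $f:\widetilde{Y}\to Y$ with $\mathcal{F}=L$, and to read off both bullets from the general formalism of a convergent first-quadrant spectral sequence. This gives a spectral sequence with $E_2^{p,q}=H^p(Y,R^qf_*L)$ abutting to $H^{p+q}(\widetilde{Y},L)$, where the target carries a finite decreasing filtration whose associated graded pieces are precisely the $E_\infty^{p,q}$ with $p+q=k$.

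For the first bullet, I would invoke the fact that each $E_\infty^{p,q}$ is a subquotient of $E_2^{p,q}$. Under the hypothesis $H^p(Y,R^qf_*L)=0$ for every pair $(p,q)$ with $p+q=k$, all such $E_\infty^{p,q}$ must vanish, so every graded piece of the filtration on $H^k(\widetilde{Y},L)$ is zero, which forces $H^k(\widetilde{Y},L)=0$.

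For the second bullet, the assumption that $E_2^{p,q}=0$ for every $(p,q)\neq(r,s)$ forces each differential on every page to have either a zero source or a zero target, so the spectral sequence degenerates at $E_2$ and $E_\infty^{r,s}=E_2^{r,s}$. Consequently the filtration on $H^{r+s}(\widetilde{Y},L)$ has exactly one nonzero graded piece, yielding $H^{r+s}(\widetilde{Y},L)\cong H^r(Y,R^sf_*L)$. The corollary is a purely formal consequence of convergence of the Leray spectral sequence, so there is no substantive obstacle beyond unwinding the indexing conventions.
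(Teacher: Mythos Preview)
Your proposal is correct and follows the same approach as the paper, which simply states that the corollary is a direct consequence of Theorem~\ref{main Leray spectral sequence}; you have merely spelled out the standard spectral-sequence argument in more detail than the paper does.
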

\begin{proof}
It is a direct consequence of Theorem \ref{main Leray spectral sequence}. 
 \end{proof}

\subsection{Ulrich Bundles}

The general references for this section are \cite{CKM13} and \cite{HL10}. 
\begin{definition}
Let $\mathcal{E}$ be a vector bundle on a nonsingular projective variety $X$. Then $\mathcal{E}$ is said to be $semistable$ if for every nonzero subbundle $\mathcal{F}$ of $\mathcal{E}$ we have the inequality 
	\[\frac{P_{\mathcal{F}}}{rank(\mathcal{F})} \leq \frac{P_{\mathcal{E}}}{rank(\mathcal{E})},
\]
where $P_{\mathcal{F}}$ and $P_{\mathcal{E}}$ are the respective Hilbert polynomials and comparison is based on the lexicographic order. It is $stable$ if one always has strict inequality above. 
\end{definition}

\begin{definition}
Let $\mathcal{E}$ be a vector bundle on a nonsingular projective variety $X$. The $slope$ $\mu(\mathcal{E})$ of $\mathcal{E}$ is defined as $deg(\mathcal{E})/rank(\mathcal{E})$. We say that $\mathcal{E}$ is $\mu$-$semistable$ if for every subbundle $\mathcal{F}$ of $\mathcal{E}$ with $0<rank(\mathcal{F})<rank(\mathcal{E})$, we have $\mu(\mathcal{F})\leq \mu(\mathcal{E})$. We say $\mathcal{E}$ is $\mu$-$stable$ if strict inequality always holds above.
\end{definition}

\begin{lemma} \label{stability lemma}
The two definitions are related as follows:
\begin{equation*}
	\mu-stable \Rightarrow stable \Rightarrow semistable \Rightarrow \mu-semistable.
\end{equation*}
\end{lemma}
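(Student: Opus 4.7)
The plan is to reduce all three implications to a single comparison of the top two coefficients of the normalized Hilbert polynomial $P_{\mathcal{F}}/\mathrm{rk}(\mathcal{F})$. By Hirzebruch--Riemann--Roch, for a torsion free sheaf $\mathcal{F}$ of rank $r$ on a smooth projective $n$-fold $X$ polarized by $H$, the Hilbert polynomial has the form
\[
P_{\mathcal{F}}(t)=\frac{r\,H^{n}}{n!}\,t^{n}+\frac{1}{(n-1)!}\left(\deg(\mathcal{F})-\tfrac{r}{2}\,K_{X}\!\cdot\! H^{n-1}\right)t^{n-1}+\text{(lower order)},
\]
where $\deg(\mathcal{F})=c_{1}(\mathcal{F})\!\cdot\! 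H^{n-1}$. Dividing by $r$ I would note that the leading coefficient $H^{n}/n!$ is independent of $\mathcal{F}$, and the next coefficient equals $\mu(\mathcal{F})/(n-1)!$ up to a term $-\tfrac{1}{2(n-1)!}K_{X}\!\cdot\! H^{n-1}$ that also does not depend on $\mathcal{F}$. Thus, in the lexicographic order on polynomials, comparing $P_{\mathcal{F}}/r$ with $P_{\mathcal{E}}/\mathrm{rk}(\mathcal{E})$ at the first two coefficients is exactly the same as comparing the slopes $\mu(\mathcal{F})$ with $\mu(\mathcal{E})$.

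With that translation in hand I would establish the chain of implications as follows. First, the implications $\mu$-stable $\Rightarrow$ $\mu$-semistable and stable $\Rightarrow$ semistable are tautological, since a strict inequality forces the corresponding non-strict one. For $\mu$-stable $\Rightarrow$ stable, suppose $\mathcal{F}\subset\mathcal{E}$ is a nonzero subbundle with $0<\mathrm{rk}(\mathcal{F})<\mathrm{rk}(\mathcal{E})$; then by $\mu$-stability $\mu(\mathcal{F})<\mu(\mathcal{E})$, which by the computation above means the first coefficient of $P_{\mathcal{F}}/\mathrm{rk}(\mathcal{F})$ and $P_{\mathcal{E}}/\mathrm{rk}(\mathcal{E})$ agree while the second coefficient is strictly smaller on the left, giving $P_{\mathcal{F}}/\mathrm{rk}(\mathcal{F})<P_{\mathcal{E}}/\mathrm{rk}(\mathcal{E})$ in lex order, hence stability.

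For the remaining implication, semistable $\Rightarrow$ $\mu$-semistable, take any subbundle $\mathcal{F}$ as above. Semistability gives $P_{\mathcal{F}}/\mathrm{rk}(\mathcal{F})\le P_{\mathcal{E}}/\mathrm{rk}(\mathcal{E})$ in lex order; since the leading coefficients are equal, this forces the second coefficient on the left to be $\le$ the one on the right, which by the Riemann--Roch expansion is precisely $\mu(\mathcal{F})\le\mu(\mathcal{E})$. The only real content is the Hirzebruch--Riemann--Roch expansion and the observation that the correction term involving $K_{X}\!\cdot\! H^{n-1}$ cancels when comparing normalized polynomials, which is the step I would state carefully; everything else is formal. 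The main (very mild) obstacle is just being explicit about lexicographic comparison when the leading coefficients coincide, which forces attention to drop to the $t^{n-1}$ coefficient, where slope enters directly.
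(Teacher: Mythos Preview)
Your argument is correct and is exactly the standard proof (via the Hirzebruch--Riemann--Roch expansion of the top two coefficients of the reduced Hilbert polynomial) that appears in \cite[1.2.13]{HL10}, which is precisely what the paper cites in place of a proof. The paper gives no argument of its own beyond that reference, so your write-up is simply the content behind the citation spelled out.
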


 \begin{proof}
 See \cite[1.2.13]{HL10}.
 \end{proof}

\begin{definition}
A vector bundle $\mathcal{E}$ on $X$ is called ACM (arithmetically Cohen-Macaulay) if $H^{i}(\mathcal{E}(t))=0$ for all $t \in \mathbb{Z}$ and $0<i<k$.
\end{definition}

\begin{definition}
Let $\mathcal{E}$ be a vector bundle of rank $r$ on $X$. Then $\mathcal{E}$ is \textbf{Ulrich} if for some linear projection $\pi: X\rightarrow \mathbb{P}^{k}$ we have $\pi_{*}\mathcal{E}\cong\mathcal{O}^{cr}_{\mathbb{P}^{k}}$.
\end{definition}

\begin{proposition}\label{hilbert polynoial condition}
Let $\mathcal{E}$ be a vector bundle of rank $r$ on $X$. Then $\mathcal{E}$ is Ulrich if and only if it is ACM with Hilbert polynomial $cr\binom{t+k}{k}$.
\end{proposition}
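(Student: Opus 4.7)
My plan is to exploit the fact that a linear projection $\pi: X \to \mathbb{P}^k$ is a finite morphism of degree $c$, and (by miracle flatness) is automatically flat, so that the Leray spectral sequence (Corollary~\ref{Leray spectral sequence}) collapses to natural isomorphisms $H^i(X,\mathcal{E}(t)) \cong H^i(\mathbb{P}^k, \pi_*\mathcal{E}(t))$, and then to combine this with Horrocks' splitting criterion on $\mathbb{P}^k$.

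For the forward implication I would start from $\pi_*\mathcal{E} \cong \mathcal{O}_{\mathbb{P}^k}^{cr}$ and use $\mathcal{O}_X(H_X) = \pi^*\mathcal{O}_{\mathbb{P}^k}(1)$ together with the projection formula to obtain $\pi_*\mathcal{E}(t) \cong \mathcal{O}_{\mathbb{P}^k}(t)^{\oplus cr}$ for every $t \in \mathbb{Z}$. The standard vanishings $H^i(\mathbb{P}^k, \mathcal{O}(t)) = 0$ for $0 < i < k$ then yield the ACM property immediately, while taking Euler characteristics produces the Hilbert polynomial $cr\binom{t+k}{k}$.

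For the reverse implication I would pick a generic linear projection, so that $\pi_*\mathcal{E}$ is a locally free sheaf of rank $cr$ on $\mathbb{P}^k$. The same finiteness and projection-formula argument transfers the ACM property from $\mathcal{E}$ to $\pi_*\mathcal{E}$, and Horrocks' splitting criterion then forces a decomposition $\pi_*\mathcal{E} \cong \bigoplus_{j=1}^{cr} \mathcal{O}_{\mathbb{P}^k}(a_j)$. What remains is to pin down $a_j = 0$ for every $j$, which recovers the defining condition of an Ulrich bundle.

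The crux of the argument is purely numerical. Matching the identity $\sum_j \binom{t+a_j+k}{k} = cr\binom{t+k}{k}$ coefficient by coefficient, the coefficient of $t^{k-1}$ yields $\sum_j a_j = 0$, after which the coefficient of $t^{k-2}$ collapses to $\sum_j a_j^2 = 0$, which (for $k \ge 2$) forces $a_j = 0$ for every $j$. I expect this combinatorial identification to be the main point requiring care — everything else is a routine application of the projection formula, finite-morphism cohomology vanishing, and Horrocks' theorem; the argument relies on $k \ge 2$, which is fine for the Fano threefolds considered in this paper.
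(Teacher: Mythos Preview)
Your argument is correct for $k\ge 2$, and the coefficient computation is sound: writing $\prod_{i=1}^{k}(t+a+i)$ as a polynomial in $u=t+a$, the $t^{k-1}$-coefficient is $ka+e_1(1,\dots,k)$ and the $t^{k-2}$-coefficient is $\binom{k}{2}a^{2}+(k-1)e_1\,a+e_2$, so summing over the $a_j$ and using $\sum a_j=0$ indeed forces $\sum a_j^{2}=0$. Your caveat about $k\ge 2$ is genuine: for $k=1$ the ACM condition is vacuous and the Hilbert-polynomial identity only yields $\sum a_j=0$, which does not pin down the $a_j$ (e.g.\ $\mathcal{O}_{\mathbb{P}^1}(1)\oplus\mathcal{O}_{\mathbb{P}^1}(-1)$ has the same Hilbert polynomial as $\mathcal{O}_{\mathbb{P}^1}^{2}$); this is harmless for the threefolds treated here.

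By way of comparison, the paper does not actually prove this proposition: it simply cites \cite[Proposition 2.3]{CKM13}. What you have written is essentially the standard proof one finds behind that citation --- finite flat projection, projection formula, Horrocks' criterion, and a numerical identification of the twists. So rather than a different route, you have supplied the argument the paper outsources; the only thing to be aware of is that the cited reference handles the statement in full generality (including the normalisation conventions and the $k=1$ case via the vanishing $H^{0}(\mathcal{E}(-1))=H^{k}(\mathcal{E}(-k))=0$), whereas your coefficient-matching shortcut is tailored to $k\ge 2$.
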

 \begin{proof}
See \cite[Proposition 2.3]{CKM13}.
 \end{proof}

\begin{theorem}\label{possible Ulrich line bundles on Y}
Let $\widetilde{Y}$ be one of the following Fano 3-folds:
\begin{enumerate}
	\item the blow-up of $\mathbb{P}^{3}$ along an intersection of two cubics,
	\item the blow-up of $\mathbb{P}^{3}$ along a curve of degree 7 and genus 5 which is an intersection of cubics,
	\item the blow-up of $\mathbb{P}^{3}$ along a curve of degree 6 and genus 3 which is an intersection of cubics,
	\item the blow-up of $\mathbb{P}^{3}$ along the intersection of a quadric and a cubic,
	\item the blow-up of $\mathbb{P}^{3}$ along an elliptic curve which is an intersection of two quadrics,
	\item the blow-up of $\mathbb{P}^{3}$ along a twisted cubic,
	\item the blow-up of $\mathbb{P}^{3}$ along a plane cubic,
	\item the blow-up of $\mathbb{P}^{3}$ along a conic,
	\item the blow-up of $\mathbb{P}^{3}$ along a line.
\end{enumerate}
Then Ulrich line bundles can exist only on the class (3).
\end{theorem}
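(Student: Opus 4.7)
The plan is to use Proposition~\ref{hilbert polynoial condition}: a line bundle $L = \mathcal{O}(a\widetilde{h} - be)$ on $\widetilde{Y} = \widetilde{\mathbb{P}}^{3}$ is Ulrich if and only if it is ACM and
\[
\chi(\widetilde{Y}, L(t)) = c\binom{t+3}{3}, \qquad c = H_{\widetilde{Y}}^{3} = 62 - 8d + 2g,
\]
where the value of $c$ is read off from the intersection numbers computed inside the proof of Theorem~\ref{hilbert polnomial of line bundle}. I would equate this target polynomial with the explicit expression for $\chi(\widetilde{Y},\mathcal{O}(D(t)))$ given by Theorem~\ref{hilbert polnomial of line bundle}: the $t^{3}$ coefficients agree automatically, while the coefficients of $t^{2}$, $t$, and $1$ produce three Diophantine equations in the integer unknowns $(a,b)$ whose coefficients depend on $(d,g)$. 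An Ulrich line bundle on a given class can exist only if these equations have a simultaneous integer solution, so ruling them out case-by-case yields the theorem.

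The equation coming from the $t^{2}$ coefficient simplifies to
\[
(16 - d)a + 2(g - 2d - 1)b = 3(31 - 4d + g),
\]
and this single linear relation already eliminates several classes on its own: for instance, case $(8)$ with $(d,g) = (2,0)$ reduces to $14a - 10b = 69$, which is impossible modulo $2$. In the remaining classes the linear equation pins $a$ to $b$ along an arithmetic progression, after which substitution into the quadratic $t$-coefficient equation produces a one-variable polynomial whose integer roots can be inspected directly; if a candidate $(a,b)$ still survives, the cubic $t^{0}$-coefficient equation is invoked to eliminate it. The only deformation class in which all three relations admit a common integer solution is $(3)$: there $(d,g) = (6,3)$ collapses the linear equation to $a - 2b = 3$, and exactly two pairs $(a,b)$ survive, producing the line bundles $\mathcal{L}_{1}$ and $\mathcal{L}_{2}$ of the subsequent Theorem~\ref{line bundles}.

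The main obstacle is not conceptual but combinatorial bookkeeping: the $t^{0}$-equation is cubic in $(a,b)$ and each of the nine substitutions has to be cross-checked against all three relations. I expect each eliminated case to fail either by a parity or small-modulus obstruction visible already at the linear stage, or by an incompatibility discovered only after substitution into the quadratic or cubic equation. The ACM verification that the two surviving numerical candidates in case $(3)$ really are Ulrich will be handled separately, via the Leray spectral sequence (Corollary~\ref{Leray spectral sequence}) together with the projection formula, and is the content of Theorem~\ref{line bundles}.
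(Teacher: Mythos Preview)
Your proposal is correct and follows essentially the same approach as the paper: compute $\chi(\mathcal{O}(D(t)))$ via Theorem~\ref{hilbert polnomial of line bundle}, equate it to the target Hilbert polynomial from Proposition~\ref{hilbert polynoial condition}, and check case by case whether the resulting system in $(a,b)$ has integer solutions, deferring the ACM verification for the surviving candidates in case~(3) to Theorem~\ref{line bundles}. The paper works the $t^{2}$- and $t$-coefficient equations explicitly for cases~(1) and~(3) and asserts that the remaining cases ``follow the same pattern with minor computational changes''; your parity observation for case~(8) is exactly the sort of shortcut the paper leaves implicit.
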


\begin{proof}
Let $D=a\widetilde{h}-be$ be a divisor class on $\widetilde{Y}$. We can compute Hilbert polynomial of $\mathcal{O}_{\widetilde{Y}}(D)$ by Theorem \ref{hilbert polnomial of line bundle}. By Proposition \ref{hilbert polynoial condition}, this must be equal to $deg(\widetilde{Y})\binom{t+3}{3}$. We will equate the coefficients of these two polynomials and try to find integer solutions for $a$ and $b$ in each case separately.
\begin{enumerate}
	
	\item Since $C$ is an intersection of two cubics, $d=9$. By the adjunction formula, $g=10$. Then $k=H^{3}=10$. Now, equate the coefficients of $t^{2}$:
	  \[\frac{10.6}{6}t^{2}=\frac{1}{6}[(48-3.9)a+(6.10-12.9-6)b-12.9+3.10+93]t^{2}
	 \]
	which gives
	  \[a=\frac{18b+15}{7}.
	 \]	
	Next, equate the coefficients of $t$ and use the above relation between $a$ and $b$ to get
		\begin{eqnarray*}
	\frac{10.11}{6}t&=&\frac{1}{6}[12(\frac{18b+15}{7})^{2}+(6.10-6)b^{2}-6.9(\frac{18b+15}{7})b\\
	&&\ \ \ +(48-3.9)(\frac{18b+15}{7})+(6.10-6-12.9)b+43-4.9+10]t\\
	  \end{eqnarray*}
		which gives
		\begin{eqnarray*}
	 b=\frac{3}{2}\mp\frac{7}{30}\sqrt{65}.
	  \end{eqnarray*}
	There is no integer solution for $a$ and $b$, so there exists no Ulrich line bundle.\\
	For the other items except (3), proof follows same pattern in proof of item (1) with minor computational changes. 
	
		\item [(3)](This case is \cite[No.12 in Table 12.3]{IP99}.) It is given that $d=6$ and $g=3$. Then $k=H^{3}=20$. Then equate the coefficients of $t^{2}$:
		\[\frac{20.6}{6}t^{2}=\frac{1}{6}[(48-3.6)a+(6.3-12.6-6)b-12.6+3.3+93]t^{2}
		 \]
		 which gives
		\[ a=2b+3.
	   \]
	Next, equate the coefficients of $t$ and use the above relation between $a$ and $b$ to get
		\begin{eqnarray*}
	\frac{20.11}{6}t&=&\frac{1}{6}[12(2b+3)^{2}+(6.3-6)b^{2}-6.6(2b+3)b+(48-3.6)(2b+3)\\
	&&\ \ \ +(6.3-6-12.6)b+43-4.6+3]t\\
	  \end{eqnarray*}
	  which gives
		\begin{eqnarray*}
	b=0 \ or\  b=3.
	  \end{eqnarray*}
	Then we have $(a,b)=(3,0)$ or $(a,b)=(9,3)$. Both of these solutions satisfy also the equality of coefficients of $t^{2}$ and constant terms. So the divisors $3\widetilde{h}$ and $9\widetilde{h}-3e$ yield possible Ulrich line bundles. (We note that to be Ulrich, they must also satisfy the ACM condition.)	
\end{enumerate}
 
 \end{proof}

\begin{theorem}
Let $\widetilde{Y}$ be one of the following Fano 3-folds:
\begin{enumerate}
 \item the blow-up of $Q$ along the intersection of two divisors from $|\mathcal{O}_{Q}(2)|$,
 \item the blow-up of $Q$ along a curve of degree 6 and genus 2,
 \item the blow-up of $Q$ along an elliptic curve of degree 5,
 \item the blow-up of $Q$ along a twisted quartic,
 \item the blow-up of $Q$ along an intersection of two divisors from $|\mathcal{O}_{Q}(1)|$ and $|\mathcal{O}_{Q}(2)|$,
 \item the blow-up of $Q$ along a conic,
 \item the blow-up of $Q$ along a line.
\end{enumerate}
Then Ulrich line bundles can not exist on non of them.
\end{theorem}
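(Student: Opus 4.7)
The plan is to repeat, mutatis mutandis, the strategy of the proof of Theorem \ref{possible Ulrich line bundles on Y}, now invoking Theorem \ref{hilbert polynomial of line bundle on Q} in place of Theorem \ref{hilbert polnomial of line bundle}. For each of the seven classes I would first record the degree $d$ and the genus $g$ of the center $C$ of the blow-up; in those cases where only one of the two invariants is explicitly named, the other follows from the adjunction formula for a smooth curve in $Q$. Since each $\widetilde{Y}$ here is a non-hyperelliptic Fano we have $H_{\widetilde{Q}}=-K_{\widetilde{Q}}=3\widetilde{h}-e$, and $\deg(\widetilde{Y})=H_{\widetilde{Q}}^{3}$ can be read off from the intersection numbers assembled in the proof of Theorem \ref{hilbert polynomial of line bundle on Q}.

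By Proposition \ref{hilbert polynoial condition}, an Ulrich line bundle $\mathcal{O}_{\widetilde{Y}}(D)$ with $D=a\widetilde{h}-be$ must have Hilbert polynomial $\deg(\widetilde{Y})\binom{t+3}{3}$. Matching the coefficient of $t^{2}$ from the formula in Theorem \ref{hilbert polynomial of line bundle on Q} against this target gives a rational linear relation $a=\alpha b+\beta$ with $\alpha,\beta$ depending only on $d$ and $g$, and substituting this into the coefficient of $t^{1}$ yields a quadratic in $b$. For each of the seven classes I expect one of two obstructions: either $\alpha$ (together with $\beta$) fails to produce an integer $a$ on the relevant residue class of integers $b$, or the quadratic in $b$ has no rational root. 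Either obstruction rules out the existence of an Ulrich line bundle.

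The argument is purely arithmetic and case by case, so the main obstacle is bookkeeping rather than any conceptual difficulty. The delicate subcases are those in which the linear equation coming from the $t^{2}$-coefficient does admit integer solutions, by analogy with class (3) of Theorem \ref{possible Ulrich line bundles on Y}; in any such subcase one must compute the discriminant of the quadratic in $b$ explicitly and verify that it is negative or a non-square, forcing $b\notin\mathbb{Q}$. A convenient organizational step is to carry out the elimination symbolically in $d,g$ once, so that each of the seven cases reduces to substituting a single pair of integers and inspecting a numerical discriminant, rather than redoing the algebra seven separate times.
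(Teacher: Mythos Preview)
Your proposal is correct and matches the paper's own proof, which simply states that it follows the same pattern as the proof of Theorem \ref{possible Ulrich line bundles on Y} with minor computational changes. Your additional suggestion of performing the elimination symbolically in $d,g$ once is a sensible organizational refinement, but the underlying strategy is identical.
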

 \begin{proof}
It follows same pattern in proof of Theorem \ref{possible Ulrich line bundles on Y} with minor computational changes.
 \end{proof}

\begin{theorem}
Let $\widetilde{Y}$ be one of the following Fano 3-folds:
\begin{enumerate}
 \item the blow-up of $V_{3}$ along a plane cubic,
 \item the blow-up of $V_{3}$ along a line.
\end{enumerate}
Then Ulrich line bundles can not exist on non of them.
\end{theorem}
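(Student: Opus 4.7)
The plan is to imitate exactly the argument used in Theorem \ref{possible Ulrich line bundles on Y}. Write a candidate divisor class $D=a\widetilde{h}-be$ on $\widetilde{V}_{3}$ with $a,b\in\mathbb{Z}$, and compute $\chi(\widetilde{Y},\mathcal{O}(D+tH_{\widetilde{V}_{3}}))$ using Theorem \ref{hilbert polynomial of line bundle on V3}, plugging in the numerical invariants of the curve in each case: $(d,g)=(3,1)$ for a plane cubic and $(d,g)=(1,0)$ for a line. By Proposition \ref{hilbert polynoial condition}, a line bundle $\mathcal{O}_{\widetilde{Y}}(D)$ is Ulrich only if this polynomial coincides with $\deg(\widetilde{Y})\binom{t+3}{3}$, and $\deg(\widetilde{Y})$ is read off from the $t^{3}$-coefficient already given by Theorem \ref{hilbert polynomial of line bundle on V3}; a direct substitution yields $\deg(\widetilde{Y})=12$ in the plane cubic case and $\deg(\widetilde{Y})=18$ in the line case.

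Having fixed the target Hilbert polynomial, compare the two sides coefficient by coefficient in $t$. The $t^{3}$-coefficients match automatically. The $t^{2}$-coefficient is affine in $(a,b)$, so it expresses $a$ as an integer-affine function of $b$ (and one immediately sees whether the relation is compatible with $a,b\in\mathbb{Z}$). Substituting this relation into the $t^{1}$-coefficient produces a single quadratic equation in $b$ with rational coefficients, and the constant coefficient provides a final consistency check. The claim is that in both cases this quadratic admits no integer root, which shows that no pair $(a,b)\in\mathbb{Z}^{2}$ can make $\mathcal{O}_{\widetilde{Y}}(D)$ satisfy the Ulrich Hilbert polynomial condition, and hence rules out Ulrich line bundles.

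The only real obstacle is arithmetic: one must perform the substitution carefully and verify that the discriminant of the resulting quadratic in $b$ is not a perfect square (or that the integer roots, if any, fail the constant-term equation). This is routine but error-prone, which is presumably why the author suppresses it as \emph{minor computational changes}; the conceptual content is identical to the $\mathbb{P}^{3}$ case treated in Theorem \ref{possible Ulrich line bundles on Y}, and no new geometric input beyond the Riemann--Roch formula of Theorem \ref{hilbert polynomial of line bundle on V3} is required.
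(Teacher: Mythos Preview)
Your proposal is correct and is precisely the approach taken in the paper: the author's own proof reads ``It follows same pattern in proof of Theorem \ref{possible Ulrich line bundles on Y} with minor computational changes,'' and you have spelled out exactly that pattern using Theorem \ref{hilbert polynomial of line bundle on V3} with the correct invariants $(d,g)=(3,1)$ and $(1,0)$ and the correct degrees $\deg(\widetilde{Y})=12,18$.
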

 \begin{proof}
It follows same pattern in proof of Theorem \ref{possible Ulrich line bundles on Y} with minor computational changes.	
 \end{proof}

\begin{theorem}
Let $\widetilde{Y}$ be one of the following Fano 3-folds:
\begin{enumerate}
 \item the blow-up of $V_{4}$ along an elliptic curve which is an intersection of two hyperplane sections,
 \item the blow-up of $V_{4}$ along a conic,
 \item the blow-up of $V_{4}$ along a line.
\end{enumerate}
Then Ulrich line bundles can not exist on non of them.
\end{theorem}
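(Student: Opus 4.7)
The plan is to follow verbatim the pattern established in the proof of Theorem \ref{possible Ulrich line bundles on Y} and the subsequent non-existence results for $\widetilde{Q}$ and $\widetilde{V}_3$. Writing any divisor class as $D = a\widetilde{h} - be$ with $a,b \in \mathbb{Z}$, the Hilbert polynomial $\chi(\widetilde{Y}, \mathcal{O}(D(t)))$ is supplied by Theorem \ref{hilbert polynomial of line bundle on V4} once the curve invariants $d$ and $g$ are fixed. By Proposition \ref{hilbert polynoial condition}, $\mathcal{O}_{\widetilde{Y}}(D)$ is Ulrich only if this polynomial equals $\deg(\widetilde{Y})\binom{t+3}{3}$, yielding equations by matching the coefficients of $t^3, t^2, t, 1$. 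The $t^3$ identity is automatic (it determines $\deg \widetilde{Y} = (60 - 8d + 4g)/2$ as a function of the invariants $d,g$), so the effective constraints come from the $t^2$ and $t$ coefficients.

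First I would record the curve invariants in each case. In case (1), $C$ is an elliptic curve cut out by two hyperplane sections of $V_4$, so $d = (H|_{V_4})^3 = \deg V_4 = 4$ and $g = 1$; in case (2), $C$ is a conic, so $d=2$, $g=0$; and in case (3), $C$ is a line, so $d=1$, $g=0$. For each triple $(d,g)$, I would plug into the formula of Theorem \ref{hilbert polynomial of line bundle on V4}, equate the coefficient of $t^2$ with $\deg(\widetilde{Y})$ to obtain a linear expression for $a$ in terms of $b$, substitute this into the equation obtained from the $t$ coefficient, and thereby reduce the problem to a quadratic equation in $b$ with fixed rational coefficients.

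The main (and only) obstacle is bookkeeping: one must verify in each of the three cases that the quadratic in $b$ produced by this elimination has no integer root; should an integer root happen to appear, the constant-term equation provides a further obstruction that must also fail. Since the argument is identical in form to those of the preceding non-existence theorems, no new idea is required, only careful arithmetic, and in each case one expects the discriminant to fail to be a perfect square, thereby ruling out any integer solution $(a,b)$ and hence any Ulrich line bundle on $\widetilde{Y}$.
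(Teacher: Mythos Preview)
Your proposal is correct and matches the paper's own proof, which simply states that the argument ``follows same pattern in proof of Theorem \ref{possible Ulrich line bundles on Y} with minor computational changes.'' Your outline is in fact more detailed than the paper's, correctly identifying the relevant curve invariants $(d,g)$ in each case and the elimination scheme via the $t^2$ and $t$ coefficients from Theorem \ref{hilbert polynomial of line bundle on V4}.
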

 \begin{proof}
It follows same pattern in proof of Theorem \ref{possible Ulrich line bundles on Y} with minor computational changes.
 \end{proof}

\section{Ulrich Line Bundles on $Y$} 
We recall that $Y$ is the Fano 3-fold which is obtained as the blow-up of $\mathbb{P}^{3}$ along a curve $C$ of degree 6 and genus 3.\\
We also recall the following commutative diagram as in 'Preliminaries' section:
\[
\xymatrix{
& E \ar[d]_{g} \ar@{^{(}->}[r]^j & Y\ar[d]_{f}\\ 
& C \ar@{^{(}->}[r]^i & \mathbb{P}^{3}}
\]

\begin{proposition}
The canonical map $\mathcal{O} _{C}\rightarrow g _{\ast }\mathcal{O}_{E}$ is an
isomorphism.
\end{proposition}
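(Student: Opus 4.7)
The plan is to exploit the fact that $g: E \to C$ is not merely a surjection but a projective bundle. By construction, $E = \mathbb{P}(N_{C/\mathbb{P}^3})$, and since $C$ is a smooth curve in the smooth threefold $\mathbb{P}^3$, the normal bundle $N_{C/\mathbb{P}^3}$ is locally free of rank $2$. Consequently $g: E \to C$ is a Zariski-locally trivial $\mathbb{P}^1$-bundle.

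First I would invoke the standard computation of the cohomology of projective bundles: for any locally free sheaf $\mathcal{F}$ of rank $r+1$ on a scheme $Y$ with projection $\pi : \mathbb{P}(\mathcal{F}) \to Y$, one has $\pi_*\mathcal{O}_{\mathbb{P}(\mathcal{F})}(n) \cong \mathrm{Sym}^n \mathcal{F}^{\vee}$ for $n \geq 0$ (and $0$ for $-r \leq n < 0$). Specialising to $n=0$ yields $\pi_*\mathcal{O}_{\mathbb{P}(\mathcal{F})} \cong \mathcal{O}_Y$, and tracing through the construction shows that the canonical adjunction map $\mathcal{O}_Y \to \pi_*\pi^*\mathcal{O}_Y = \pi_*\mathcal{O}_{\mathbb{P}(\mathcal{F})}$ is precisely this isomorphism. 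Applying this with $\pi = g$, $Y = C$, $\mathcal{F} = N_{C/\mathbb{P}^3}$ gives the result.

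Alternatively (and perhaps more transparently), I would argue via the theorem on formal functions or cohomology and base change: $g$ is proper and flat with all fibres isomorphic to $\mathbb{P}^1$, hence geometrically integral, so $h^0(g^{-1}(c), \mathcal{O}) = 1$ is constant in $c \in C$. By Grauert's theorem, $g_*\mathcal{O}_E$ is then a line bundle on $C$ and the formation of $g_*\mathcal{O}_E$ commutes with base change; in particular the fibre of the adjunction map $\mathcal{O}_C \to g_*\mathcal{O}_E$ at every point is the identity $k \to H^0(\mathbb{P}^1, \mathcal{O}) = k$. Hence the map is an isomorphism of line bundles.

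There is essentially no obstacle: the statement is a formal consequence of $E$ being a $\mathbb{P}^1$-bundle over $C$, and the only thing to verify is that the rank of $N_{C/\mathbb{P}^3}$ is $2$ (which holds because $C$ has codimension $2$ in $\mathbb{P}^3$ and both are smooth), so that the fibres of $g$ are indeed $\mathbb{P}^1$ rather than something higher-dimensional.
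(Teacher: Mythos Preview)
Your proposal is correct and takes essentially the same approach as the paper: both arguments reduce to the observation that $g:E\to C$ is a $\mathbb{P}^{1}$-bundle (a ruled surface), from which $g_{*}\mathcal{O}_{E}\cong\mathcal{O}_{C}$ follows. The paper simply cites \cite[Lemma V.2.1]{Har77} for this last step, whereas you spell out two standard proofs of that lemma (the projective-bundle pushforward formula and Grauert's theorem); the content is the same.
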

 \begin{proof}
Note that $g:E\rightarrow C$ is a ruled surface. Then the result follows from \cite[Lemma 2.1 of Chapter V]{Har77}.
 \end{proof}

\begin{corollary} \label{direct image of exceptional divisor}
 $f_{\ast }(\mathcal{O}_{\widetilde{\mathbb{P}}^{3}}(-m E))=I_{C}^{m}$ and $R^{i}f_{\ast}\mathcal{O}_{\widetilde{\mathbb{P}}^{3}}(-me)=0$ for $m\geq0$ and $i>0$.
\end{corollary}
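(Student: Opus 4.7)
My plan is to prove both statements simultaneously by induction on $m$, using the short exact sequence of ideal/restriction and pushing it forward via $f$.

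For the base case $m=0$, we have $f_{*}\mathcal{O}_{\widetilde{\mathbb{P}}^{3}}=\mathcal{O}_{\mathbb{P}^{3}}=I_{C}^{0}$ and $R^{i}f_{*}\mathcal{O}_{\widetilde{\mathbb{P}}^{3}}=0$ for $i>0$, which is the standard rational-singularities statement for the blow-up of a smooth subvariety in a smooth variety (equivalently, it follows from the formal function theorem together with Zariski's main theorem).

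For the inductive step, assuming the result for $m-1$, I tensor the tautological sequence
\[
0\to\mathcal{O}_{\widetilde{\mathbb{P}}^{3}}(-E)\to\mathcal{O}_{\widetilde{\mathbb{P}}^{3}}\to\mathcal{O}_{E}\to 0
\]
with the line bundle $\mathcal{O}_{\widetilde{\mathbb{P}}^{3}}(-(m-1)E)$ to obtain
\[
0\to\mathcal{O}_{\widetilde{\mathbb{P}}^{3}}(-mE)\to\mathcal{O}_{\widetilde{\mathbb{P}}^{3}}(-(m-1)E)\to\mathcal{O}_{E}(-(m-1)E)\to 0,
\]
and then apply $f_{*}$. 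The preceding proposition ($g_{*}\mathcal{O}_{E}\cong\mathcal{O}_{C}$) plus the projective bundle description $E=\mathbb{P}(N^{*}_{C/\mathbb{P}^{3}})$ with $\mathcal{O}_{E}(-E)=\mathcal{O}_{\mathbb{P}(N^{*})}(1)$ gives, for $m\geq 1$,
\[
g_{*}\mathcal{O}_{E}(-(m-1)E)=S^{m-1}N^{*}_{C/\mathbb{P}^{3}}\cong I_{C}^{m-1}/I_{C}^{m},
\]
where the last identification uses that $C$ is a smooth (hence local complete intersection) curve in $\mathbb{P}^{3}$. Moreover, $R^{j}g_{*}\mathcal{O}_{\mathbb{P}(N^{*})}(m-1)=0$ for all $j\geq 1$ and $m-1\geq 0$ by the standard cohomology of projective bundles, so pushing forward via $f=i\circ g$ on $E$ yields $f_{*}\mathcal{O}_{E}(-(m-1)E)=i_{*}(I_{C}^{m-1}/I_{C}^{m})$ and $R^{j}f_{*}\mathcal{O}_{E}(-(m-1)E)=0$ for $j\geq 1$.

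Feeding this and the inductive hypothesis into the long exact sequence of higher direct images yields
\[
0\to f_{*}\mathcal{O}_{\widetilde{\mathbb{P}}^{3}}(-mE)\to I_{C}^{m-1}\xrightarrow{\ \varphi\ }i_{*}(I_{C}^{m-1}/I_{C}^{m})\to R^{1}f_{*}\mathcal{O}_{\widetilde{\mathbb{P}}^{3}}(-mE)\to 0,
\]
and $R^{i}f_{*}\mathcal{O}_{\widetilde{\mathbb{P}}^{3}}(-mE)=0$ for $i\geq 2$ (these latter vanishings come directly from the adjacent zeros in the sequence). By naturality, the map $\varphi$ is the canonical quotient $I_{C}^{m-1}\twoheadrightarrow I_{C}^{m-1}/I_{C}^{m}$, whose kernel is $I_{C}^{m}$; this identifies $f_{*}\mathcal{O}_{\widetilde{\mathbb{P}}^{3}}(-mE)\cong I_{C}^{m}$ and forces $R^{1}f_{*}\mathcal{O}_{\widetilde{\mathbb{P}}^{3}}(-mE)=0$.

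The only subtle point is verifying that the connecting map in the push-forward long exact sequence really is the canonical surjection $I_{C}^{m-1}\to I_{C}^{m-1}/I_{C}^{m}$; this amounts to checking the compatibility of two natural identifications (the inductive hypothesis for $\mathcal{O}_{\widetilde{\mathbb{P}}^{3}}(-(m-1)E)$ on one side, and the isomorphism $S^{m-1}N^{*}\cong I_{C}^{m-1}/I_{C}^{m}$ on the other), which I would settle by working locally on $\mathbb{P}^{3}$ where $C$ is cut out by a regular sequence of length two and the assertion reduces to the analogous formula for the blow-up of an affine space along a linear subspace. Once this is in place, the induction closes and the corollary follows.
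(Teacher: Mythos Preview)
Your argument is correct and is precisely the standard inductive proof of this well-known fact about blow-ups along smooth centers. The paper itself gives no argument at all and simply cites \cite[Lemma~4.3.16]{Laz04}; the proof you have written is essentially the one found there, so there is nothing to compare.
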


 \begin{proof}
See \cite[Lemma 4.3.16]{Laz04}.
 \end{proof}

\begin{lemma} \label{canonical bundle of E}
$f_{\ast }\mathcal{O}_{E}(mE)=0$ for $m>0$.
\end{lemma}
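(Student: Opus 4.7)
The plan is to factor the push-forward through the curve $C$, exploiting the fact that $E$ is a $\mathbb{P}^{1}$-bundle over $C$, so that the vanishing reduces to the standard computation of the push-forward of a negative twist on a projective bundle.

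First, I would identify $E$ with $\mathbb{P}(N)$, where $N = N_{C/\mathbb{P}^{3}}$ is the rank-$2$ normal bundle of $C$ in $\mathbb{P}^{3}$, so that $g \colon E \to C$ is the structure map of the projectivization. Under this identification, the standard self-intersection formula for an exceptional divisor of a blow-up along a smooth center gives
\[
\mathcal{O}_{E}(E) \;\cong\; \mathcal{O}_{\mathbb{P}(N)}(-1),
\]
i.e.\ $\mathcal{O}_{E}(E)$ is the tautological line bundle on the projective bundle. Therefore, for every $m>0$,
\[
\mathcal{O}_{E}(mE) \;\cong\; \mathcal{O}_{\mathbb{P}(N)}(-m).
\]

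Next, I would use that $f\circ j = i\circ g$ as maps $E \to \mathbb{P}^{3}$, together with the fact that $\mathcal{O}_{E}(mE)$ viewed as a sheaf on $\widetilde{\mathbb{P}}^{3}$ is $j_{\ast}\mathcal{O}_{E}(mE)$, to rewrite
\[
f_{\ast}\mathcal{O}_{E}(mE) \;=\; f_{\ast} j_{\ast}\mathcal{O}_{E}(mE) \;=\; i_{\ast}\bigl(g_{\ast}\mathcal{O}_{\mathbb{P}(N)}(-m)\bigr).
\]
Since $i_{\ast}$ is exact and faithful, the lemma reduces to showing $g_{\ast}\mathcal{O}_{\mathbb{P}(N)}(-m)=0$ for $m>0$. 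This is the standard fact that for a projective bundle $\pi\colon \mathbb{P}(V)\to Y$ one has $\pi_{\ast}\mathcal{O}_{\mathbb{P}(V)}(k)=0$ for $k<0$; it can be seen directly via cohomology and base change, because fiberwise the bundle restricts to $\mathcal{O}_{\mathbb{P}^{1}}(-m)$, which has no global sections.

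I do not expect any serious obstacle; the only point requiring care is the sign convention in $\mathcal{O}_{E}(E)\cong \mathcal{O}_{\mathbb{P}(N)}(-1)$ (as opposed to $\mathcal{O}(+1)$), which is precisely the content of the blow-up normal bundle formula and is consistent with the usage in Corollary \ref{direct image of exceptional divisor}.
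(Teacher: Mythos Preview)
Your argument is correct and is in fact the cleanest way to prove the lemma. Both you and the paper ultimately reduce to the observation that $\mathcal{O}_{E}(mE)$ has degree $-m$ on each fiber of $g$, hence has no sections along fibers; the difference lies only in how this degree is obtained. You invoke directly the standard blow-up identity $\mathcal{O}_{E}(E)\cong\mathcal{O}_{\mathbb{P}(N)}(-1)$ and then the projective-bundle pushforward formula. The paper instead passes through adjunction, writing $\mathcal{O}_{E}(E)\cong\omega_{E}(1)$, then uses the canonical class of a ruled surface $K_{E}\equiv -2C_{0}+D_{C}\cdot F$ to compute $(mK_{E}+mH_{E})\cdot F=-m$, and finally appeals to the argument of \cite[V.2.1]{Har77}. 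Your route is shorter and avoids the detour through $K_{E}$; the paper's route has the minor advantage of staying within the divisor-class computations already set up in the preceding sections, but functionally the two proofs are equivalent.
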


 \begin{proof}
Note that $g:E\rightarrow C$ is a ruled surface. So, by \cite[Proposition 8.20 of Chapter II]{Har77}, we have
\begin{eqnarray*}
w_{E}\cong w_{Y} \otimes \mathcal{O}_{Y}(E) \otimes \mathcal{O}_{E}&\Rightarrow&w_{E}\cong\mathcal{O}_{E}(E)(-1)\\
&\Rightarrow&\mathcal{O}_{E}(E)\cong w_{E}(1)\\
&\Rightarrow&\mathcal{O}_{E}(E) \cong \mathcal{O}_{E}(K_{E}+H_{E}).
\end{eqnarray*}
Then we have 
\begin{eqnarray*}
\mathcal{O}_{E}(mE) &\cong& \mathcal{O}_{Y}(mE) \otimes \mathcal{O}_{E}\\
&\cong& \mathcal{O}_{Y}(E)^{\otimes m } \otimes \mathcal{O}_{E}\\
&\cong& [\mathcal{O}_{Y}(E) \otimes \mathcal{O}_{E}]^{\otimes m}\\
&\cong& \mathcal{O}_{E}(E)^{\otimes m }\\
&\cong& \mathcal{O}_{E}(K_{E}+H_{E})^{\otimes m }\\
&\cong& \mathcal{O}_{E}(m(K_{E}+H_{E}))
\end{eqnarray*}
where $D=mK_{E}+mH_{Y}$.\\
Also, by \cite[Lemma 2.10 in Chapter V]{Har10}, we know that  
	\[K_{E} \cong -2C_{0}+D_{C} \cdot F
\]
where $C_{0}$ is a section of the map $g$, $F$ is the fiber of $g$ and $D_{C}$ is a divisor class on $C$.\\
Then
\begin{eqnarray*}
D \cdot F &=& (-2mC_{0}+mD_{C} \cdot F+mH_{E})\cdot F\\
&=& -2mC_{0} \cdot F + mD_{C} \cdot F^{2} + mH_{E} \cdot F\\
&=& -2m+0+m\\
&=& -m.
\end{eqnarray*}
Hence $D \cdot F$ is negative. So, following the proof of \cite[Lemma 2.1 in Chapter V]{Har10}, one can easily show that
	\[f_{\ast }\mathcal{O}_{E}(D)=f_{\ast }\mathcal{O}_{E}(mE)=0.
\]

 \end{proof}

\begin{proposition}
$f _{\ast }(\mathcal{O} _{\widetilde{\mathbb{P}}^{3}}(mE))=\mathcal{O} _{\mathbb{P}^{3}}$ for $m>0.$
\end{proposition}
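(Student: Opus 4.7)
The plan is to proceed by induction on $m \geq 1$, using as the base the identity $f_*\mathcal{O}_{\widetilde{\mathbb{P}}^3} = \mathcal{O}_{\mathbb{P}^3}$, which holds because $f$ is a birational morphism onto a smooth (hence normal) variety. For the inductive step, I would start from the structure sequence of the exceptional divisor,
$$0 \to \mathcal{O}_{\widetilde{\mathbb{P}}^3}(-E) \to \mathcal{O}_{\widetilde{\mathbb{P}}^3} \to \mathcal{O}_E \to 0,$$
and twist it by $\mathcal{O}_{\widetilde{\mathbb{P}}^3}(mE)$ to obtain
$$0 \to \mathcal{O}_{\widetilde{\mathbb{P}}^3}((m-1)E) \to \mathcal{O}_{\widetilde{\mathbb{P}}^3}(mE) \to \mathcal{O}_E(mE) \to 0.$$

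Next, I would apply the left exact functor $f_*$ to obtain the exact sequence
$$0 \to f_*\mathcal{O}_{\widetilde{\mathbb{P}}^3}((m-1)E) \to f_*\mathcal{O}_{\widetilde{\mathbb{P}}^3}(mE) \to f_*\mathcal{O}_E(mE).$$
By Lemma \ref{canonical bundle of E}, the rightmost term vanishes for $m > 0$, so the natural inclusion yields an isomorphism $f_*\mathcal{O}_{\widetilde{\mathbb{P}}^3}((m-1)E) \cong f_*\mathcal{O}_{\widetilde{\mathbb{P}}^3}(mE)$. Iterating this from $m = 1$ upward and combining with the base case gives $f_*\mathcal{O}_{\widetilde{\mathbb{P}}^3}(mE) \cong \mathcal{O}_{\mathbb{P}^3}$ for every $m > 0$.

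I do not anticipate a significant obstacle: the one nontrivial vanishing needed has already been packaged into Lemma \ref{canonical bundle of E}, and the remaining argument is a routine application of the structure sequence together with an induction. The only point worth verifying carefully is the base case $f_*\mathcal{O}_{\widetilde{\mathbb{P}}^3} = \mathcal{O}_{\mathbb{P}^3}$, which is a standard consequence of Zariski's main theorem applied to the birational morphism $f$ with smooth target.
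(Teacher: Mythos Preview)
Your proposal is correct and follows essentially the same approach as the paper: twist the structure sequence of $E$ by $\mathcal{O}_{\widetilde{\mathbb{P}}^3}(mE)$, push forward, invoke Lemma~\ref{canonical bundle of E} to kill the rightmost term, and induct down to the base case $f_*\mathcal{O}_{\widetilde{\mathbb{P}}^3}=\mathcal{O}_{\mathbb{P}^3}$. The paper spells out the cases $m=1,2$ before declaring the induction, whereas you set up the inductive step directly; the content is identical.
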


 \begin{proof}
We have the exact sequence 
 
\begin{center}
$0\rightarrow \mathcal{O} _{\widetilde{\mathbb{P}}^{3}}(-E)\rightarrow \mathcal{O}_{\widetilde{\mathbb{P}}^{3}}\rightarrow \mathcal{O} _{E}\rightarrow 0$.
\end{center}
Now twist this exact sequence by $E$ and get

\begin{center}
$0\rightarrow \mathcal{O} _{\widetilde{\mathbb{P}}^{3}}\rightarrow \mathcal{O} _{\widetilde{\mathbb{P}}^{3}}(E)\rightarrow \mathcal{O} _{E}(E)\rightarrow 0$.~~~~~~~~(*)
\end{center}
Then consider the long exact sequence

\begin{center}
$0\rightarrow f _{\ast }\mathcal{O} _{\widetilde{\mathbb{P}}^{3}}\rightarrow f _{\ast
}\mathcal{O}_{\widetilde{\mathbb{P}}^{3}}(E)\rightarrow f _{\ast }\mathcal{O} _{E}(E)\rightarrow
\cdot \cdot \cdot $.
\end{center}
By Lemma \ref{canonical bundle of E}, $f _{\ast }\mathcal{O} _{E}(E)=0$. So 

\begin{center}
$f _{\ast }\mathcal{O} _{\widetilde{\mathbb{P}}^{3}}(E)\simeq f _{\ast }\mathcal{O} _{\widetilde{\mathbb{P}}^{3}}\simeq \mathcal{O} _{P^{3}}$.
\end{center}
Similarly, now twist the exact sequence (*) by $2E$ and get

\begin{center}
$0\rightarrow \mathcal{O} _{\widetilde{\mathbb{P}}^{3}}(E)\rightarrow \mathcal{O}_{\widetilde{\mathbb{P}}^{3}}(2E)\rightarrow \mathcal{O}_{E}(2E)\rightarrow 0$.
\end{center}
Then consider the exact sequence

\begin{center}
$0\rightarrow f_{\ast }\mathcal{O}_{\widetilde{\mathbb{P}}^{3}}(E)\rightarrow f_{\ast
}\mathcal{O}_{\widetilde{\mathbb{P}}^{3}}(2E)\rightarrow f_{\ast }\mathcal{O}_{E}(2E)\rightarrow
\cdot \cdot \cdot $.
\end{center}
Again by Lemma \ref{canonical bundle of E} $f_{\ast }\mathcal{O}_{E}(2E)=0$. Therefore

\begin{center}
$f_{\ast }\mathcal{O}_{\widetilde{\mathbb{P}}^{3}}(2E)\simeq f_{\ast }\mathcal{O}_{\widetilde{\mathbb{P}}^{3}}(E)\simeq \mathcal{O}_{\mathbb{P}^{3}}$.
\end{center}
Hence, by induction on $m$, we have $f_{\ast }(\mathcal{O}_{\widetilde{\mathbb{P}}^{3}}(mE))=\mathcal{O}_{\mathbb{P}^{3}}$ for $m>0.$
 \end{proof}

\begin{lemma}\label{Ulrich duailty}
Let $\mathcal{E}$ be an Ulrich bundle of rank r on $\widetilde{Y}$. Then $\mathcal{E}^{\vee}(3)$ is also Ulrich.
\end{lemma}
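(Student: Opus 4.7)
The plan is to combine the cohomological characterization of Ulrich bundles from Proposition \ref{hilbert polynoial condition} (ACM together with Hilbert polynomial $cr\binom{t+k}{k}$) with Serre duality on $\widetilde{Y}$. The structural fact I will exploit is that $\widetilde{Y}$ is a non-hyperelliptic Fano 3-fold embedded by its very ample anticanonical system, so $K_{\widetilde{Y}} \cong \mathcal{O}_{\widetilde{Y}}(-H_{\widetilde{Y}})$; this is precisely what makes the shift by $3H = (k+1)H + K_{\widetilde{Y}}$ the correct one to dualize Ulrichness.

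First I would check the ACM condition for $\mathcal{E}^{\vee}(3)$. Serre duality on the smooth projective 3-fold $\widetilde{Y}$ together with $K_{\widetilde{Y}} \cong \mathcal{O}(-H)$ gives, for every $t \in \mathbb{Z}$,
\[
H^i(\widetilde{Y}, \mathcal{E}^{\vee}(3+t)) \;\cong\; H^{3-i}(\widetilde{Y}, \mathcal{E}(-t-4))^{\vee}.
\]
When $0 < i < 3$ the complementary index $3-i$ also lies in $(0,3)$, so the right-hand side vanishes because $\mathcal{E}$ is ACM. Hence $\mathcal{E}^{\vee}(3)$ is ACM.

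Next I would compute its Hilbert polynomial. Since $\mathcal{E}$ is Ulrich of rank $r$, $\chi(\mathcal{E}(s)) = cr\binom{s+3}{3}$, and Serre duality yields
\[
\chi(\mathcal{E}^{\vee}(3+t)) \;=\; (-1)^{3}\,\chi(\mathcal{E}(-t-4)) \;=\; -cr\binom{-t-1}{3}.
\]
A one-line manipulation shows $\binom{-t-1}{3} = -\binom{t+3}{3}$, so $\chi(\mathcal{E}^{\vee}(3)(t)) = cr\binom{t+3}{3}$. Combined with the ACM property already established, Proposition \ref{hilbert polynoial condition} immediately gives that $\mathcal{E}^{\vee}(3)$ is Ulrich of the same rank.

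I do not foresee a genuine obstacle: the argument is purely formal, resting on Serre duality, the identity $K_{\widetilde{Y}} = -H$ in the anticanonically embedded Fano case, and the Hilbert-polynomial criterion already at our disposal. The only bookkeeping is the sign flip in the binomial identity, which is routine.
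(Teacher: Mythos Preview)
Your proposal is correct and follows essentially the same argument as the paper: both verify the ACM condition via Serre duality using $K_{\widetilde{Y}}=-H$ to obtain $H^{i}(\mathcal{E}^{\vee}(3+t))\cong H^{3-i}(\mathcal{E}(-4-t))^{\vee}$, and then check the Hilbert polynomial condition by the same duality and the binomial sign-flip, concluding via Proposition~\ref{hilbert polynoial condition}. The only cosmetic difference is that the paper writes out the Euler characteristic as an alternating sum of $h^{i}$'s while you invoke $\chi(\mathcal{F}^{\vee}\otimes K)=(-1)^{3}\chi(\mathcal{F})$ directly.
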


 \begin{proof}
We use Proposition \ref{hilbert polynoial condition}.\\
First, 
\begin{eqnarray*}
(*)\ \ H^{i}(\widetilde{Y},\mathcal{E}^{\vee}(3)(t)) &=& H^{i}(\widetilde{Y},\mathcal{E}^{\vee}(3+t))\\
&=&H^{3-i}(\widetilde{Y},\mathcal{E}(-3-t)\otimes K_{\widetilde{Y}})^{\vee} \ (Serre\ Duality)\\
&=&H^{3-i}(\widetilde{Y},\mathcal{E}(-3-t)\otimes (-H))^{\vee} \ (\widetilde{Y}\ is\ Fano)\\
&=&H^{3-i}(\widetilde{Y},\mathcal{E}(-4-t))^{\vee}.
\end{eqnarray*}
But we know that $\mathcal{E}$ is Ulrich, so it is ACM by Proposition \ref{hilbert polynoial condition}. Then the middle cohomologies of all twists of $\mathcal{E}$ vanish; so $H^{3-i}(\widetilde{Y},\mathcal{E}(-4-t))$ vanishes for $i=1,2$ and $t \in \mathbb{Z}$.\\
Hence $H^{i}(\widetilde{Y},\mathcal{E}^{\vee}(3)(t))=0$; that is, $\mathcal{E}^{\vee}(3)$ is ACM.\\
Second,
\begin{eqnarray*}
\chi(\widetilde{Y},\mathcal{E}^{\vee}(3)(t))&=&\sum^{3}_{i=0}(-1)^{i}h^{i}(\widetilde{Y},\mathcal{E}^{\vee}(3)(t))\\
&=&\sum^{3}_{i=0}(-1)^{i}h^{3-i}(\widetilde{Y},\mathcal{E}(-4-t)) \ \ \ (by\ (*))\\
&=&-cr\binom{-4-t+3}{3}\\
&=&-cr\frac{(-t-1)(-t-2)(-t-3)}{6}\\
&=&cr\frac{(t+1)(t+2)(t+3)}{6}\\
&=&cr\binom{t+3}{3}
\end{eqnarray*}
Therefore $\mathcal{E}^{\vee}(3)$ is Ulrich by Proposition \ref{hilbert polynoial condition}.
 \end{proof}

\begin{lemma}\label{ideal cohomology}
Let $C$ be a curve cut out scheme-theoretically in $\mathbb{P}^{3}$ by cubic hypersurfaces. Then
\begin{center}
$H^{i}( \mathbb{P}^{3},I^{a}_{C|\mathbb{P}^{3}}(k)) =0 $ for $i\geq1$ provided $k\geq3a$.
\end{center}
\end{lemma}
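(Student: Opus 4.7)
The plan is to pull the cohomological question back to the blow-up $f\colon\widetilde{\mathbb{P}}^{3}\to\mathbb{P}^{3}$ of $\mathbb{P}^{3}$ along $C$, and then apply Kawamata-Viehweg vanishing.

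By Corollary~\ref{direct image of exceptional divisor}, one has $f_{*}\mathcal{O}_{\widetilde{\mathbb{P}}^{3}}(-aE)=I_{C}^{a}$ and $R^{j}f_{*}\mathcal{O}_{\widetilde{\mathbb{P}}^{3}}(-aE)=0$ for $j\geq 1$ and $a\geq 0$. Using the projection formula, the same holds after twisting by $f^{*}\mathcal{O}(k)=\mathcal{O}(k\widetilde{h})$: we get $f_{*}\mathcal{O}(k\widetilde{h}-ae)=I_{C}^{a}(k)$ and $R^{j}f_{*}\mathcal{O}(k\widetilde{h}-ae)=0$ for $j\geq 1$. The Leray spectral sequence (Corollary~\ref{Leray spectral sequence}) therefore reduces the claim to the vanishing
\[
H^{i}\!\bigl(\widetilde{\mathbb{P}}^{3},\mathcal{O}(k\widetilde{h}-ae)\bigr)=0\quad\text{for } i\geq 1,\ k\geq 3a.
\]

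I would establish this via Kawamata-Viehweg on the smooth projective threefold $\widetilde{\mathbb{P}}^{3}$. Using $K_{\widetilde{\mathbb{P}}^{3}}=-4\widetilde{h}+e$, compute
\[
(k\widetilde{h}-ae)-K_{\widetilde{\mathbb{P}}^{3}}=(k+4)\widetilde{h}-(a+1)e=(k-3a+1)\widetilde{h}+(a+1)(3\widetilde{h}-e).
\]
The hypothesis that $C$ is cut out scheme-theoretically by cubics is equivalent to global generation of $I_{C}(3)$ on $\mathbb{P}^{3}$; pulling back via $f$ gives $f^{*}I_{C}(3)=\mathcal{O}(3\widetilde{h}-e)$, which is therefore globally generated on $\widetilde{\mathbb{P}}^{3}$ and in particular nef. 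The class $\widetilde{h}=f^{*}h$ is nef and big (it is a pullback of an ample class, and $\widetilde{h}^{3}=1>0$); for $k\geq 3a$ the coefficient $k-3a+1$ is strictly positive, so the displayed divisor is the sum of a positive multiple of the nef-and-big class $\widetilde{h}$ and a non-negative multiple of the nef class $3\widetilde{h}-e$, hence is itself nef and big (since nef plus big-and-nef is big-and-nef). Kawamata-Viehweg vanishing then gives $H^{i}(\widetilde{\mathbb{P}}^{3},\mathcal{O}(k\widetilde{h}-ae))=0$ for $i\geq 1$, which is exactly what is needed.

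The main step is the translation of the cubic cut-out hypothesis into the nefness of $3\widetilde{h}-e$ on $\widetilde{\mathbb{P}}^{3}$; once this positivity is in hand, the result is a packaging of standard facts about the blow-up, the projection formula, and Kawamata-Viehweg vanishing.
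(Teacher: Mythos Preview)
Your argument is correct. The paper's own proof is a one-line citation to \cite[Proposition~1]{BEL91}, and what you have written is precisely the proof of that proposition specialized to this situation: pass to the blow-up, use $R^{j}f_{*}\mathcal{O}(-aE)=0$ together with the Leray spectral sequence to reduce to a vanishing on $\widetilde{\mathbb{P}}^{3}$, and then apply Kawamata--Viehweg after writing $(k\widetilde{h}-ae)-K_{\widetilde{\mathbb{P}}^{3}}$ as a positive combination of the nef-and-big class $\widetilde{h}$ and the nef class $3\widetilde{h}-e$. So there is no genuine difference in approach; you have simply unpacked the cited reference.

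One small notational point: the identification ``$f^{*}I_{C}(3)=\mathcal{O}(3\widetilde{h}-e)$'' is an abuse, since $f^{*}I_{C}$ has torsion along $E$. What is literally true is that $f^{-1}I_{C}\cdot\mathcal{O}_{\widetilde{\mathbb{P}}^{3}}=\mathcal{O}(-E)$, so global generation of $I_{C}(3)$ yields a surjection $\mathcal{O}^{N}\twoheadrightarrow\mathcal{O}(3\widetilde{h}-e)$ on the blow-up, and hence $3\widetilde{h}-e$ is base-point free and nef. The conclusion is unaffected.
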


 \begin{proof}
This is a special case of \cite[Proposition 1]{BEL91}.
 \end{proof}

\begin{lemma}\label{resolution}
If $C$ is an ACM curve in $\mathbb{P}^{3}$ with $d=6$ and $g=3$, then its ideal sheaf $I_{C}$ in $\mathbb{P}^{3}$ has the minimal free resolution:
\[0 \rightarrow \mathcal{O}_{\mathbb{P}^{3}}^{\oplus3}(-4) \rightarrow \mathcal{O}_{\mathbb{P}^{3}}^{\oplus4}(-3)\rightarrow I_{C} \rightarrow 0.
\]
\end{lemma}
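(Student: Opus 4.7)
The plan is to derive the resolution from the ACM hypothesis via Castelnuovo--Mumford regularity and the Hilbert--Burch theorem for codimension-2 Cohen--Macaulay ideals.

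First, I would compute $\chi(I_C(t))$ from the short exact sequence
\[
0 \to I_C \to \mathcal{O}_{\mathbb{P}^3} \to \mathcal{O}_C \to 0,
\]
using $P_{\mathcal{O}_C}(t) = 6t - 2$, obtaining
\[
\chi(I_C(t)) \;=\; \binom{t+3}{3} - 6t + 2.
\]
The ACM hypothesis gives $H^1(I_C(t))=0$ for all $t$. Combined with $H^2(I_C(t)) \cong H^1(\mathcal{O}_C(t))$, which vanishes for $t \geq 1$ since $\deg \mathcal{O}_C(1) = 6 > 2g - 2 = 4$, and $H^3(I_C(t)) \cong H^2(\mathcal{O}_C(t)) = 0$, this yields $h^0(I_C(t)) = \chi(I_C(t))$ for $t \geq 1$. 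In particular, $h^0(I_C(1)) = 0$, $h^0(I_C(2)) = 0$, and $h^0(I_C(3)) = 4$.

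Next, I would verify that $I_C$ is $3$-regular in the sense of Castelnuovo--Mumford: the vanishings $H^1(I_C(2)) = H^2(I_C(1)) = H^3(I_C(0)) = 0$ all follow from the cohomological analysis above. Consequently $I_C$ is generated in degrees $\leq 3$, and since $h^0(I_C(2)) = 0$ and $h^0(I_C(3)) = 4$, the four independent cubics produce a minimal surjection $\mathcal{O}_{\mathbb{P}^3}(-3)^{\oplus 4} \twoheadrightarrow I_C$.

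Finally, because $I_C$ defines a codimension-2 arithmetically Cohen--Macaulay subscheme, the Hilbert--Burch theorem (equivalently, the Auslander--Buchsbaum formula, which forces projective dimension one) yields a minimal free resolution
\[
0 \to F_1 \to \mathcal{O}_{\mathbb{P}^3}(-3)^{\oplus 4} \to I_C \to 0
\]
with $F_1$ free of rank $3$. The shifts in $F_1$ are then pinned down by matching Hilbert polynomials: the identity $\chi(I_C(t)) = 4\binom{t}{3} - \sum_j \binom{t-b_j+3}{3}$ evaluated against the values computed in the first step forces every $b_j = 4$, giving $F_1 = \mathcal{O}_{\mathbb{P}^3}(-4)^{\oplus 3}$. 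The main obstacle is the cohomological bookkeeping needed to establish $3$-regularity; once that is in place, identifying the Betti numbers is routine.
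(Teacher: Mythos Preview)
Your proposal is correct and follows essentially the same structure as the paper's proof: both use the ACM hypothesis to obtain a length-one minimal resolution, show that the generators are four cubics, and then pin down the syzygy twists.

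The differences are minor and largely a matter of self-containment. The paper cites \cite{EI88} for the shape of the resolution and \cite[Ex.~8.7(c)]{Har10} for the global generation of $I_C(3)$ and the values $h^0(I_C(3))=4$, $h^i(I_C(3-i))=0$; you instead compute these directly from the Euler characteristic and verify $3$-regularity by hand. To determine the syzygy shifts, the paper uses the vanishing $h^2(I_C(1))=0$ to force $n_j\le 4$ and minimality to force $n_j\ge 4$, whereas you match Hilbert polynomials; both arguments are standard and equivalent. One small quibble: your claim $H^3(I_C(t))\cong H^2(\mathcal{O}_C(t))$ is not literally an isomorphism in general---the long exact sequence gives $H^3(I_C(t))\cong H^3(\mathcal{O}_{\mathbb{P}^3}(t))$ once $H^2(\mathcal{O}_C(t))=0$---but the conclusion $H^3(I_C(0))=0$ is unaffected.
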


 \begin{proof}
Since $C$ is ACM, by \cite[p.2]{EI88}, it has a minimal free resolution of the form:
\[0 \rightarrow \bigoplus^{k-1}_{j=1}\mathcal{O}_{\mathbb{P}^{3}}(-n_{j}) \rightarrow \bigoplus^{k}_{l=1}\mathcal{O}_{\mathbb{P}^{3}}(-m_{l})\rightarrow I_{C} \rightarrow 0.
\]
Since $I_{C}(3)$ is generated by global sections \cite[Ex. 8.7(c)]{Har10}, $m_{l}=3$ for all $l$ and we have:
\[(*)\ \ \ \ \ \ \ \ \ \ \ \ \ 0 \rightarrow \bigoplus^{k-1}_{j=1}\mathcal{O}_{\mathbb{P}^{3}}(-n_{j}) \rightarrow \bigoplus^{k}_{l=1}\mathcal{O}_{\mathbb{P}^{3}}(-3)\rightarrow I_{C} \rightarrow 0.
\]
We know that $h^{0}(I_{C}(3))=4$ and $h^{i}(I_{C}(3-i))=0$ for all $i>0$ by \cite[Ex. 8.7(c)]{Har10}.\\ 
Since $h^{i}(I_{C}(3-i))=0$ for all $i>0$, we have $h^{2}(I_{C}(1))=0$.\\ Then $\sum^{k-1}_{j=1}h^{3}(-n_{j}+1)=\sum^{k-1}_{j=1}h^{0}(n_{j}-1-4)=0$. Then $n_{j}\leq4$. But, since (*) is a minimal free resolution, we have $n_{j}\geq4$. So $n_{j}=4$. Since $h^{0}(I_{C}(3))=4$, we have $k=4$. 
 \end{proof}

\begin{proposition}[Yusuf Mustopa, written in private communication]\label{square of ideal}
If $C$ is an ACM space curve with $d=6$ and $g=3$, then $H^{i}(I^{2}_{C}(5))=0$ for all $i>0$.
\end{proposition}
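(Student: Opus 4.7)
The plan is to exploit the short exact sequence
\[ 0 \to I_C^2(5) \to I_C(5) \to (I_C/I_C^2)(5) \to 0, \]
noting that since $C$ is smooth (hence a local complete intersection in $\mathbb{P}^3$), $I_C/I_C^2$ is isomorphic to the pushforward of the conormal bundle $N^\vee := N^\vee_{C/\mathbb{P}^3}$. Lemma~\ref{ideal cohomology} applied with $a=1,\ k=5$ gives $H^i(I_C(5)) = 0$ for $i \geq 1$, so the long exact cohomology sequence reduces the proposition to two claims: (a) $H^i(C, N^\vee(5)) = 0$ for $i \geq 1$, and (b) the natural restriction map $H^0(I_C(5)) \to H^0(C, N^\vee(5))$ is surjective.

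Both (a) and (b) will be deduced from restricting to $C$ the twisted minimal resolution
\[ 0 \to \mathcal{O}_{\mathbb{P}^3}(1)^{\oplus 3} \to \mathcal{O}_{\mathbb{P}^3}(2)^{\oplus 4} \to I_C(5) \to 0 \]
obtained from Lemma~\ref{resolution}. Tensoring with $\mathcal{O}_C$ and using $I_C \otimes \mathcal{O}_C \cong N^\vee$ yields the 4-term exact sequence
\[ 0 \to \mathcal{T} \to \mathcal{O}_C(1)^{\oplus 3} \to \mathcal{O}_C(2)^{\oplus 4} \to N^\vee(5) \to 0, \]
where $\mathcal{T}$ is the sheaf $\mathcal{T}or_1(I_C(5),\mathcal{O}_C)$, which is supported on $C$ because $I_C$ is locally free away from $C$. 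Let $K := \ker(\mathcal{O}_C(2)^{\oplus 4} \to N^\vee(5))$, so the sequence splits into two short exact ones with middle term $K$. Since $\deg \mathcal{O}_C(1) = 6$ and $\deg \mathcal{O}_C(2) = 12$ both exceed $2g-2 = 4$, the line bundles $\mathcal{O}_C(1)$ and $\mathcal{O}_C(2)$ are non-special, so $H^1(\mathcal{O}_C(1)) = H^1(\mathcal{O}_C(2)) = 0$. Combining this with $H^2 = 0$ for any sheaf on the curve $C$, a short cohomology chase through the two sequences yields first $H^1(K) = 0$ and then $H^1(N^\vee(5)) = 0$; $H^2(N^\vee(5))$ vanishes automatically, settling (a).

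For (b), the vanishing $H^1(K) = 0$ already shows that $H^0(\mathcal{O}_C(2))^{\oplus 4} \twoheadrightarrow H^0(N^\vee(5))$ is surjective. The ACM hypothesis on $C$ provides the surjection $H^0(\mathcal{O}_{\mathbb{P}^3}(2))^{\oplus 4} \twoheadrightarrow H^0(\mathcal{O}_C(2))^{\oplus 4}$, and the vanishing $H^1(\mathcal{O}_{\mathbb{P}^3}(1)) = 0$ applied to the twisted resolution gives $H^0(\mathcal{O}_{\mathbb{P}^3}(2))^{\oplus 4} \twoheadrightarrow H^0(I_C(5))$. A brief diagram chase in the resulting commutative square (whose two routes agree, both being induced by multiplication and restriction) forces $H^0(I_C(5)) \twoheadrightarrow H^0(N^\vee(5))$, proving (b) and thus $H^1(I_C^2(5)) = 0$.

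I do not anticipate a serious obstacle. The subtlest point is tracking that the Tor-term $\mathcal{T}$ is supported on $C$, so that it has no $H^2$; this is exactly what forces the cohomology of $K$ to collapse onto that of $\mathcal{O}_C(1)$, driving the whole argument.
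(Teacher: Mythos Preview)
Your argument is correct. Both proofs rest on the minimal resolution of Lemma~\ref{resolution}, but they deploy it differently. The paper tensors the resolution with $I_{C}(5)$, obtaining
\[
0 \to I_{C}(1)^{\oplus 3} \to I_{C}(2)^{\oplus 4} \to I_{C}\otimes I_{C}(5) \to 0
\]
(the Tor kernel dies because it is torsion inside a torsion-free sheaf), uses the resolution again to kill $H^{i}(I_{C}(1))$ and $H^{i}(I_{C}(2))$ for $i>0$, and then pushes the resulting vanishing of $H^{i}(I_{C}\otimes I_{C}(5))$ onto $H^{i}(I_{C}^{2}(5))$ via the natural surjection $I_{C}\otimes I_{C}(5)\twoheadrightarrow I_{C}^{2}(5)$. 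You instead tensor the resolution with $\mathcal{O}_{C}$, reducing to bundles on the curve, and use the embedding $I_{C}^{2}(5)\hookrightarrow I_{C}(5)$ with quotient $N^{\vee}(5)$; the price is the extra $H^{0}$-surjectivity step, which you handle via the commutative square. The paper's route is a touch shorter and avoids that step, while yours is more transparent about why the specific numerics matter (the non-specialty of $\mathcal{O}_{C}(1)$ and $\mathcal{O}_{C}(2)$ on a genus-$3$ curve). As a minor remark, in your square the surjectivity of the top arrow $H^{0}(\mathcal{O}(2))^{\oplus 4}\twoheadrightarrow H^{0}(I_{C}(5))$ is not actually needed: once the composite through $H^{0}(\mathcal{O}_{C}(2))^{\oplus 4}$ is onto $H^{0}(N^{\vee}(5))$, the factoring through $H^{0}(I_{C}(5))$ already forces the last map to be surjective.
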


 \begin{proof} 
Twisting the sequence 
\[0 \rightarrow I_{C} \rightarrow \mathcal{O}_{\mathbb{P}^{3}} \rightarrow \mathcal{O}_{C} \rightarrow 0
\]
by $I_{C}(5)$ yields the long exact sequence
\[0 \rightarrow \operatorname{Tor}^{\mathcal{O}_{\mathbb{P}^{3}}}_{1}(I_{C}(5),\mathcal{O}_{C}) \rightarrow I_{C}\otimes I_{C}(5) \rightarrow I_{C}(5) \rightarrow \mathcal{N}^{*}_{C|\mathbb{P}^{3}}(5) \rightarrow 0
\]
This can be broken into two short exact sequences, one of which is 
\[0 \rightarrow \operatorname{Tor}^{\mathcal{O}_{\mathbb{P}^{3}}}_{1}(I_{C}(5),\mathcal{O}_{C}) \rightarrow I_{C}\otimes I_{C}(5) \rightarrow I^{2}_{C}(5) \rightarrow 0.
\]
Since $\operatorname{Tor}^{\mathcal{O}_{\mathbb{P}^{3}}}_{1}(I_{C}(5),\mathcal{O}_{C})$ has at most 1-dimensional support, we have\\
$H^{i}(\operatorname{Tor}^{\mathcal{O}_{\mathbb{P}^{3}}}_{1}(I_{C}(5),\mathcal{O}_{C}))=0$ for all $i>1$. It then suffices to show the vanishing of $H^{i}(I_{C}\otimes I_{C}(5))$ for all $i>0$.\\
We know, by Lemma \ref{resolution}, that  $I_{C}$ has a minimal free resolution of the form 
\[0 \rightarrow \mathcal{O}_{\mathbb{P}^{3}}^{\oplus3}(-4) \rightarrow \mathcal{O}_{\mathbb{P}^{3}}^{\oplus4}(-3)\rightarrow I_{C} \rightarrow 0.
\]
As before, we consider the twist by $I_{C}(5)$. Then we have a long exact sequence
	\[0 \rightarrow \operatorname{Tor}^{\mathcal{O}_{\mathbb{P}^{3}}}_{1}(I_{C}, I_{C}(5)) \rightarrow I_{C}(1)^{\oplus3} \rightarrow I_{C}(2)^{\oplus4} \rightarrow I_{C}\otimes I_{C}(5) \rightarrow 0.
\]
Given that $\operatorname{Tor}^{\mathcal{O}_{\mathbb{P}^{3}}}_{1}(I_{C}, I_{C}(5))$ has at most 1-dimensional support and is a subsheaf of the torsion-free sheaf $I_{C}(1)^{\oplus3}$,  it is equal to 0; so we have
\[0 \rightarrow I_{C}(1)^{\oplus3} \rightarrow I_{C}(2)^{\oplus4} \rightarrow I_{C}\otimes I_{C}(5) \rightarrow 0
\]
But, we know that, by Lemma \ref{resolution}, $I_{C}$ has a minimal free resolution of the form 
\[0 \rightarrow \mathcal{O}_{\mathbb{P}^{3}}^{\oplus3}(-4) \rightarrow \mathcal{O}_{\mathbb{P}^{3}}^{\oplus4}(-3)\rightarrow I_{C} \rightarrow 0.
\]
So $H^{i}(I_{C}(k))=0$ for all $i>0$ and $k>0$. Then $H^{i}(I_{C}\otimes I_{C}(5))=0$ for all $i>0$; so the result follows.
 \end{proof}

\begin{theorem}\label{line bundles}
Suppose that $C$ is ACM. Then there are only two Ulrich line bundles $L_{1}$ and $L_{2}$, and they correspond to divisors $D_{1}=9\widetilde{h}-3e$ and $D_{2}=3\widetilde{h}$ on $Y$.
\end{theorem}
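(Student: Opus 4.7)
The plan is to combine Theorem \ref{possible Ulrich line bundles on Y} with an ACM verification via the Leray spectral sequence. Item (3) of Theorem \ref{possible Ulrich line bundles on Y} already shows that the only divisor classes $D = a\widetilde{h} - be$ on $Y$ for which $\mathcal{O}_Y(D)$ has the Hilbert polynomial $20\binom{t+3}{3}$ required of an Ulrich line bundle (by Proposition \ref{hilbert polynoial condition}) are $D_1 = 9\widetilde{h} - 3e$ and $D_2 = 3\widetilde{h}$. So it remains only to check that the corresponding line bundles $L_1, L_2$ are ACM. Notice that $L_2^\vee(3) = \mathcal{O}_Y(-3\widetilde{h} + 3H) = \mathcal{O}_Y(9\widetilde{h} - 3e) = L_1$, so by the Ulrich duality of Lemma \ref{Ulrich duailty} it is enough to prove that $L_2$ is Ulrich; Ulrich-ness of $L_1$ will then follow automatically.

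To verify that $L_2 = \mathcal{O}_Y(3\widetilde{h})$ is ACM, I will compute $H^i(L_2(t))$ for $i = 1,2$ and every integer $t$ using the Leray spectral sequence (Corollary \ref{Leray spectral sequence}) for the blow-down $f: Y \to \mathbb{P}^3$, together with the projection formula and the direct-image computations of Corollary \ref{direct image of exceptional divisor}. Writing $L_2(t) = \mathcal{O}_Y((3+4t)\widetilde{h} - te)$, I split into cases on the sign of $t$. For $t \geq 0$, Corollary \ref{direct image of exceptional divisor} gives $f_* L_2(t) = I_C^t(3+4t)$ with $R^q f_* L_2(t) = 0$ for $q \geq 1$, so Leray yields $H^i(L_2(t)) = H^i(\mathbb{P}^3, I_C^t(3+4t))$; since $3+4t \geq 3t$, this vanishes for $i \geq 1$ by Lemma \ref{ideal cohomology}. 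For $t < 0$, Serre duality together with $K_Y = -H$ rewrites
\[
H^i(L_2(t)) \cong H^{3-i}\bigl(Y,\, \mathcal{O}_Y((4s-3)\widetilde{h} - se)\bigr)^\vee,
\]
where $s = -t - 1 \geq 0$.

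When $s \geq 3$, the projection formula and Corollary \ref{direct image of exceptional divisor} reduce the right-hand side to $H^{3-i}(\mathbb{P}^3, I_C^s(4s-3))$, and since $4s - 3 \geq 3s$ this vanishes for $i = 1,2$ by Lemma \ref{ideal cohomology}. The three remaining subcases are treated individually. For $s = 0$, $\mathcal{O}_Y(-3\widetilde{h}) = f^*\mathcal{O}_{\mathbb{P}^3}(-3)$ has trivial cohomology on $\mathbb{P}^3$. For $s = 1$, I reduce to computing $H^i(\mathbb{P}^3, I_C(1))$, which vanishes by chasing the resolution of Lemma \ref{resolution} tensored with $\mathcal{O}(1)$ against the cohomology of $\mathcal{O}_{\mathbb{P}^3}(-2)$ and $\mathcal{O}_{\mathbb{P}^3}(-3)$. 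For $s = 2$, the required vanishing $H^i(I_C^2(5)) = 0$ for $i \geq 1$ is exactly the content of Proposition \ref{square of ideal}.

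The main obstacle is the borderline subcases $s = 1, 2$ in the $t < 0$ analysis, precisely where the inequality $k \geq 3s$ driving Lemma \ref{ideal cohomology} fails. That is why the more delicate Lemma \ref{resolution} and Proposition \ref{square of ideal} are needed; everything else is a direct spectral-sequence calculation. Once those vanishings are in place, $L_2$ is ACM, hence Ulrich, and $L_1$ is Ulrich by duality, completing the proof.
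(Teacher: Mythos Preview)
Your proposal is correct and follows essentially the same approach as the paper: Leray spectral sequence plus projection formula to reduce to cohomology of powers of $I_C$, with Lemma \ref{ideal cohomology} handling the generic twists and Lemma \ref{resolution} and Proposition \ref{square of ideal} handling the borderline cases. The only cosmetic difference is that you verify $L_2$ is ACM directly and deduce $L_1$ via Lemma \ref{Ulrich duailty}, whereas the paper does the reverse; since $L_1 = L_2^\vee(3)$, the two case analyses are Serre-dual to one another and invoke exactly the same auxiliary results.
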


 \begin{proof}
We will use Proposition \ref{hilbert polynoial condition} to show that $L_{1}$ and $L_{2}$ are Ulrich line bundles.
In Theorem \ref{possible Ulrich line bundles on Y}, we showed that $L_{1}$ and $L_{2}$ satisfy the Hilbert polynomial condition. So, it remains to show that $L_{1}$ and $L_{2}$
are ACM; i.e, to show that $H^{1}(Y,L_{1}(t))=H^{2}(Y,L_{1}(t))=0$ and $H^{1}(Y,L_{2}(t))=H^{2}(Y,L_{2}(t))=0$ for all $t\in \mathbb{Z}$.

Consider $L_{1}$ first.

\begin{itemize}
	\item$t\geq0$:
\end{itemize}
Then 
\begin{center}
$L_{1}(t)=\mathcal{O}_{Y}(9\widetilde{h}-3e+t(4\widetilde{h}-e))=\mathcal{O}_{Y}((4t+9)\widetilde{h}+(-t-3)e)$.
\end{center}
Since $-t-3 < 0$, by the projection formula and Corollary \ref{direct image of exceptional divisor}, we have
\begin{eqnarray*}
f_{\ast }L_{1}( t)&=&f_{\ast }\mathcal{O}_{Y}( ( 4t+9) \widetilde{h}%
+( -t-3) e)  \\
&=&\mathcal{O}_{\mathbb{P}^{3}}( 4t+9) \otimes f_{\ast }\mathcal{O}_{Y}( ( -t-3) e)\\
&=&I_{C}^{t+3}\otimes \mathcal{O}_{\mathbb{P}^{3}}( 4t+9)\\
&=&I_{C}^{t+3}( 4t+9).
\end{eqnarray*}
So $H^{1}(\mathbb{P}^{3},f_{\ast }L_{1}( t))=H^{1}(\mathbb{P}^{3},I_{C}^{t+3}( 4t+9))$ and it is $0$ by Lemma \ref{ideal cohomology}, since\\
 $4t+9 \geq 3(t+3)$.\\
Now we consider 
$H^{0}(\mathbb{P}^{3},R^{1}f_{\ast }L_{1}( t) ).$ By projection formula, we have
\begin{center}
 $H^{0}( \mathbb{P}^{3},R^{1}f_{\ast }L_{1}( t) )
 =H^{0}( \mathbb{P}^{3},R^{1}f_{\ast }\mathcal{O}_{\mathbb{P}^{3}}( ( -t-3)
 e) \otimes \mathcal{O}_{\mathbb{P}^{3}}( 4t+9)) $
\end{center}
But $R^{1}f_{\ast }\mathcal{O}_{\mathbb{P}^{3}}(( -t-3) e)=0$ by Corollary \ref{direct image of exceptional divisor}, since $-t-3\leq 0$; and so\\
$H^{0}(\mathbb{P}^{3},R^{1}f_{\ast }L_{1}( t) )=0$. Then, by Corollary \ref{Leray spectral sequence}, we have
\begin{center}
 $H^{1}(Y,L_{1}(t) ) =0.$
\end{center}
Now consider $H^{0}( \mathbb{P}^{3},R^{2}f_{\ast }L_{1}( t))$ and $H^{1}( \mathbb{P}^{3},R^{1}f_{\ast }L_{1}(t))$. Note that they are $0$ by Corollary \ref{direct image of exceptional divisor}. Also $H^{2}(\mathbb{P}^{3},f_{\ast }L_{1}( t))=H^{2}(\mathbb{P}^{3},I_{C}^{t}(4t+3))=0$ again by Lemma \ref{ideal cohomology}. Then, by Corollary \ref{Leray spectral sequence}, we have
\begin{center}
 $H^{2}( Y,L_{1}(t) ) =0$.
\end{center}
\begin{itemize}
	\item$t<-4$:
\end{itemize}
Then
\begin{eqnarray*}
H^{1}(Y,L_{1}(t)) &=& H^{2}(Y,L_{1}^{\vee}(-t)\otimes K_{Y})^{\vee}\\
&=& H^{2}(Y,\mathcal{O}_{Y} ((-4t-9)\widetilde{h}+(t+3)e)\otimes \mathcal{O}_{Y}(-4\widetilde{h}+e))^{\vee}\\
&=& H^{2}(Y,\mathcal{O}_{Y}((-4t-13)\widetilde{h}+(t+4)e))^{\vee}.\\
\end{eqnarray*}
Similarly,
\begin{eqnarray*}
H^{2}(Y,L_{1}(t)) &=& H^{1}(Y,\mathcal{O}_{Y}((-4t-13)\widetilde{h}+(t+4)e))^{\vee}.
\end{eqnarray*}
So, if $H^{i}(Y,\mathcal{O}_{Y}((-4t-13)\widetilde{h}+(t+4)e))$ for $i=1,2$ vanishes, the result will follow.\\
Since $t+4 < 0$, by the projection formula and Corollary \ref{direct image of exceptional divisor}, we have
\begin{eqnarray*}
f_{\ast }\mathcal{O}_{Y}((-4t-13)\widetilde{h}+(t+4)e))&=&\mathcal{O}_{\mathbb{P}^{3}}(-4t-13) \otimes f_{\ast }\mathcal{O}_{Y}((t+4)e)\\
&=&I_{C}^{-t-4}\otimes \mathcal{O}_{\mathbb{P}^{3}}(-4t-13)\\
&=&I_{C}^{-t-4}(-4t-13).
\end{eqnarray*}
So $H^{1}(\mathbb{P}^{3},f_{\ast }\mathcal{O}_{Y}((-4t-13)\widetilde{h}+(t+4)e))=H^{1}(\mathbb{P}^{3},I_{C}^{-t-4}(-4t-13))$ and it is $0$ by Lemma \ref{ideal cohomology}, since $-4t-13\geq 3(-t-4)$.\\
Now consider 
$H^{0}(\mathbb{P}^{3},R^{1}f_{\ast }\mathcal{O}_{Y}((-4t-13)\widetilde{h}+(t+4)e) ).$ By the projection formula, we have
\begin{center}
 $H^{0}( \mathbb{P}^{3},R^{1}f_{\ast }\mathcal{O}_{Y}((-4t-13)\widetilde{h}+(t+4)e))
 =H^{0}( \mathbb{P}^{3},R^{1}f_{\ast }\mathcal{O}_{\mathbb{P}^{3}}( (t+4)
 e) \otimes \mathcal{O}_{\mathbb{P}^{3}}(-4t-13)). $
\end{center}
But $R^{1}f_{\ast }\mathcal{O}_{\mathbb{P}^{3}}((t+4)e)=0$ by Corollary \ref{direct image of exceptional divisor}, since $t+4\leq0$. So, by Corollary \ref{Leray spectral sequence}, $H^{1}(Y,\mathcal{O}_{Y}((-4t-13)\widetilde{h}+(t+4)e)) =0.$
So,
	\[H^{2}(Y,L_{1}(t))=0.
\]
Now consider $H^{0}( \mathbb{P}^{3},R^{2}f_{\ast }\mathcal{O}_{Y}((-4t-13)\widetilde{h}+(t+4)e))$ and\\
 $H^{1}( \mathbb{P}^{3},R^{1}f_{\ast }\mathcal{O}_{Y}((-4t-13)\widetilde{h}+(t+4)e))$, and note that they are $0$ by Corollary \ref{direct image of exceptional divisor}.\\
Also $H^{2}(\mathbb{P}^{3},f_{\ast }\mathcal{O}_{Y}((-4t-13)\widetilde{h}+(t+4)e))=H^{2}(\mathbb{P}^{3},I_{C}^{-t-4}(-4t-13))=0$ again by Lemma \ref{ideal cohomology}. So, by Corollary \ref{Leray spectral sequence}, $H^{2}(Y,\mathcal{O}_{Y}((-4t-13)\widetilde{h}+(t+4)e))$ vanish. So
\begin{center}
 $H^{1}( Y,L_{1}(t) ) =0$.
\end{center}

\begin{itemize}
	\item $t=-4$:
\end{itemize}
Then
	\[H^{i}(Y,L_{1}(-4))=H^{i}(Y,\mathcal{O}_{Y}(-7\widetilde{h}+e)).
\]
Then by \cite[Lemma 1.4]{BEL91}, we have
\begin{eqnarray*}
H^{i}(Y,\mathcal{O}_{Y}(-7\widetilde{h}+e)) &=& H^{i}(\mathbb{P}^{3},\mathcal{O}_{\mathbb{P}^{3}}(7h)).
\end{eqnarray*}
So,
\begin{center}
	 $H^{1}(Y,L_{1}(-4))=H^{2}(Y,L_{1}(-4))=0$.
\end{center}
\begin{itemize}	
	\item $t=-3$:
\end{itemize}
Then
	\[H^{i}(Y,L_{1}(-3))=H^{i}(Y,\mathcal{O}_{Y}(-3\widetilde{h})).
\]
Then by \cite[Lemma 1.4]{BEL91}, we have
\begin{eqnarray*}
H^{i}(Y,\mathcal{O}_{Y}(-3\widetilde{h})) &=& H^{i}(\mathbb{P}^{3},\mathcal{O}_{\mathbb{P}^{3}}(3h)).
\end{eqnarray*}
So,
\begin{center}
	 $H^{1}(Y,L_{1}(-3))=H^{2}(Y,L_{1}(-3))=0$.
\end{center}
So far, we showed that $H^{1}(Y,L_{1}(t))=H^{2}(Y,L_{1}(t))=0$ for all $t$ except $t=-1,-2$.
For the remaining two values of $t$, we assume that $C$ is ACM.
\begin{itemize}
	\item $t=-1$:
\end{itemize}
Again by Corollary \ref{Leray spectral sequence}, if $H^{i}(\mathbb{P}^{3},f_{\ast }L_{1}(-1))$ for $i=1,2$,\\
$H^{j}(\mathbb{P}^{3},R^{1}f_{\ast }L_{1}(-1))$ for $j=0,1$ and $H^{0}(\mathbb{P}^{3},R^{2}f_{\ast }L_{1}(-1))$ vanishes,\\
 then $H^{1}( Y,L_{1}(-1) )$ and $H^{2}( Y,L_{1}(-1) )$ vanish.\\
Note that $H^{i}(\mathbb{P}^{3},f_{\ast }L_{1}( -1))=H^{i}(\mathbb{P}^{3},I_{C}^{2}( 5))$ for $i=1,2$ by the projection formula and Corollary \ref{direct image of exceptional divisor}; and we know that $H^{i}(\mathbb{P}^{3},I_{C}^{2}( 5))=0$ for $i=1,2$ by Proposition \ref{square of ideal}. Also, we know that $H^{j}(\mathbb{P}^{3},R^{1}f_{\ast }L_{1}(-1))=0$ for $j=0,1$ and\\
 $H^{0}(\mathbb{P}^{3},R^{2}f_{\ast }L_{1}(-1))=0$ by the projection formula and Corollary \ref{direct image of exceptional divisor}. So,
\begin{center}
	 $H^{1}(Y,L_{1}(-1))=H^{2}(Y,L_{1}(-1))=0$.
\end{center}
\begin{itemize}
	\item $t=-2$:
\end{itemize}
By Corollary \ref{Leray spectral sequence}, if all of $H^{i}(\mathbb{P}^{3},f_{\ast }L_{1}(-2))$ for $i=1,2$,\\
$H^{j}(\mathbb{P}^{3},R^{1}f_{\ast }L_{1}(-2))$ for $j=0,1$ and $H^{0}(\mathbb{P}^{3},R^{2}f_{\ast }L_{1}(-2))$ vanish,\\
then $H^{1}( Y,L_{1}(-2) )$ and $H^{2}( Y,L_{1}(-2) )$ vanish.\\
Note that $H^{i}(\mathbb{P}^{3},f_{\ast }L_{1}(-2))=H^{i}(\mathbb{P}^{3},I_{C}(1))$ for $i=1,2$ by the projection formula and Corollary \ref{direct image of exceptional divisor}. But $H^{i}(\mathbb{P}^{3},I_{C}(1))=0$ for $i=1,2$ since $C$ is ACM and by Lemma \ref{resolution} $I_{C}$ has a minimal free resolution
\[0 \rightarrow \mathcal{O}_{\mathbb{P}^{3}}^{\oplus3}(-4) \rightarrow \mathcal{O}_{\mathbb{P}^{3}}^{\oplus4}(-3)\rightarrow I_{C} \rightarrow 0.
\]
Also, we know that $H^{j}(\mathbb{P}^{3},R^{1}f_{\ast }L_{1}(-2))=0$ for $j=0,1$ and\\
 $H^{0}(\mathbb{P}^{3},R^{2}f_{\ast }L_{1}(-2))=0$ by projection formula and Corollary \ref{direct image of exceptional divisor}. So,
\begin{center}
	 $H^{1}(Y,L_{1}(-2))=H^{2}(Y,L_{1}(-2))=0$.
\end{center}
Hence $H^{1}(Y,L_{1}(t))=H^{2}(Y,L_{1}(t))=0$ for all $t\in \mathbb{Z}$, and the result follows for $L_{1}$.

Consider $L_{2}$ next.\\
$L_{2}$ is Ulrich by Lemma \ref{Ulrich duailty}, since 
\begin{center}
	$L_{1}^{\vee}(3)=(-(9)\widetilde{h}+3e)+3(4\widetilde{h}-e)=3\widetilde{h}=L_{2}.$
\end{center}
 \end{proof}

\begin{remark} \label{irreducibility of hilbert subscheme}
We know that $H_{6,3,3}$, which is the open subscheme of the Hilbert Scheme parametrizing the smooth irreducible curves of $d=6$ and $g=3$ in $\mathbb{P}^{3}$, is irreducible by \cite[Theorem 4]{E86}. Also, we know that the property of being an ACM sheaf is an open condition by \cite{CHGS12}. Hence, if we assume $C$ is ACM, then the line bundles $L_{1}$ and $L_{2}$ exist on a generic element of the deformation class $Y$.
\end{remark}

\section{Rank 2 Ulrich Bundles on $Y$}
Let $E$ be a vector bundle of rank r, and $L$ a line bundle on $X$. Then, by \cite[Ex. 3.2.2]{Ful84}, for all $p\geq0$,
  \[c_{p}(E \otimes L)=\sum^{p}_{i=0} {{r-i}\choose{p-i}}c_{i}(E) \cdot c_{1}^{p-i}(L).
\]
Then, if $E$ is a rank 2 vector bundle, we have
\begin{eqnarray*}
c_{1}(E\otimes\mathcal{O}_{X}(tH))&=&\sum^{1}_{i=0} {{2-i}\choose{1-i}}c_{i}(E) \cdot c_{1}^{1-i}(\mathcal{O}_{X}(tH))\\
&=&2c_{0}(E) \cdot c_{1}(\mathcal{O}_{X}(tH))+c_{1}(E)\\
&=&c_{1}(E)+2tH
\end{eqnarray*}
and
\begin{eqnarray*}
c_{2}(E\otimes \mathcal{O}_{X}(tH))&=&\sum^{2}_{i=0} {{2-i}\choose{2-i}}c_{i}(E) \cdot c_{1}^{2-i}(\mathcal{O}_{X}(tH))\\
&=&\sum^{2}_{i=0} c_{i}(E) \cdot c_{1}^{2-i}(\mathcal{O}_{X}(tH))\\
&=&c_{0}(E) \cdot c_{1}^{2}(\mathcal{O}_{X}(tH))+c_{1}(E) \cdot c_{1}(\mathcal{O}_{X}(tH))+c_{2}(E)\\
&=&(tH)^{2}+tc_{1}(E) \cdot H+c_{2}(E).
\end{eqnarray*}

\begin{theorem}\label{rank 2 Ulrich bundle's chern relations}
Let $(\widetilde{Y},H)$ be a Fano threefold which is the blow-up of $\mathbb{P}^{3}$ along a smooth, irreducible curve of degree $d$ and genus $g$. If $\mathcal{E}$ is a rank 2 Ulrich bundle on $\widetilde{Y}$, then we have
\begin{enumerate}
	\item $H^{2} \cdot c_{1}(\mathcal{E})=3H^{3}$,
	\item $H \cdot c_{2}(\mathcal{E})=\frac{1}{2}H \cdot c_{1}^{2}(\mathcal{E})-2H^{3}+4$,
	\item $2c^{3}_{1}(\mathcal{E})-6c_{1}(\mathcal{E}) \cdot c_{2}(\mathcal{E})+c_{1}(\mathcal{E}) \cdot c_{2}(K_{\widetilde{Y}})=9H^{3}$.
\end{enumerate}
\end{theorem}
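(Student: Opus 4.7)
The plan is to derive the three relations by computing $\chi(\mathcal{E}(t))$ via Hirzebruch--Riemann--Roch and then imposing the Ulrich condition from Proposition \ref{hilbert polynoial condition}, which forces $\chi(\mathcal{E}(t))=2H^{3}\binom{t+3}{3}=\tfrac{H^{3}}{3}(t+1)(t+2)(t+3)$. Writing $c_{i}=c_{i}(\mathcal{E})$ and $\tau_{i}=c_{i}(T_{\widetilde{Y}})$, the rank $2$ Chern character is $\mathrm{ch}(\mathcal{E})=2+c_{1}+\tfrac{1}{2}(c_{1}^{2}-2c_{2})+\tfrac{1}{6}(c_{1}^{3}-3c_{1}c_{2})$. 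Multiplying by $e^{tH}$, pairing with $\mathrm{td}(T_{\widetilde{Y}})=1+\tfrac{1}{2}\tau_{1}+\tfrac{1}{12}(\tau_{1}^{2}+\tau_{2})+\tfrac{1}{24}\tau_{1}\tau_{2}$, and extracting the degree $3$ part yields $\chi(\mathcal{E}(t))$ as an explicit polynomial of degree $3$ in $t$ whose coefficients are intersection numbers involving $H$, $c_{1}$, $c_{2}$, $\tau_{1}$, $\tau_{2}$.

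Matching the two cubics coefficient by coefficient, the $t^{3}$ identity is tautological (it merely encodes the rank). For the remaining coefficients I would use the Fano condition $\tau_{1}=-K_{\widetilde{Y}}=H$. Substituting $\tau_{1}=H$ into the $t^{2}$-equation collapses it immediately to relation (1). Feeding (1) back into the $t^{1}$-equation and using the numerical input $H\cdot\tau_{2}=24$ (see below) produces relation (2). Finally, the $t^{0}$-equation, simplified by successively invoking (1), (2) and once more $H\cdot\tau_{2}=24$, reduces to relation (3); this is the step at which the second Chern class of $T_{\widetilde{Y}}$ enters the statement.

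The only ingredient beyond the general Hirzebruch--Riemann--Roch bookkeeping is the value $H\cdot c_{2}(T_{\widetilde{Y}})=24$. Using $\tau_{2}=(6+d)\widetilde{h}^{2}-4\widetilde{h}e$, $H=4\widetilde{h}-e$ and the intersection numbers $\widetilde{h}^{3}=1$, $\widetilde{h}^{2}e=0$, $\widetilde{h}e^{2}=-d$ already recorded in the proof of Theorem \ref{hilbert polnomial of line bundle}, a direct expansion of $H\cdot\tau_{2}=4(6+d)\widetilde{h}^{3}-(22+d)\widetilde{h}^{2}e+4\widetilde{h}e^{2}$ shows that the $d$-dependent contributions cancel and that $H\cdot\tau_{2}=24$ independently of $(d,g)$. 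The only real difficulty is therefore the bookkeeping of many cross-terms in the Riemann--Roch expansion and the clean substitution of each newly obtained relation into the next; there is no geometric subtlety, since the assertion is essentially a translation of the Ulrich numerical condition into relations among the Chern classes of $\mathcal{E}$ and of $T_{\widetilde{Y}}$.
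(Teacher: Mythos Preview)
Your proposal is correct and follows essentially the same approach as the paper: compute $\chi(\mathcal{E}(t))$ by Hirzebruch--Riemann--Roch, equate it with the Ulrich Hilbert polynomial $2H^{3}\binom{t+3}{3}$, and read off relations (1)--(3) from the coefficients of $t^{2}$, $t^{1}$, $t^{0}$ respectively, using $\tau_{1}=H$ and $H\cdot\tau_{2}=24$. If anything, your write-up is more explicit than the paper's at one point: the paper passes from $H\cdot c_{2}=\tfrac{1}{2}H\cdot c_{1}^{2}-2H^{3}+\tfrac{1}{6}H\cdot d_{2}$ directly to $H\cdot c_{2}=\tfrac{1}{2}H\cdot c_{1}^{2}-2H^{3}+4$ without spelling out the computation $H\cdot c_{2}(T_{\widetilde{Y}})=24$, whereas you verify this identity via the intersection numbers from Theorem~\ref{hilbert polnomial of line bundle}.
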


 \begin{proof}
Let $c_{i}=c_{i}(\mathcal{E})$ and $d_{i}=c_{i}(\mathcal{K}_{\widetilde{Y}})$. Then, by Riemann-Roch theorem
\begin{eqnarray*}
\chi(\widetilde{Y},\mathcal{E}(t))&=&\ \ \frac{1}{6}[(2tH+c_{1})^{3}-3(2tH+c_{1})((tH)^{2}+tc_{1} \cdot H+c_{2})]\\
&\ &+\frac{1}{4}H[(2tH+c_{1})^{2}-2((tH)^{2}+tc_{1} \cdot H+c_{2})]\\
&\ &+\frac{1}{12}(H^{2}+d_{2})(2tH+c_{1})+\frac{1}{12}Hd_{2}\\
&=&\ \ \frac{1}{6}(8H^{3}t^{3}+12H^{2} \cdot c_{1}t^{2}+6H \cdot c^{2}_{1}t+c^{3}_{1}-6H^{3}t^{3}-6H^{2} \cdot c_{1}t^{2}\\
&\ &\ \ \ \ -6H \cdot c_{2}t-3H^{2} \cdot c_{1}t^{2}-3H \cdot c^{2}_{1}t-3c_{1} \cdot c_{2})\\
&\ &+\frac{1}{4}(4H^{3}t^{2}+4H^{2} \cdot c_{1}t+H \cdot c^{2}_{1}-2H^{3}t^{2}-2H^{2} \cdot c_{1}t-2H \cdot c_{2})\\
&\ &+\frac{1}{12}(2H^{3}t+H^{2} \cdot c_{1}+2H \cdot d_{2}t+c_{1} \cdot d_{2})+\frac{1}{12}(H \cdot d_{2})\\
&=&\ \ \frac{1}{3}H^{3}t^{3}+(\frac{1}{2}H^{2} \cdot c_{1}+\frac{1}{2}H^{3})t^{2}\\
&\ &+(\frac{1}{2}H \cdot c_{1}^{2}-H \cdot c_{2}+\frac{1}{2}H^{2} \cdot c_{1}+\frac{1}{6}H^{3}+\frac{1}{6}H \cdot d_{2})t\\
&\ & +(\frac{1}{6} c_{1}^{3}-\frac{1}{2}c_{1} \cdot c_{2}+\frac{1}{4}H \cdot c_{1}^{2}-\frac{1}{2}H \cdot c_{2}+\frac{1}{12} H^{2} \cdot c_{1}\\
&\ &\ \ \ \ +\frac{1}{12} c_{1} \cdot d_{2}+\frac{1}{12} H \cdot d_{2}).
\end{eqnarray*}
Since $\mathcal{E}$ is a rank 2 Ulrich bundle, by Proposition \ref{hilbert polynoial condition}, we have
	\[\chi(\widetilde{Y},\mathcal{E}(t))=2H^{3}{{t+3}\choose{3}}=H^{3}\frac{(t^{3}+6t^{2}+11t+6)}{3}.
\]
So, if we equate coefficients of $t^{2}$, we get 
	\begin {eqnarray*} 
\frac{1}{2}H^{2} \cdot c_{1}+\frac{1}{2}H^{3}=2H^{3}
\end {eqnarray*}
\begin{eqnarray*}
\Rightarrow H^{2} \cdot c_{1}=3H^{3}.
\end {eqnarray*}
If we equate coefficients of $t$, we get
 	\begin {eqnarray*} 
\frac{1}{2}H \cdot c_{1}^{2}-H \cdot c_{2}+\frac{1}{2}H^{2} \cdot c_{1}+\frac{1}{6}H^{3}+\frac{1}{6}H \cdot d_{2}=\frac{11}{3}H^{3}
\end {eqnarray*}
\begin{eqnarray*}
&\Rightarrow&H \cdot c_{2}=\frac{1}{2}H \cdot c_{1}^{2}-2H^{3}+\frac{1}{6}H \cdot d_{2}\ \ \ \ \ \ \ \ \ \ \ \ \ \ \ \ \ \ \ \ \ \ \ \ \ \ \ \ \ \ \ \ \ \ \ (by\ part\ (1))\\
&\Rightarrow&H \cdot c_{2}=\frac{1}{2}H \cdot c_{1}^{2}-2H^{3}+4.
\end {eqnarray*}
If we equate constant terms, we get
	\begin {eqnarray*} 
\frac{1}{6}c_{1}^{3}-\frac{1}{2}c_{1} \cdot c_{2}+\frac{1}{4}H \cdot c_{1}^{2}-\frac{1}{2}H \cdot c_{2}+\frac{1}{12}H^{2} \cdot c_{1}+\frac{1}{12}c_{1} \cdot d_{2}+\frac{1}{12}H \cdot d_{2}=2H^{3}
\end {eqnarray*}
\begin{eqnarray*}
&\Rightarrow&2c^{3}_{1}-6c_{1} \cdot c_{2}+c_{1} \cdot d_{2}=9H^{3}.
\end {eqnarray*}
 \end{proof}

\begin{theorem}
Let $\mathcal{E}$ be a rank 2 Ulrich bundle on $Y$ with $c_{1}(Y)=x\widetilde{h}-ye$. Then there are 7 possibilities for $c_{1}(Y)$, which are
\begin{itemize}
	\item 6$\widetilde{h}$,
	\item 8$\widetilde{h}$-e,
	\item 10$\widetilde{h}$-2e,
	\item 12$\widetilde{h}$-3e=3$H$,
	\item 14$\widetilde{h}$-4e,
	\item 16$\widetilde{h}$-5e,
	\item 18$\widetilde{h}$-6e.
\end{itemize}
 
\end{theorem}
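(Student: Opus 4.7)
The plan is to fix $c_1(\mathcal{E}) = x\widetilde{h} - ye$ with $x,y \in \mathbb{Z}$ and narrow down the admissible pairs $(x,y)$ by combining the Chern-class identities of Theorem \ref{rank 2 Ulrich bundle's chern relations} with the Bogomolov inequality for $\mu$-semistable rank $2$ bundles.

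First, I would pin down $x$ in terms of $y$. From the proof of Theorem \ref{hilbert polnomial of line bundle} specialized to $d=6$, $g=3$, the intersection numbers on $Y$ are $\widetilde{h}^3 = 1$, $\widetilde{h}^2 e = 0$, $\widetilde{h} e^2 = -6$, $e^3 = -28$, and $H = 4\widetilde{h}-e$ gives $H^3 = 20$. A direct expansion yields $H^2 \cdot c_1(\mathcal{E}) = 10x - 20y$, so relation $(1)$ of Theorem \ref{rank 2 Ulrich bundle's chern relations} forces $10x - 20y = 3\cdot 20$, that is, $x = 2y + 6$. Thus $c_1(\mathcal{E}) = (2y+6)\widetilde{h} - y e$ for some $y \in \mathbb{Z}$.

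Next, I would impose Bogomolov's inequality. Every Ulrich bundle is semistable and hence $\mu$-semistable by Lemma \ref{stability lemma}; Bogomolov's theorem then gives
\[
\bigl(4c_2(\mathcal{E}) - c_1(\mathcal{E})^2\bigr) \cdot H \;\geq\; 0.
\]
Substituting relation $(2)$ of Theorem \ref{rank 2 Ulrich bundle's chern relations}, which reads $H \cdot c_2(\mathcal{E}) = \tfrac{1}{2} H \cdot c_1(\mathcal{E})^2 - 2H^3 + 4 = \tfrac{1}{2} H \cdot c_1(\mathcal{E})^2 - 36$, converts this into the single numerical bound $H \cdot c_1(\mathcal{E})^2 \geq 144$.

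Finally, a mechanical expansion in the basis $\widetilde{h}, e$ using the intersection table above produces
\[
H \cdot c_1(\mathcal{E})^2 \;=\; 4\bigl(x^2 - 3xy + y^2\bigr),
\]
which, after plugging in $x = 2y+6$, simplifies to $-4y^2 + 24y + 144$. The Bogomolov bound then reduces to $-4y^2 + 24y \geq 0$, i.e.\ $y(y-6) \leq 0$, so $0 \leq y \leq 6$. These seven values of $y$ give exactly the seven divisor classes listed, the central one $y=3$ being $12\widetilde{h} - 3e = 3H$. The main obstacle is recognizing that the three Chern identities of Theorem \ref{rank 2 Ulrich bundle's chern relations} by themselves leave a one-parameter family of candidate classes; the additional positivity supplied by Bogomolov, applicable because Ulrich bundles are semistable, is what cuts the integer parameter down to the finite range $\{0, 1, \ldots, 6\}$. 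Everything else is intersection-theoretic bookkeeping.
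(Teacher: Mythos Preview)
Your proof is correct and follows essentially the same route as the paper: apply relation~(1) of Theorem~\ref{rank 2 Ulrich bundle's chern relations} to obtain $x=2y+6$, then invoke $\mu$-semistability of Ulrich bundles to apply Bogomolov's inequality, substitute relation~(2) to reduce to a quadratic inequality in~$y$, and conclude $0\le y\le 6$. The only differences are cosmetic (you factor $H\cdot c_1^2$ as $4(x^2-3xy+y^2)$ and phrase the Bogomolov step as $H\cdot c_1^2\ge 144$, whereas the paper expands term by term), so the arguments are the same.
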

 \begin{proof}
We know that $H_{Y}=4\widetilde{h}-e$. By Theorem \ref{rank 2 Ulrich bundle's chern relations}
\begin{eqnarray*}
&&(4\widetilde{h}-e)^{2}(x\widetilde{h}-ye)=3(4\widetilde{h}-e)^{3}\\
&\Rightarrow&16x+8y(-6)+x(-6)-y(-28)=3.20\ \ \ \ \ \ \ \ \ \ \ \ \ \ \ \ \ \ \ \ \ \ \ \ \ (Theorem\ \ref{hilbert polnomial of line bundle})\\
&\Rightarrow&x=2y+6.
\end{eqnarray*}
Since $\mathcal{E}$ is Ulrich, it is $\mu$-semistable by Theorem \ref{stability for Ulrich}. So, we can apply Bogomolov's Inequality \cite[Theorem 7.3.1]{HL10} and get 
\begin{eqnarray*}
&&(2.2c_{2}(\mathcal{E})-(2-1)c_{1}^{2}(\mathcal{E}))H\geq 0\\
&\Rightarrow&4Hc_{2}(\mathcal{E})-Hc_{1}^{2}(\mathcal{E})\geq 0\\
&\Rightarrow&4(H\frac{c_{1}^{2}(\mathcal{E})}{2}-2H^{3}+4)-Hc_{1}^{2}(\mathcal{E})\geq 0\ \ \ \ \ \ \ \ \ \ \ \ \ \ \ \ \ \ \ \ \ \ \ \ \ \ \ \ \ (Theorem \ \ref{rank 2 Ulrich bundle's chern relations})\\
&\Rightarrow&Hc_{1}^{2}(\mathcal{E})-8H^{3}+16\geq 0\\
&\Rightarrow&(4\widetilde{h}-e)(x\widetilde{h}-ye)^{2}-8.20+16\geq 0\\
&\Rightarrow&4x^{2}+4y^{2}(-6)+2xy(-6)-y^{2}(-28)-144\geq 0\ \ \ \ \ \ \ \ \ \ \ \ \ \ \ (Theorem\ \ref{hilbert polnomial of line bundle})\\
&\Rightarrow&4(2y+6)^{2}-24y^{2}-12y(2y+6)+28y^{2}-144\geq 0\\
&\Rightarrow&-4y^{2}+24y\geq0\\
&\Rightarrow&0 \leq y \leq 6.
\end{eqnarray*}
 \end{proof}

\subsection{Simple Ulrich Bundles on $Y$ with $c_{1}=3H$}

\begin{proposition}\label{upper semicont}
Let $X$ be projective variety of dimension $k$ in $\mathbb{P}^{N}$ and $I_{X}$ be the ideal sheaf of $X$ in $\mathbb{P}^{N}$. Then $H^{i}(\mathbb{P}^{N},I^{n}_{X}(t))$ is upper semi-continuous for $i>k$.
\end{proposition}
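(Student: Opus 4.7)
My plan is to interpret the proposition in the natural setting of a flat family of subvarieties and to deduce the claim from the semi-continuity theorem. I would let $\pi : \mathcal{X} \subset \mathbb{P}^{N}_{S} \to S$ be a flat family of projective varieties of dimension $k$ in $\mathbb{P}^{N}$ with relative ideal sheaf $\mathcal{I} \subset \mathcal{O}_{\mathbb{P}^{N}_{S}}$, and aim to show that $s \mapsto h^{i}(\mathbb{P}^{N}, I_{X_{s}}^{n}(t))$ is upper semi-continuous on $S$ for $i > k$. This will follow from Hartshorne's semi-continuity theorem (III.12.8) once one knows that the coherent sheaf $\mathcal{I}^{n}(t)$ on $\mathbb{P}^{N}_{S}$ is flat over $S$ and that its formation commutes with base change.

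The key step, and the main obstacle, is to establish this flatness. I would argue inductively via the $\mathcal{I}$-adic filtration: the graded pieces $\mathcal{I}^{j}/\mathcal{I}^{j+1}$ are canonically identified with $\mathrm{Sym}^{j}(\mathcal{I}/\mathcal{I}^{2}) = \mathrm{Sym}^{j}(\mathcal{N}^{*})$. In the smooth case $\mathcal{N}^{*}$ is a locally free sheaf on $\mathcal{X}$, so each graded piece is locally free on $\mathcal{X}$ and hence flat over $S$. A straightforward induction on $j$ using the short exact sequences
\[
0 \to \mathcal{I}^{j}/\mathcal{I}^{j+1} \to \mathcal{O}/\mathcal{I}^{j+1} \to \mathcal{O}/\mathcal{I}^{j} \to 0
\]
then yields flatness of $\mathcal{O}_{\mathbb{P}^{N}_{S}}/\mathcal{I}^{n}$. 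From the short exact sequence $0 \to \mathcal{I}^{n} \to \mathcal{O}_{\mathbb{P}^{N}_{S}} \to \mathcal{O}/\mathcal{I}^{n} \to 0$, combined with the long $\mathrm{Tor}$ sequence, $\mathcal{I}^{n}$ is then flat as well, since the kernel of a surjection between flat sheaves with flat quotient is flat.

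The hypothesis $i > k$ has a transparent role. By Grothendieck vanishing applied to $\mathcal{O}_{\mathbb{P}^{N}}/I_{X}^{n}$, which is supported on $X$ of dimension $k$, one has $H^{i}(\mathbb{P}^{N}, \mathcal{O}/I_{X}^{n}(t)) = 0$ for $i > k$; this plugs into the long exact sequence of $0 \to I_{X}^{n} \to \mathcal{O}_{\mathbb{P}^{N}} \to \mathcal{O}/I_{X}^{n} \to 0$ and expresses $h^{i}(I_{X}^{n}(t))$ in terms of $h^{i-1}$ of the quotient and the constant quantities $h^{i-1}(\mathcal{O}_{\mathbb{P}^{N}}(t))$, $h^{i}(\mathcal{O}_{\mathbb{P}^{N}}(t))$. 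Thus in this cohomological range it actually suffices to have flatness of $\mathcal{O}/\mathcal{I}^{n}$ rather than of $\mathcal{I}^{n}$ itself, which gives some flexibility when the family is not smooth. In the context of this paper $X$ is always a smooth irreducible curve, so the smoothness hypothesis used above is automatic and both routes apply cleanly.
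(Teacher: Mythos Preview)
Your argument is correct and takes a genuinely different route from the paper's. The paper does not establish flatness of $\mathcal{I}^{n}$ via the conormal filtration; instead it compares $I_{X}^{n}$ with the tensor power $I_{X}^{\otimes n}$ through the natural surjection $I_{X}^{\otimes n}(t)\twoheadrightarrow I_{X}^{n}(t)$, whose kernel is the sheaf $\operatorname{Tor}_{1}^{\mathcal{O}_{\mathbb{P}^{N}}}(I_{X}^{\otimes n-1}(t),\mathcal{O}_{X})$ supported on $X$. Grothendieck vanishing for this kernel (dimension $\le k$) gives $H^{i}(I_{X}^{\otimes n}(t))\cong H^{i}(I_{X}^{n}(t))$ for $i>k$, and the paper then invokes upper semi-continuity for the tensor power. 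Your approach, by contrast, proves flatness of $\mathcal{O}/\mathcal{I}^{n}$ directly from the graded pieces $\operatorname{Sym}^{j}(\mathcal{N}^{*})$ and applies the semi-continuity theorem; the hypothesis $i>k$ then appears only as a way to bypass flatness of $\mathcal{I}^{n}$ itself if one wishes. What you gain is an explicit verification of flatness and of base change for the fibres $I_{X_{s}}^{n}$, at the cost of using smoothness of the fibres (harmless here, since the paper only applies the result to smooth curves in $\mathbb{P}^{3}$). What the paper's argument gains is brevity: it reduces everything to a single Tor computation and a one-line appeal to semi-continuity for $I_{X}^{\otimes n}$, though it does not spell out why the latter holds.
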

 \begin{proof}
Twisting the sequence 
\[0 \rightarrow I_{X} \rightarrow \mathcal{O}_{\mathbb{P}^{N}} \rightarrow \mathcal{O}_{X} \rightarrow 0
\]
by $I_{X}^{\otimes n-1}(t)$ yields the long exact sequence
\[0 \rightarrow \operatorname{Tor}^{\mathcal{O}_{\mathbb{P}^{N}}}_{1}(I_{X}^{\otimes n-1}(t),\mathcal{O}_{X}) \rightarrow I_{X}^{\otimes n}(t) \rightarrow I_{X}^{\otimes n-1}(t) \rightarrow \mathcal{O}_{X} \otimes I_{X}^{\otimes n-1}(t) \rightarrow 0
\]
This can be broken into two short exact sequences, one of which is 
\[0 \rightarrow \operatorname{Tor}^{\mathcal{O}_{\mathbb{P}^{N}}}_{1}(I_{X}^{\otimes n-1}(t),\mathcal{O}_{X}) \rightarrow I_{X}^{\otimes n}(t) \rightarrow I^{n}_{X}(t) \rightarrow 0.
\]
Since $\operatorname{Tor}^{\mathcal{O}_{\mathbb{P}^{N}}}_{1}(I_{X}^{\otimes n-1}(t),\mathcal{O}_{X})$ has at most k-dimensional support, we have\\
$H^{i}(\mathbb{P}^{N},\operatorname{Tor}^{\mathcal{O}_{\mathbb{P}^{N}}}_{1}(I_{X}^{\otimes n-1}(t),\mathcal{O}_{X}))=0$ for all $i>k$. So, by long exact sequence of cohomology, we get $H^{i}(\mathbb{P}^{N},I_{X}^{\otimes n}(t))=H^{i}(\mathbb{P}^{N},I^{n}_{X}(t))$ for all $i>k$. Since left hand side is upper semi-continuous, the right hand side is upper semi-continuous.
 \end{proof}
\begin{theorem}\label{macaulay}
Let $C$ be an smooth ACM space curve with $d=6$ and $g=3$. Then $h^{2}(\mathbb{P}^{3},I^{3}_{C}(6))=0$ and $h^{2}(\mathbb{P}^{3},I^{2}_{C}(2))\leq 8$ for a generic such $C$.
\end{theorem}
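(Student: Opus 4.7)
The plan is to combine the upper semi-continuity statement of Proposition \ref{upper semicont} with an explicit machine computation on a single curve. Since a smooth ACM space curve has dimension $k=1$, Proposition \ref{upper semicont} tells us that for each fixed pair $(n,t)$ the function
\[
[C] \longmapsto h^{2}\bigl(\mathbb{P}^{3}, I_{C}^{\,n}(t)\bigr)
\]
is upper semi-continuous on the family of such curves. Thus, to establish $h^{2}(\mathbb{P}^{3}, I_{C}^{3}(6))=0$ and $h^{2}(\mathbb{P}^{3}, I_{C}^{2}(2))\leq 8$ generically, it suffices to exhibit a \emph{single} smooth ACM curve $C_{0}$ of degree $6$ and genus $3$ for which both vanishings (respectively bounds) hold. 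Combined with the irreducibility of the Hilbert scheme component (Remark \ref{irreducibility of hilbert subscheme}) and openness of the ACM locus, this yields the generic conclusion.

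The first step is to produce $C_{0}$. For this I would call the \texttt{RandomSpaceCurve} package of \texttt{Macaulay2} mentioned in the introduction: generate a random curve of degree $6$ and genus $3$ in $\mathbb{P}^{3}$ from the Hilbert scheme component consisting of ACM curves (whose resolution is governed by Lemma \ref{resolution}), and verify smoothness and the ACM property by checking singular locus and the vanishing of $H^{1}(I_{C_{0}}(k))$ for the relevant twists $k$. The output is a curve given by an explicit ideal $I_{C_{0}} \subset \mathbb{K}[x_{0},x_{1},x_{2},x_{3}]$ generated by four cubics with three linear syzygies, matching Lemma \ref{resolution}.

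The second step is the cohomology computation itself. Having $I_{C_{0}}$ as an explicit ideal, I would form the symbolic powers $I_{C_{0}}^{2}$ and $I_{C_{0}}^{3}$ (or rather the ordinary powers, whose comparison with the sheaf powers is already built into the proof of Proposition \ref{square of ideal}) in \texttt{Macaulay2}, and use the built-in \texttt{HH} routine, or equivalently local duality via \texttt{Ext}, to compute $h^{2}(\mathbb{P}^{3}, I_{C_{0}}^{3}(6))$ and $h^{2}(\mathbb{P}^{3}, I_{C_{0}}^{2}(2))$. The expected output is $0$ and a value at most $8$, respectively.

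The third step is merely to assemble the pieces: by Proposition \ref{upper semicont} the loci
\[
\{\,[C]:h^{2}(I_{C}^{3}(6))=0\,\}\qquad \text{and}\qquad \{\,[C]:h^{2}(I_{C}^{2}(2))\leq 8\,\}
\]
are open, nonempty by the \texttt{Macaulay2} witness $C_{0}$, and hence contain a dense open subset of the irreducible component $H_{6,3,3}$ from Remark \ref{irreducibility of hilbert subscheme}. Their intersection is therefore also a nonempty open set, giving the required statement for generic $C$. The main obstacle is purely computational: one needs $I_{C_{0}}^{3}$ to be tractable enough that \texttt{Macaulay2} completes the cohomology computation in reasonable time and memory, and one must make sure the random curve produced genuinely lies in the ACM component rather than in some other component of the Hilbert scheme.
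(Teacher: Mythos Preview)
Your proposal is correct and follows essentially the same approach as the paper: produce a single explicit smooth ACM curve of degree $6$ and genus $3$ via the \texttt{RandomSpaceCurves} package in \texttt{Macaulay2}, compute the two cohomology groups directly, and then invoke Proposition~\ref{upper semicont} together with the irreducibility of $H_{6,3,3}$ to pass from the witness to the generic statement. The only detail you omit is that the paper runs the computation over a finite field $\mathbb{Z}/32467$, but otherwise the strategy is identical.
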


 \begin{proof}
Use \textit{Macaulay2} \cite{mac} for computations: 
{\small
\begin{verbatim}
     i1 : k = ZZ/32467; R = k[x,y,z,w];
\end{verbatim}}
Then load the package \textit{RandomSpaceCurves} \cite{macp} to produce explicit example of smooth ACM space curve $C^{'}$ of $d=6$ and $g=3$ with ideal $J$:
{\small
\begin{verbatim}
     i2 : load"RandomSpaceCurves.m2";
     i3 : J=(random spaceCurve)(6,3,R)
     o3 : ideal (-2215x^3+10620x^2y+2508xy^2-15048y^3-5453x^2z-2767xyz
             +8885y^2z+2225xz^2+1759yz^2-9499z^3+3014x^2w+12412xyw
             -1419y^2w-11910xzw-3506yzw-831z^2w-1546xw^2+4414yw^2
             -10576zw^2+15249w^3, -6292x^3-10864x^2y+5626xy^2-8024y^3
             +10837x^2z-6966xyz+9956y^2z-9501xz^2-9538yz^2+9745z^3
             +15655x^2w-3220xyw-12116y^2w+11148xzw-3392yzw-1539z^2w
             -3915xw^2-5992yw^2+15589zw^2+7309w^3, 870x^3+9582x^2y
             -172xy^2+8082y^3-13952x^2z+1923xyz+13352y^2z+7141xz^2
             -13354yz^2+15747z^3+1042x^2w+1494xyw-11584y^2w+7730xzw
             -4628yzw+9837z^2w-4220xw^2+4893yw^2-15379zw^2-13719w^3, 
             -15941x^3-8361x^2y-16223xy^2+12866y^3-4501x^2z+13591xyz
             -11196y^2z-6043xz^2-7842yz^2+11284z^3+1057x^2w-2552xyw
             +6508y^2w+15994xzw-2374yzw-10280z^2w+7766xw^2+15317yw^2
             -10555zw^2+7241w^3)
     o3 : Ideal of R 
\end{verbatim}}
Then check whether $C^{'}$ is a smooth ACM space curve of $d=6$ and $g=3$:
{\small
\begin{verbatim}
     i4 : (degree J, genus J, resolution J)
     o4 : (6, 3, R^1 <-- R^4 <-- R^3 <-- O)
                 0       1       2       3
     o4 : Sequence
\end{verbatim}}
Then compute $h^{2}(J^{3}_{C^{'}}(6))$ and $h^{2}(J^{2}_{C^{'}}(2))$:
{\small
\begin{verbatim}
     i5 : J3 = J*J*J;
          J2 = J*J; 
          vJ3 = Proj(R/J3); 
          vJ2 = Proj(R/J2); 
          sJ3 = sheaf module ideal vJ3;
          sJ2 = sheaf module ideal vJ2;
     o5 : Ideal of R
     o6 : Ideal of R
     i11: (HH^2 (sJ3(6)), HH^2 (sJ2(2)))
     o11: (0, k^8)
     o11: Sequence
\end{verbatim}}
But, we know that these cohomologies are upper semi-continuous functions by Proposition \ref{upper semicont}. Hence, we have $h^{2}(\mathbb{P}^{3},I^{3}_{C}(6))=0$ and $h^{2}(\mathbb{P}^{3},I^{2}_{C}(2))\leq 8$ for a generic element of all smooth ACM space curves of $d=6$ and $g=3$.	

 \end{proof}

\begin{remark} \label{open condition}
Since cohomology is an upper semi-continuous function, as stated in the proof of Theorem \ref{macaulay}, smooth ACM space curves of $d=6$ and $g=3$ satisfying $h^{2}(I^{3}_{C}(6))=0$ form an open subset of all smooth ACM space curves of $d=6$ and $g=3$. Also by Remark \ref{irreducibility of hilbert subscheme}, we know that $H_{6,3,3}$ is irreducible and smooth ACM space curves of $d=6$ and $g=3$ form an open subset in $H_{6,3,3}$. So, smooth ACM space curves of $d=6$ and $g=3$ satisfying $h^{2}(\mathbb{P}^{3},I^{3}_{C}(6))=0$ form an open subset of all smooth space curves of $d=6$ and $g=3$. Hence, $h^{2}(\mathbb{P}^{3},I^{3}_{C}(6))=0$ for a generic element of the deformation class $Y$. By a similar argument, $h^{2}(\mathbb{P}^{3},I^{2}_{C}(2))\leq 8$ for a generic element of the deformation class $Y$.
\end{remark}
\begin{corollary} \label{extension of line bundles}
For a generic element of the deformation class of $Y$, we have $ext^{1}(L_{2},L_{1})=8$.
\end{corollary}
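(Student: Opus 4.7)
The plan is to translate the Ext group into a cohomology group on $\mathbb{P}^3$ and then extract the answer from an Euler characteristic calculation together with vanishing of the other cohomologies. Since $L_1 \otimes L_2^{-1} \cong \mathcal{O}_Y(6\widetilde{h}-3e)$, one has
\[
\operatorname{ext}^1(L_2,L_1) \;=\; h^1\!\bigl(Y,\, \mathcal{O}_Y(6\widetilde{h}-3e)\bigr).
\]
The projection formula together with Corollary \ref{direct image of exceptional divisor} yield $f_*\mathcal{O}_Y(6\widetilde{h}-3e) \cong I_C^3(6)$ and $R^if_*\mathcal{O}_Y(6\widetilde{h}-3e)=0$ for $i>0$, so the Leray spectral sequence (Corollary \ref{Leray spectral sequence}) gives
\[
H^i\bigl(Y,\mathcal{O}_Y(6\widetilde{h}-3e)\bigr) \;\cong\; H^i\bigl(\mathbb{P}^3, I_C^3(6)\bigr)
\]
for every $i$. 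Next I would plug $a=6$, $b=3$, $d=6$, $g=3$ and $t=0$ into the constant term of Theorem \ref{hilbert polnomial of line bundle} to obtain $\chi(\mathbb{P}^3, I_C^3(6)) = -8$. It then suffices to show $h^0 = h^2 = h^3 = 0$ of $I_C^3(6)$ so that $h^1 = -\chi = 8$.

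For $h^0$, note that $L_1$ and $L_2$ are Ulrich line bundles, hence stable with the same reduced Hilbert polynomial. Since their divisor classes $9\widetilde{h}-3e$ and $3\widetilde{h}$ are distinct in $\operatorname{Pic}(Y) = \mathbb{Z}\widetilde{h}\oplus\mathbb{Z} e$, the generalized Schur lemma for stable sheaves forces $\operatorname{Hom}(L_2,L_1) = h^0(Y,L_1\otimes L_2^{-1}) = 0$. For $h^3$, the short exact sequence $0 \to I_C^3 \to \mathcal{O}_{\mathbb{P}^3} \to \mathcal{O}_{\mathbb{P}^3}/I_C^3 \to 0$ twisted by $6$ shows that $H^3(\mathbb{P}^3,I_C^3(6))$ injects into $H^3(\mathbb{P}^3,\mathcal{O}(6))=0$, since $\mathcal{O}_{\mathbb{P}^3}/I_C^3$ is supported on the one-dimensional scheme $C$ and hence has vanishing $H^{\geq 2}$. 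For $h^2$, one invokes Theorem \ref{macaulay}: the package \emph{RandomSpaceCurves} produces an explicit ACM space curve $C'$ of degree $6$ and genus $3$ with $h^2(\mathbb{P}^3,I_{C'}^3(6))=0$, and by upper semicontinuity together with the irreducibility of $H_{6,3,3}$ recorded in Remark \ref{open condition}, the same vanishing holds on a Zariski-open subset of the deformation class $Y$.

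The substantive step is the $h^2$ vanishing, which genuinely depends on the specific geometry of degree $6$, genus $3$ curves in $\mathbb{P}^3$ and is handled computationally through Theorem \ref{macaulay}; the remaining pieces are formal. Collecting everything, $h^1(\mathbb{P}^3,I_C^3(6)) = 8$ for a generic element of the deformation class $Y$, which proves the corollary.
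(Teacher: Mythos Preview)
Your proof is correct and follows essentially the same route as the paper: compute $\chi(\mathcal{O}_Y(6\widetilde{h}-3e))=-8$ via Theorem~\ref{hilbert polnomial of line bundle}, kill $h^0$ by stability of the two Ulrich line bundles, kill $h^2$ via the Macaulay2 computation of Theorem~\ref{macaulay} plus semicontinuity (Remark~\ref{open condition}), and conclude $h^1=8$. The only cosmetic difference is that you push everything down to $\mathbb{P}^3$ at the outset and handle $h^3$ directly from the sequence $0\to I_C^3(6)\to\mathcal{O}_{\mathbb{P}^3}(6)\to\mathcal{O}_{\mathbb{P}^3}/I_C^3(6)\to 0$, whereas the paper stays on $Y$ and obtains $h^3=\hom(L_1(1),L_2)=0$ via Serre duality and \cite[Proposition~1.2.7]{HL10}; both are equally valid.
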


 \begin{proof}
We know that
	\[ext^{1}(L_{2},L_{1})=h^{1}(Y,L_{2}^{\vee} \otimes L_{1}),
\]
where $L_{2}^{\vee} \otimes L_{1}=\mathcal{O}_{Y}(-(3\widetilde{h})+(9\widetilde{h}-3e))=\mathcal{O}_{Y}(6\widetilde{h}-3e)$.\\
By Theorem \ref{hilbert polnomial of line bundle}, $\chi(Y,L_{2}^{\vee} \otimes L_{1})=-8$. So, we have
\[
h^{0}(L_{2}^{\vee} \otimes L_{1})-h^{1}(L_{2}^{\vee} \otimes L_{1})+h^{2}(L_{2}^{\vee} \otimes L_{1})-h^{3}(L_{2}^{\vee} \otimes L_{1})=-8
\]
\begin{eqnarray*}
	\Rightarrow h^{1}(L_{2}^{\vee} \otimes L_{1})&=&8+h^{0}(L_{2}^{\vee} \otimes L_{1})+h^{2}(L_{2}^{\vee} \otimes L_{1})-h^{3}(L_{2}^{\vee} \otimes L_{1})\\
	&=&8+\hom(L_{2},L_{1})+h^{2}(L_{2}^{\vee} \otimes L_{1})-\hom(L_{1}(1),L_{2})
\end{eqnarray*}
where $\hom(L_{2},L_{1})=\hom(L_{1}(1),L_{2})=0$ by \cite[Proposition 1.2.7]{HL10}. So
\begin{eqnarray*}	
	h^{1}(Y,L_{2}^{\vee} \otimes L_{1})&=& h^{2}(Y,L_{2}^{\vee} \otimes L_{1})+8.
\end{eqnarray*}
Use Corollary \ref{Leray spectral sequence} to compute $h^{2}(Y,L_{2}^{\vee} \otimes L_{1})$. We know that $H^{0}( \mathbb{P}^{3},R^{2} f _{\ast }L_{2}^{\vee} \otimes L_{1})=H^{0}( \mathbb{P}^{3},R^{2} f _{\ast }\mathcal{O}_{Y}(-3e)\otimes\mathcal{O}_{\mathbb{P}^{3}}(6))$ by the projection formula and $R^{2} f _{\ast } \mathcal{O}_{Y}(-3e)=0$ by Corollary \ref{direct image of exceptional divisor}.\\
 So, $H^{0}( \mathbb{P}^{3},R^{2} f _{\ast }L_{2}^{\vee} \otimes L_{1})=0$. Similarly, $H^{1}( \mathbb{P}^{3},R^{1} f _{\ast }L_{2}^{\vee} \otimes L_{1})=0$.\\
Also by the projection formula, we know that 
\begin{eqnarray*}
f_{\ast }(L_{2}^{\vee} \otimes L_{1})&=&f_{\ast }\mathcal{O}_{Y}(6 \widetilde{h}-3e)  \\
&=&\mathcal{O}_{\mathbb{P}^{3}}(6) \otimes f_{\ast }\mathcal{O}_{Y}(-3e)\\
&=&I_{C}^{3}\otimes \mathcal{O}_{\mathbb{P}^{3}}(6)\\
&=&I_{C}^{3}(6).
\end{eqnarray*}
So $H^{2}( \mathbb{P}^{3}, f _{\ast }L_{2}^{\vee} \otimes L_{1})=H^{2}( \mathbb{P}^{3},I_{C}^{3}(6))$. Hence, by Corollary \ref{Leray spectral sequence},
 	 \[H^{2}(Y,L_{2}^{\vee} \otimes L_{1})=H^{2}( \mathbb{P}^{3},I_{C}^{3}(6)).
 \]
So
\begin{eqnarray*}	
	h^{1}(Y,L_{2}^{\vee} \otimes L_{1})&=& h^{2}(\mathbb{P}^{3},I_{C}^{3}(6))+8.
\end{eqnarray*}
But, $h^{2}( \mathbb{P}^{3},I_{C}^{3}(6))=0$ by Remark \ref{open condition}, for a generic element of deformation class $Y$. Hence, $ext^{1}(L_{2},L_{1})=8$ for a generic element of deformation class $Y$.
  \end{proof}

\begin{corollary} \label{extension of line bundles 2}
For a generic element of deformation class $Y$, $ext^{1}(L_{1},L_{2})\leq8$.
\end{corollary}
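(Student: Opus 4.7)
The plan is to mimic the structure of Corollary \ref{extension of line bundles}: identify $\mathrm{ext}^{1}(L_{1},L_{2})$ with $h^{1}(Y, L_{1}^{\vee}\otimes L_{2})$, then reduce this cohomology group on $Y$ to a cohomology group on $\mathbb{P}^{3}$ that is already controlled. A direct computation gives $L_{1}^{\vee}\otimes L_{2}=\mathcal{O}_{Y}(-6\widetilde{h}+3e)$. Here the sign of the exceptional coefficient is the opposite of the one appearing in Corollary \ref{extension of line bundles}, and this is the main obstacle: Corollary \ref{direct image of exceptional divisor} cleanly describes $f_{\ast}\mathcal{O}_{Y}(-me)$ and its higher direct images, but for a positive multiple of $e$ the higher direct images $R^{i}f_{\ast}\mathcal{O}_{Y}(me)$ are supported on $C$ and need not vanish, so Leray cannot be applied naively.

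To circumvent this, I would first apply Serre duality on the Fano threefold $Y$. Since $K_{Y}=-H_{Y}=-4\widetilde{h}+e$, Serre duality yields
\[
h^{1}(Y, L_{1}^{\vee}\otimes L_{2}) \;=\; h^{2}(Y, L_{1}\otimes L_{2}^{\vee}\otimes K_{Y}) \;=\; h^{2}(Y, \mathcal{O}_{Y}(2\widetilde{h}-2e)),
\]
where the last equality is the short computation $(9\widetilde{h}-3e)+(-3\widetilde{h})+(-4\widetilde{h}+e)=2\widetilde{h}-2e$. The new line bundle $\mathcal{O}_{Y}(2\widetilde{h}-2e)$ now has a negative coefficient of $e$, so the projection formula combined with Corollary \ref{direct image of exceptional divisor} gives $f_{\ast}\mathcal{O}_{Y}(2\widetilde{h}-2e)=I_{C}^{2}(2)$ and $R^{i}f_{\ast}\mathcal{O}_{Y}(2\widetilde{h}-2e)=0$ for $i>0$.

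With the higher direct images vanishing, Corollary \ref{Leray spectral sequence} collapses the spectral sequence and yields
\[
h^{2}(Y, \mathcal{O}_{Y}(2\widetilde{h}-2e)) \;=\; h^{2}(\mathbb{P}^{3}, I_{C}^{2}(2)).
\]
Combining with Remark \ref{open condition}, which asserts $h^{2}(\mathbb{P}^{3}, I_{C}^{2}(2))\le 8$ for a generic element of the deformation class $Y$, one concludes $\mathrm{ext}^{1}(L_{1},L_{2})\le 8$. Once the Serre duality trick is in place, every remaining step is mechanical and parallels Corollary \ref{extension of line bundles}; the only genuinely new ingredient is the recognition that pushing down requires first flipping the sign of the exceptional coefficient via $K_{Y}$.
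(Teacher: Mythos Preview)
Your proposal is correct and follows essentially the same approach as the paper: apply Serre duality to convert $h^{1}(Y,\mathcal{O}_{Y}(-6\widetilde{h}+3e))$ into $h^{2}(Y,\mathcal{O}_{Y}(2\widetilde{h}-2e))$, then use the vanishing of the higher direct images and the projection formula to identify this with $h^{2}(\mathbb{P}^{3},I_{C}^{2}(2))$, which is bounded by Remark~\ref{open condition}. The paper's proof is identical in structure; your added commentary on why Serre duality is needed (to flip the sign on the exceptional coefficient so that Corollary~\ref{direct image of exceptional divisor} applies) is a helpful clarification not made explicit there.
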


 \begin{proof}
We know that
	\[ext^{1}(L_{1},L_{2})=h^{1}(Y,L_{1}^{\vee} \otimes L_{2})=h^{2}(Y,L_{2}^{\vee} \otimes L_{1} \otimes K_{Y})
\]
where $L_{2}^{\vee} \otimes L_{1} \otimes K_{Y}=\mathcal{O}_{Y}(-(3\widetilde{h})+(9\widetilde{h}-3e)+(-4\widetilde{h}+e))=\mathcal{O}_{Y}(2\widetilde{h}-2e)$. Use Corollary \ref{Leray spectral sequence} to compute $h^{2}(Y,L_{2}^{\vee} \otimes L_{1} \otimes K_{Y})$.\\
We know that $H^{0}( \mathbb{P}^{3},R^{2} f _{\ast }L_{2}^{\vee} \otimes L_{1} \otimes K_{Y})=H^{0}( \mathbb{P}^{3},R^{2} f _{\ast }\mathcal{O}_{Y}(-2e)\otimes\mathcal{O}_{\mathbb{P}^{3}}(2))$ by the projection formula and $R^{2} f _{\ast } \mathcal{O}_{Y}(-2e)=0$ by Corollary \ref{direct image of exceptional divisor}.\\
So, $H^{0}( \mathbb{P}^{3},R^{2} f _{\ast }L_{2}^{\vee} \otimes L_{1} \otimes K_{Y})=0$.\\
Similarly, $H^{1}( \mathbb{P}^{3},R^{1} f _{\ast }L_{2}^{\vee} \otimes L_{1} \otimes K_{Y})=H^{1}( \mathbb{P}^{3},R^{1} f _{\ast }\mathcal{O}_{Y}(-2e)\otimes\mathcal{O}_{\mathbb{P}^{3}}(2))$ by the projection formula and $R^{1} f _{\ast } \mathcal{O}_{Y}(-3e)=0$ by Corollary \ref{direct image of exceptional divisor}.\\
 So, $H^{1}( \mathbb{P}^{3},R^{1} f _{\ast }L_{2}^{\vee} \otimes L_{1} \otimes K_{Y})=0$.\\
Also, by the projection formula, we know that 
\begin{eqnarray*}
f_{\ast }(L_{2}^{\vee} \otimes L_{1} \otimes K_{Y})&=&f_{\ast }\mathcal{O}_{Y}(2 \widetilde{h}-2e)  \\
&=&\mathcal{O}_{\mathbb{P}^{3}}(2) \otimes f_{\ast }\mathcal{O}_{Y}(-2e)\\
&=&I_{C}^{2}\otimes \mathcal{O}_{\mathbb{P}^{3}}(2)\\
&=&I_{C}^{2}(2).
\end{eqnarray*}
So $H^{2}( \mathbb{P}^{3}, f _{\ast }L_{2}^{\vee} \otimes L_{1} \otimes K_{Y})=H^{2}( \mathbb{P}^{3},I_{C}^{2}(2))$. Hence, by Corollary \ref{Leray spectral sequence},
 	 \[H^{2}(Y,L_{2}^{\vee} \otimes L_{1} \otimes K_{Y})=H^{2}( \mathbb{P}^{3},I_{C}^{2}(2)).
 \]
But, $h^{2}( \mathbb{P}^{3},I_{C}^{2}(2))\leq 8$ by Remark \ref{open condition}, for a generic element of deformation class $Y$.\\
Hence, $ext^{1}(L_{1},L_{2})\leq 8$ for a generic element of deformation class $Y$.
  \end{proof}

\begin{theorem} \label{simple rank 2 Ulrich}
Let $\mathcal{E}$ be a rank 2 vector bundle on $Y$ obtained by a non-split extension 
	\[0 \rightarrow L_{1} \rightarrow \mathcal{E} \rightarrow L_{2} \rightarrow 0
\]
or
\[0 \rightarrow L_{2} \rightarrow \mathcal{E} \rightarrow L_{1} \rightarrow 0
\]
where $L_{1}=\mathcal{O}_{Y}(9\widetilde{h}-3e)$ and $L_{2}=\mathcal{O}_{Y}(3\widetilde{h})$. Then $\mathcal{E}$ is a simple Ulrich bundle with $c_{1}(\mathcal{E})=12\widetilde{h}-3e$ and $c_{2}(\mathcal{E})=27\widetilde{h}^{2}-9\widetilde{h}e$.
\end{theorem}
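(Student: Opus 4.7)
My plan is to verify the three conclusions in turn. The Ulrich property should come from the fact that being Ulrich is an extension-closed condition (as both ACM and having the right Hilbert polynomial are preserved under extensions), the Chern classes should follow from Whitney's multiplicativity applied to the defining short exact sequence, and simplicity should follow by applying the $\mathrm{Hom}$ functors to the extension and using that $L_1$ and $L_2$ are non-isomorphic stable sheaves with the same reduced Hilbert polynomial.

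First I would establish that $\mathcal{E}$ is Ulrich using Proposition \ref{hilbert polynoial condition}. By Theorem \ref{line bundles} both $L_1$ and $L_2$ are Ulrich, hence ACM, so the long exact sequence in cohomology attached to $0 \to L_1(t) \to \mathcal{E}(t) \to L_2(t) \to 0$ (or the analogous one with $L_1,L_2$ swapped) forces $H^i(Y,\mathcal{E}(t))=0$ for $i=1,2$ and all $t\in\mathbb{Z}$. The Hilbert polynomial is additive on short exact sequences, and $L_1,L_2$ both have Hilbert polynomial $H_Y^3\binom{t+3}{3}=20\binom{t+3}{3}$ by Proposition \ref{hilbert polynoial condition}, so $\chi(Y,\mathcal{E}(t))=40\binom{t+3}{3}=2H_Y^3\binom{t+3}{3}$, which is exactly the Ulrich polynomial for a rank $2$ bundle. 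Proposition \ref{hilbert polynoial condition} then gives that $\mathcal{E}$ is Ulrich. The Chern classes are immediate from Whitney's formula: $c(\mathcal{E})=c(L_1)c(L_2)$ yields $c_1(\mathcal{E})=(9\widetilde{h}-3e)+3\widetilde{h}=12\widetilde{h}-3e$ and $c_2(\mathcal{E})=(9\widetilde{h}-3e)\cdot 3\widetilde{h}=27\widetilde{h}^2-9\widetilde{h}e$.

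For simplicity I would work with the first extension $0\to L_1\to \mathcal{E}\to L_2\to 0$ (the other case is symmetric). Since $L_1$ and $L_2$ are both Ulrich line bundles, they share the same reduced Hilbert polynomial, and as non-isomorphic stable sheaves of identical reduced Hilbert polynomial we have $\hom(L_1,L_2)=\hom(L_2,L_1)=0$ by \cite[Proposition 1.2.7]{HL10}. Applying $\mathrm{Hom}(L_2,-)$ to the sequence and using that the connecting map $\hom(L_2,L_2)\to \mathrm{Ext}^1(L_2,L_1)$ sends $\mathrm{id}_{L_2}$ to the (nonzero) extension class, one obtains $\hom(L_2,\mathcal{E})=0$. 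Applying $\mathrm{Hom}(L_1,-)$ gives $\hom(L_1,\mathcal{E})=\hom(L_1,L_1)=\mathbb{K}$. Finally applying $\mathrm{Hom}(-,\mathcal{E})$ yields
\[
0\to \hom(L_2,\mathcal{E})\to \hom(\mathcal{E},\mathcal{E})\to \hom(L_1,\mathcal{E}),
\]
i.e.\ $\hom(\mathcal{E},\mathcal{E})\hookrightarrow \mathbb{K}$. Since $\mathrm{id}_{\mathcal{E}}$ is a nonzero endomorphism, $\hom(\mathcal{E},\mathcal{E})=\mathbb{K}$ and $\mathcal{E}$ is simple.

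The only delicate point is the vanishing $\hom(L_1,L_2)=\hom(L_2,L_1)=0$; once this is in hand, the connecting-map argument driven by non-splitness carries everything through mechanically. The Ulrich property and Chern class computation are essentially formal.
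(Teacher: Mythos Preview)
Your proof is correct and follows essentially the same route as the paper's: the paper invokes \cite[Proposition 2.8]{CKM13} for the Ulrich property and \cite[Lemma 4.2]{CHGS12} for simplicity, while you unpack these references by hand (the long exact sequence in cohomology plus additivity of Hilbert polynomials for the former, and the connecting-map argument with $\hom(L_i,L_j)=0$ via \cite[Proposition 1.2.7]{HL10} for the latter). The Chern class computation is identical.
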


 \begin{proof}
By Theorem \ref{line bundles}, $L_{1}$ and $L_{2}$ are Ulrich line bundles. Since they are Ulrich, they have the same slope by Proposition \ref{hilbert polynoial condition}. Since they are line bundles, they are trivially stable. Clearly, they are non-isomorphic. Hence $\mathcal{E}$ is a simple vector bundle by \cite[Lemma 4.2]{CHGS12}.\\
Since $L_{1}$ and $L_{2}$ are Ulrich bundles, $\mathcal{E}$ is an Ulrich bundle by \cite[Proposition 2.8]{CKM13}.\\
Moreover, we have
\begin{eqnarray*}
c_{1}(\mathcal{E}) &=& c_{1}(L_{1})+c_{1}(L_{2})\\
&=& (9\widetilde{h}-3e) + (3\widetilde{h})\\
&=&12\widetilde{h}-3e
\end{eqnarray*}
and
\begin{eqnarray*}
c_{2}(\mathcal{E}) &=&c_{1}(L_{1})c_{1}(L_{2})\\
&=& (9\widetilde{h}-3e)(3\widetilde{h})\\
&=&27\widetilde{h}^{2}-9\widetilde{h}e.
\end{eqnarray*}
 \end{proof}

\begin{theorem} \label{quot dimension 15}
Let $\mathcal{E}$ be a rank 2 simple Ulrich bundle on $Y$ with $c_{1}(\mathcal{E})=12\widetilde{h}-3e$ and $c_{2}(\mathcal{E})=27\widetilde{h}^{2}-9\widetilde{h}e$. Then $h^{1}(\mathcal{E}\otimes\mathcal{E^{\vee}})-h^{2}(\mathcal{E}\otimes\mathcal{E^{\vee}})=15$. 
\end{theorem}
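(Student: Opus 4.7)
The plan is to compute $\chi(\mathcal{E}\otimes\mathcal{E}^{\vee})$ via Hirzebruch--Riemann--Roch, then pin down the outer cohomologies $h^{0}$ and $h^{3}$ directly, so that $h^{1}-h^{2}$ falls out by the Euler characteristic identity.

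First I would use simplicity and Serre duality to eliminate the two extreme terms. Since $\mathcal{E}$ is simple, $h^{0}(\mathcal{E}\otimes\mathcal{E}^{\vee})=\hom(\mathcal{E},\mathcal{E})=1$. For the top one, Serre duality on the Fano threefold $Y$ (where $K_{Y}=-H$) together with the self-duality $\mathcal{E}\otimes\mathcal{E}^{\vee}\cong(\mathcal{E}\otimes\mathcal{E}^{\vee})^{\vee}$ gives $h^{3}(\mathcal{E}\otimes\mathcal{E}^{\vee})=\hom(\mathcal{E},\mathcal{E}(-H))$. Because $\mathcal{E}$ is Ulrich it is semistable, hence $\mu$-semistable by Lemma~\ref{stability lemma}, and $\mu(\mathcal{E}(-H))<\mu(\mathcal{E})$ since $H^{3}>0$; the standard fact that $\Hom$ between $\mu$-semistable sheaves vanishes when the source has strictly larger slope then forces $h^{3}(\mathcal{E}\otimes\mathcal{E}^{\vee})=0$.

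Next I would compute $\chi(\mathcal{E}\otimes\mathcal{E}^{\vee})$ via HRR. Writing $c_{i}=c_{i}(\mathcal{E})$, the Chern character of $\mathcal{E}\otimes\mathcal{E}^{\vee}$ (rank $4$, self-dual) has vanishing odd components and
\[
\operatorname{ch}(\mathcal{E}\otimes\mathcal{E}^{\vee})=4+(c_{1}^{2}-4c_{2})+\cdots,
\]
so integration against $\operatorname{td}(Y)$ gives
\[
\chi(\mathcal{E}\otimes\mathcal{E}^{\vee})=\tfrac{1}{6}\,c_{1}(Y)\cdot c_{2}(Y)+\tfrac{1}{2}\,c_{1}(Y)\cdot(c_{1}^{2}-4c_{2}).
\]
The relevant intersection numbers on $Y=\widetilde{\mathbb{P}}^{3}$ are already recorded in the proof of Theorem~\ref{hilbert polnomial of line bundle}: with $d=6,\ g=3$ one has $c_{1}(T_{Y})=4\widetilde{h}-e$, $c_{2}(T_{Y})=12\widetilde{h}^{2}-4\widetilde{h}e$, together with $\widetilde{h}^{3}=1$, $\widetilde{h}^{2}e=0$, $\widetilde{h}e^{2}=-6$, $e^{3}=-28$. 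A direct substitution using the given $c_{1}(\mathcal{E})=12\widetilde{h}-3e$ and $c_{2}(\mathcal{E})=27\widetilde{h}^{2}-9\widetilde{h}e$ (noting that $c_{1}^{2}-4c_{2}=(6\widetilde{h}-3e)^{2}$ is just the square of the difference of the line-bundle summands) will evaluate each of the two terms to rational integers whose sum I expect to be $-14$.

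Finally, combining $h^{0}=1$, $h^{3}=0$, and $\chi=-14$ yields $h^{1}-h^{2}=h^{0}-h^{3}-\chi=15$. There is no serious obstacle; the only delicate point is being careful that the vanishing of $h^{3}$ really follows from $\mu$-semistability of $\mathcal{E}$ (and not from a stronger stability hypothesis, which at this stage of the paper we do not yet possess), but the slope-comparison argument above is exactly what is needed and uses only Lemma~\ref{stability lemma}. The rest reduces to the intersection-number bookkeeping already set up in Section~2.
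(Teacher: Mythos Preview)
Your proposal is correct and follows essentially the same route as the paper: both arguments compute $\chi(\mathcal{E}\otimes\mathcal{E}^{\vee})=-14$ by Riemann--Roch using the intersection data from Theorem~\ref{hilbert polnomial of line bundle}, identify $h^{0}=1$ from simplicity, and kill $h^{3}$ via Serre duality together with the slope inequality for $\mu$-semistable sheaves (the paper phrases the last step as $\hom(\mathcal{E}(1),\mathcal{E})=0$ by \cite[Proposition~1.2.7]{HL10}, which is exactly your slope-comparison argument). The only cosmetic difference is that the paper computes the Chern classes of $\mathcal{E}\otimes\mathcal{E}^{\vee}$ via the splitting principle with roots $9\widetilde{h}-3e$ and $3\widetilde{h}$, whereas you work directly with $\operatorname{ch}_{2}=c_{1}^{2}-4c_{2}=(6\widetilde{h}-3e)^{2}$; these are equivalent bookkeeping choices.
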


 \begin{proof}
Note that the Chern polynomial of $\mathcal{E}$ is
	\[c_{t}(\mathcal{E})=(1+(9\widetilde{h}-3e)t)(1+(3\widetilde{h})t)=\prod^{2}_{i=1}(1+a_{i}t)
\]
where $a_{1}=9\widetilde{h}-3e$ and $a_{2}=3\widetilde{h}$.\\
Also, 
\begin{eqnarray*}
c_{1}(\mathcal{E}^{\vee}) &=& (-1)^{1}c_{1}(\mathcal{E})\\
&=&-12\widetilde{h}+3e
\end{eqnarray*}
and
\begin{eqnarray*}
c_{2}(\mathcal{E}^{\vee}) &=&(-1)^{2}c_{2}(\mathcal{E})\\
&=&27\widetilde{h}^{2}-9\widetilde{h}e.
\end{eqnarray*}
Then the Chern polynomial of $\mathcal{E^{\vee}}$ is
\begin{eqnarray*}
c_{t}(\mathcal{E}^{\vee})=(1+(-9\widetilde{h}+3e)t)(1+(-3\widetilde{h})t)=\prod^{2}_{i=1}(1+b_{i}t)
\end{eqnarray*}
where $b_{1}=-(9\widetilde{h}-3e)$ and $b_{2}=-3\widetilde{h}$. Then we have 
\begin{eqnarray*}
c_{t}(\mathcal{E}\otimes\mathcal{E^{\vee}})&=&\prod_{i,j=1}^{2}(1+(a_{i}+b_{j})t)\\
&=&(1+0t)(1+(6\widetilde{h}-3e)t)(1+(-6\widetilde{h}+3e)t)(1+0t)\\
&=&1+0t+(-36\widetilde{h}^{2}+36\widetilde{h}e-9e^{2})t^{2}+0t^{3}+0t^{4}.
\end{eqnarray*}
So, $c_{2}(\mathcal{E}\otimes\mathcal{E^{\vee}})=-36\widetilde{h}^{2}+36\widetilde{h}e-9e^{2}$ and $c_{i}(\mathcal{E}\otimes\mathcal{E^{\vee}})=0$ for $i=1,3,4$.\\
By Theorem \ref{hilbert polnomial of line bundle}, we have
	\[c_{1}(\mathcal{T}_{Y})=4\widetilde{h}-e
\]
	\[c_{2}(\mathcal{T}_{Y})=12\widetilde{h}^{2}-4\widetilde{h}e
\]
and
	\[\deg(\widetilde{h}^{3})=1
\]
	\[\deg(\widetilde{h}^{2}e)=0\\
\]	
	\[\deg(\widetilde{h}e^{2})=-6
\]
	\[\deg(\widetilde{e}^{3})=-28.
\]
Apply the Riemann-Roch theorem for $\mathcal{E}\otimes\mathcal{E^{\vee}}$ on $Y$ if $c_{i}=c_{i}(\mathcal{E}\otimes\mathcal{E^{\vee}})$ and $d_{i}=c_{i}(\mathcal{T}_{\widetilde{Y}})$:
\begin{eqnarray*}
\chi(\widetilde{Y},\mathcal{E}\otimes\mathcal{E^{\vee}})&=&\frac{1}{6}(c^{3}_{1}-3c_{1}c_{2}+3c_{3})+\frac{1}{4}d_{1}(c^{2}_{1}-2c_{2})+\frac{1}{12}(d^{2}_{1}+d_{2})c_{1}+\frac{4}{24}d_{1}d_{2}\\
&=&\frac{1}{4}(4\widetilde{h}-e)(-2(-36\widetilde{h}^{2}+36\widetilde{h}e-9e^{2}))+\frac{4}{24}(4\widetilde{h}-e)(12\widetilde{h}^{2}-4\widetilde{h}e)\\
&=&\frac{1}{4}(-72)+\frac{1}{6}(24)\\
&=&-14.
\end{eqnarray*}
Then we have 
	\[
h^{0}(\mathcal{E}\otimes\mathcal{E^{\vee}})-h^{1}(\mathcal{E}\otimes\mathcal{E^{\vee}})+h^{2}(\mathcal{E}\otimes\mathcal{E^{\vee}})-h^{3}(\mathcal{E}\otimes\mathcal{E^{\vee}})=-14
\]
\begin{eqnarray*}
	\Rightarrow h^{1}(\mathcal{E}\otimes\mathcal{E^{\vee}})-h^{2}(\mathcal{E}\otimes\mathcal{E^{\vee}})&=&14+h^{0}(\mathcal{E}\otimes\mathcal{E^{\vee}})-h^{3}(\mathcal{E}\otimes\mathcal{E^{\vee}})\\
	&=&14+\hom(\mathcal{E},\mathcal{E})-\hom(\mathcal{E}(1),\mathcal{E})
\end{eqnarray*}
where $\hom(\mathcal{E},\mathcal{E})=1$ since $\mathcal{E}$ is simple. So
\begin{eqnarray*}	
	h^{1}(\mathcal{E}\otimes\mathcal{E^{\vee}})-h^{2}(\mathcal{E}\otimes\mathcal{E^{\vee}})&=&14+1-\hom(\mathcal{E}(1),\mathcal{E})
\end{eqnarray*}
where $\hom(\mathcal{E}(1),\mathcal{E})=0$ by \cite[Proposition 1.2.7]{HL10}. So
\begin{eqnarray*}	
	h^{1}(\mathcal{E}\otimes\mathcal{E^{\vee}})-h^{2}(\mathcal{E}\otimes\mathcal{E^{\vee}})&=&14+1-0\\
	&=&15.
\end{eqnarray*}
 \end{proof}

\subsection{Quot Scheme}
The general reference for this section is \cite[Section 2.2]{HL10}.\\
The Quot scheme $Quot_{X}(F,P)$ parametrizes quotient sheaves of a given $\mathcal{O}_{X}$-module $F$ with Hilbert polynomial $P$.
In this subsection, we briefly review some properties of the Quot scheme, including properties about its local dimension.

Let $\kappa$ be a field, $S$ be $\kappa$-scheme of finite type and $Sch/S$ be the category of $S$-schemes. Let $\phi:X \rightarrow S$ be a projective morphism and $\mathcal{O}_{X}(1)$ an $\phi$-ample line bundle on $X$. Let $\mathcal{H}$ be a coherent $\mathcal{O}_{X}$-module and $P\in\mathbb{Q}[z]$ a polynomial. The functor
	\[
  \mathcal{Q}:= \underline{Quot}_{X/S}:(Sch/S)^{o}\rightarrow(Sets)
\]
is defined as follows:\\
If $T \rightarrow S$ is an object in $Sch/S$,  let $\mathcal{Q}(T)$ be the set of all $T$-flat coherent quotient sheaves $\mathcal{H}_{T}=\mathcal{O}_{T}\otimes \mathcal{H} \rightarrow F$ with Hilbert poynomial $P$. And if $h:T^{'}\rightarrow T$ is an $S$-morphism, let $\mathcal{Q}(h):\mathcal{Q}(T)\rightarrow\mathcal{Q}(T^{'})$ be the map that sends $\mathcal{H}_{T}\rightarrow F$ to $\mathcal{H}_{T^{'}}\rightarrow h^{*}_{X}F$.

\begin{theorem}
The functor $\underline{Quot}_{X/S}(\mathcal{H},P)$ is represented by a projective $S$-scheme $\pi:Quot_{X/S}(\mathcal{H},P) \rightarrow S$.
\end{theorem}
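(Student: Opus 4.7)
The plan is to follow Grothendieck's classical construction, realising $\underline{Quot}_{X/S}(\mathcal{H},P)$ as a closed subfunctor of a relative Grassmannian. First I would reduce to the case $\mathcal{H} = \mathcal{O}_X(-a)^{\oplus N}$. Since $\phi$ is projective and $\mathcal{H}$ is coherent, there is a surjection $\mathcal{O}_X(-a)^{\oplus N} \twoheadrightarrow \mathcal{H}$ for suitable $a,N$; composing with quotients of $\mathcal{H}$ yields an injection of functors $\underline{Quot}_{X/S}(\mathcal{H},P) \hookrightarrow \underline{Quot}_{X/S}(\mathcal{O}_X(-a)^{\oplus N},P)$, whose image is cut out by the closed condition that the kernel of the composite surjection contains the kernel of the chosen presentation.

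Second, the technical heart is a boundedness statement: the family of all $T$-flat quotients $\mathcal{H}_T \twoheadrightarrow F$ with Hilbert polynomial $P$, as $T$ ranges over $Sch/S$, is bounded. Granting this, Mumford's regularity theorem produces an integer $m_0$ such that for every $m\geq m_0$ and every such $F$, the twist $F(m)$ is $m$-regular; consequently $F(m)$ is globally generated, $h^i(F(m))=0$ for $i>0$, and $\phi_{T*}F(m)$ is locally free of rank $P(m)$ on $T$. One may simultaneously arrange that $\phi_{T*}\mathcal{H}_T(m)$ is also locally free with vanishing higher direct images. The derivation of such a uniform $m_0$, via Kleiman's criterion, is the main obstacle.

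Third, once $m$ is fixed, pushforward by $\phi_T$ defines a natural transformation
\[
\Psi : \underline{Quot}_{X/S}(\mathcal{H},P) \longrightarrow \underline{Grass}_S(\phi_*\mathcal{H}(m),P(m)),
\]
$(\mathcal{H}_T\twoheadrightarrow F) \mapsto (\phi_{T*}\mathcal{H}_T(m)\twoheadrightarrow \phi_{T*}F(m))$, whose target is represented by a projective $S$-scheme by the standard construction of relative Grassmannians. I would then check that $\Psi$ is a closed immersion of functors. Injectivity is immediate: since $F(m)$ is globally generated with cohomology recovered by $\phi_{T*}F(m)$, the sheaf $F$ can be reconstructed as the image of $\mathcal{H}_T(m) \to \phi_T^*\phi_{T*}F(m)$, twisted back by $-m$. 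For closedness of the image, one uses a flattening stratification: given a locally free quotient of $\phi_*\mathcal{H}(m)$ on a base $T$, pulling back to $X_T$ and twisting by $-m$ gives a universal quotient of $\mathcal{H}_T$ whose Hilbert polynomial is locally constant, and the locus where it equals $P$ is a union of connected components of a stratification, hence both open and closed. The intersection with the condition that this quotient agrees with the Grassmannian datum cuts out a closed subscheme, completing the representability. This is carried out in detail in \cite[Section 2.2]{HL10}.
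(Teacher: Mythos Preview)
Your proposal is correct and follows exactly the classical Grothendieck construction that is carried out in \cite[Section~2.2]{HL10}. The paper itself does not give an independent argument: its entire proof is the single line ``See \cite[Theorem~2.2.4]{HL10}.'' So you are not taking a different route; you are simply sketching the contents of the reference that the paper cites, and your final pointer to \cite[Section~2.2]{HL10} already aligns the two.
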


 \begin{proof}
See \cite[Theorem 2.2.4]{HL10}.
 \end{proof}

\begin{proposition} \label{general bounded dimension of quot scheme}
Let $X$ be a projective scheme over a field $\kappa$ and $\mathcal{H}$ a coherent sheaf on $X$. Let $[q:\mathcal{H} \rightarrow F ] \in Quot(\mathcal{H},P)$ be a $\kappa$-rational point and $K=ker(q)$. Then 
	\[hom(K,F)\geq dim_{[q]}Quot(H,P)\geq hom(K,F)-ext^{1}(K,F).
\]
If equality holds at the second place, $Quot(\mathcal{H},P)$ is a local complete intersection near $[q]$. If $ext^{1}(K,F)=0$, then $Quot(\mathcal{H},P)$ is smooth at $[q]$.	 
\end{proposition}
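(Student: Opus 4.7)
The plan is to translate the geometric claim into deformation theory for the Quot functor. The two key inputs are the identification of the Zariski tangent space at $[q]$ with $\operatorname{Hom}(K,F)$ and the construction of an obstruction class taking values in $\operatorname{Ext}^{1}(K,F)$; once both are in place, the proposition becomes a formal consequence of the general deformation-theoretic principle that a functor with tangent space $T$ and obstruction space embedded in $O$ is pro-represented by a quotient of $\kappa[[x_{1},\dots,x_{\dim T}]]$ by at most $\dim O$ formal relations.

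First, I would identify $T_{[q]}Quot(\mathcal{H},P)$ with $\operatorname{Hom}(K,F)$. By the universal property recalled in the preceding theorem, tangent vectors at $[q]$ are in bijection with flat quotients of $\mathcal{H}_{\kappa[\epsilon]/(\epsilon^{2})}$ reducing to $q$ modulo $\epsilon$ and having Hilbert polynomial $P$. Applying the snake lemma to the sequence $0 \to K \to \mathcal{H} \to F \to 0$ pulled back to $\operatorname{Spec}\kappa[\epsilon]/(\epsilon^{2})$, together with the local flatness criterion, shows that the set of such lifts is a torsor under $\operatorname{Hom}(K,F)$. Since the embedding dimension at a point of a scheme bounds the local dimension, this gives the upper bound
\[
\dim_{[q]}Quot(\mathcal{H},P) \leq hom(K,F).
\]

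Second, I would analyze obstructions to lifting. For every small extension of Artin local $\kappa$-algebras $0 \to I \to A' \to A \to 0$ with $I\cdot\mathfrak{m}_{A'}=0$, and each flat quotient over $\operatorname{Spec}A$ extending $q$, a standard construction (via the cotangent complex, or concretely via a \v{C}ech-cohomology diagram chase on the short exact sequence $0 \to K \to \mathcal{H} \to F \to 0$) produces a class in $\operatorname{Ext}^{1}(K,F)\otimes_{\kappa} I$ whose vanishing is necessary and sufficient for a lift to $\operatorname{Spec}A'$ to exist; furthermore, when nonempty, the set of lifts is a torsor under $\operatorname{Hom}(K,F)\otimes_{\kappa}I$. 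Invoking the general principle above, the completed local ring $\widehat{\mathcal{O}}_{Quot,[q]}$ is presented as a quotient of $\kappa[[x_{1},\dots,x_{n}]]$, with $n = hom(K,F)$, by an ideal generated by at most $ext^{1}(K,F)$ power series. Comparing Krull dimensions immediately yields
\[
\dim_{[q]}Quot(\mathcal{H},P) \geq hom(K,F) - ext^{1}(K,F).
\]

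The remaining assertions then follow from this same presentation. If equality holds in the lower bound, the number of generators of the defining ideal equals its codimension in the regular ring $\kappa[[x_{1},\dots,x_{n}]]$, so $\widehat{\mathcal{O}}_{Quot,[q]}$ is by definition a complete intersection and $Quot(\mathcal{H},P)$ is a local complete intersection near $[q]$. If $ext^{1}(K,F)=0$, the ideal is zero and $\widehat{\mathcal{O}}_{Quot,[q]}\cong\kappa[[x_{1},\dots,x_{n}]]$ is regular, whence $Quot(\mathcal{H},P)$ is smooth at $[q]$. The main technical obstacle is the careful construction of the obstruction class at the level of coherent sheaves, since one must check both its intrinsic definition and its compatibility with compositions of small extensions; this is classical for the Quot functor and is carried out in detail in \cite{HL10}.
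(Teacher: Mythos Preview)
Your sketch is correct and is precisely the standard deformation-theoretic argument for this statement; the paper itself does not give an independent proof but simply cites \cite[Proposition 2.2.8]{HL10}, whose argument is the one you outline. There is nothing to add beyond noting that your proposal is more detailed than what the paper records.
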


 \begin{proof}
See \cite[Proposition 2.2.8]{HL10}.
 \end{proof}

\subsection{Stable Ulrich Bundles on $Y$ with $c_{1}=3H$}
We review some well-known facts.

\begin{proposition}
Let $\mathcal{E}$ be a stable bundle on $X$. Then $\mathcal{E}$ is simple; i.e, $End(\mathcal{E}) \cong \mathbb{K}$.
\end{proposition}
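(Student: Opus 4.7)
The plan is to run the classical Schur-type argument in two steps: first establish that every nonzero $\phi \in \operatorname{End}(\mathcal{E})$ is an automorphism, and then deduce $\operatorname{End}(\mathcal{E}) \cong \mathbb{K}$ from the fact that $\mathbb{K}$ is algebraically closed and $\operatorname{End}(\mathcal{E})$ is finite-dimensional.

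For the first step, I would take a nonzero $\phi \colon \mathcal{E} \to \mathcal{E}$ and analyze the short exact sequence
\[
0 \to \ker(\phi) \to \mathcal{E} \to \operatorname{im}(\phi) \to 0.
\]
If $\ker(\phi)$ were a nonzero proper subsheaf, stability applied to $\ker(\phi) \subset \mathcal{E}$ would give $p_{\ker(\phi)} < p_{\mathcal{E}}$ for reduced Hilbert polynomials. Combining this with the additivity $P_{\mathcal{E}} = P_{\ker(\phi)} + P_{\operatorname{im}(\phi)}$ and $\operatorname{rank}(\mathcal{E}) = \operatorname{rank}(\ker(\phi)) + \operatorname{rank}(\operatorname{im}(\phi))$ forces $p_{\operatorname{im}(\phi)} > p_{\mathcal{E}}$. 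But $\operatorname{im}(\phi)$ also sits inside $\mathcal{E}$ as a subsheaf, so stability forces $p_{\operatorname{im}(\phi)} \leq p_{\mathcal{E}}$, a contradiction. Hence $\phi$ is injective. A parallel comparison of reduced Hilbert polynomials, this time passing through the torsion cokernel $\mathcal{E}/\operatorname{im}(\phi)$ (equivalently through the saturation of $\operatorname{im}(\phi)$), rules out the case where $\operatorname{im}(\phi)$ is a proper subsheaf of full rank, so $\phi$ is also surjective.

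For the second step, $\operatorname{End}(\mathcal{E}) = H^0(X, \mathcal{E}^{\vee} \otimes \mathcal{E})$ is a finite-dimensional $\mathbb{K}$-algebra because $X$ is projective and $\mathcal{E}^{\vee} \otimes \mathcal{E}$ is coherent. By Step~1, every nonzero element is invertible, so $\operatorname{End}(\mathcal{E})$ is a $\mathbb{K}$-division algebra. Since $\mathbb{K}$ is algebraically closed, any $\phi \in \operatorname{End}(\mathcal{E})$ has an eigenvalue $\lambda \in \mathbb{K}$; then $\phi - \lambda \cdot \operatorname{id}_{\mathcal{E}}$ is a non-invertible endomorphism, which by Step~1 must vanish. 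Hence $\phi = \lambda \cdot \operatorname{id}_{\mathcal{E}}$, so $\operatorname{End}(\mathcal{E}) = \mathbb{K} \cdot \operatorname{id}_{\mathcal{E}} \cong \mathbb{K}$.

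The only mild obstacle is the bookkeeping in Step~1 when $\operatorname{im}(\phi)$ is not saturated in $\mathcal{E}$; this is resolved by noting that an injective endomorphism $\phi$ with torsion cokernel $T$ satisfies $P_{\operatorname{im}(\phi)} = P_{\mathcal{E}} - P_T$, so the reduced Hilbert polynomials differ by the positive contribution of the torsion sheaf, again contradicting stability. Everything else is formal.
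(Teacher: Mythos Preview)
Your argument is correct and is precisely the standard Schur-lemma proof; the paper itself does not give an independent argument but simply cites \cite[Corollary 1.2.8]{HL10}, whose proof is the one you have written out. One small simplification: the surjectivity step does not need stability at all. Once $\phi$ is injective you have $\operatorname{im}(\phi)\cong\mathcal{E}$, hence $P_{\operatorname{im}(\phi)}=P_{\mathcal{E}}$, so the cokernel has zero Hilbert polynomial and therefore vanishes. Your phrasing ``contradicting stability'' in the last paragraph is slightly off for this reason, but the conclusion stands.
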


 \begin{proof}
 Since $\mathbb{K}$ is algebraically closed, it follows from \cite[Corollary 1.2.8]{HL10}.
 \end{proof}

\begin{theorem} \label{stability for Ulrich}
Let $\mathcal{E}$ be an Ulrich bundle of rank r on a nonsingular projective variety $X$. Then, 
\begin{itemize}
	\item $\mathcal{E}$ is semistable and $\mu$-semistable,
	\item If $\mathcal{E}$ is stable, then it is also $\mu$-stable.
\end{itemize}

\end{theorem}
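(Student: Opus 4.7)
My plan is to deduce both statements from the defining pushforward property of Ulrich bundles. Fix a finite linear projection $\pi\colon X \rightarrow \mathbb{P}^{k}$ of degree $c$ such that $\pi_{*}\mathcal{E} \cong \mathcal{O}_{\mathbb{P}^{k}}^{\oplus cr}$. Since $\pi$ is finite, $R^{i}\pi_{*} = 0$ for $i > 0$, so by Corollary~\ref{Leray spectral sequence} the Hilbert polynomial is preserved: $P_{\pi_{*}\mathcal{F}} = P_{\mathcal{F}}$ for every coherent sheaf $\mathcal{F}$ on $X$. This allows me to transfer slope and Hilbert-polynomial comparisons from $X$ to the trivial bundle $\pi_{*}\mathcal{E}$ on $\mathbb{P}^{k}$.

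For the first bullet, by Lemma~\ref{stability lemma} it suffices to establish Gieseker semistability. Let $\mathcal{F} \subset \mathcal{E}$ be saturated with $0 < s := \mathrm{rk}(\mathcal{F}) < r$. Left-exactness of $\pi_{*}$ produces an injection $\pi_{*}\mathcal{F} \hookrightarrow \mathcal{O}_{\mathbb{P}^{k}}^{\oplus cr}$, and passing to determinants yields $\det(\pi_{*}\mathcal{F}) \hookrightarrow \wedge^{cs}\mathcal{O}_{\mathbb{P}^{k}}^{\oplus cr} \cong \mathcal{O}_{\mathbb{P}^{k}}^{\oplus \binom{cr}{cs}}$, which forces $c_{1}(\pi_{*}\mathcal{F}) \leq 0$. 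The Hirzebruch--Riemann--Roch expansion of $P_{\pi_{*}\mathcal{F}}(t)$ on $\mathbb{P}^{k}$ has leading term $\frac{cs}{k!}t^{k}$ and subleading term $\frac{c_{1}(\pi_{*}\mathcal{F}) + cs(k+1)/2}{(k-1)!}t^{k-1}$; the analogous expansion for $\pi_{*}\mathcal{E}$ (whose first Chern class vanishes) shows that $P_{\mathcal{F}}/s$ and $P_{\mathcal{E}}/r$ agree in their leading coefficient, while the subleading coefficient of their difference equals $\frac{c_{1}(\pi_{*}\mathcal{F})}{s(k-1)!} \leq 0$. Hence $P_{\mathcal{F}}/s \leq P_{\mathcal{E}}/r$ lexicographically, proving Gieseker semistability.

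For the second bullet, suppose $\mathcal{E}$ is Gieseker stable but not $\mu$-stable. Then $\mu$-semistability produces a saturated $\mathcal{F} \subset \mathcal{E}$ of intermediate rank $s$ with $\mu(\mathcal{F}) = \mu(\mathcal{E})$, equivalently $c_{1}(\pi_{*}\mathcal{F}) = 0$. The decisive step is to promote this equality to a splitting, showing that a torsion-free subsheaf $\pi_{*}\mathcal{F} \subset \mathcal{O}_{\mathbb{P}^{k}}^{\oplus cr}$ of trivial determinant is itself isomorphic to $\mathcal{O}_{\mathbb{P}^{k}}^{\oplus cs}$. I would restrict to a general line $L \subset \mathbb{P}^{k}$: Grothendieck's theorem splits $\pi_{*}\mathcal{F}|_{L}$ as $\bigoplus \mathcal{O}_{L}(a_{i})$; subsheaf-ness inside $\mathcal{O}_{L}^{\oplus cr}$ forces each $a_{i} \leq 0$, and $c_{1} = 0$ then gives $a_{i} = 0$ for all $i$. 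A Horrocks-type criterion globalizes line-by-line triviality to triviality on all of $\mathbb{P}^{k}$. Granting this, $\mathcal{F}$ is Ulrich with $P_{\mathcal{F}} = cs\binom{t+k}{k}$, so $P_{\mathcal{F}}/s = P_{\mathcal{E}}/r$, contradicting Gieseker stability. The main obstacle is precisely this global splitting: one must first confirm that $\pi_{*}\mathcal{F}$ is locally free (a priori only reflexive) and then apply the uniform-bundle argument uniformly. A fallback is to argue more directly that the vanishing $c_{1}(\pi_{*}\mathcal{F}) = 0$ combined with the ACM property inherited along $\pi$ suffices to force coefficient-by-coefficient equality of the two normalized Hilbert polynomials, yielding the same contradiction without the trivialization step.
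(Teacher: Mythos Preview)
The paper does not prove this theorem; it simply cites \cite[Theorem~2.9]{CHGS12}. Your overall strategy---push forward along a finite linear projection and exploit $\pi_{*}\mathcal{E}\cong\mathcal{O}_{\mathbb{P}^{k}}^{\oplus cr}$---is exactly the one used there, so at the level of method you are aligned with the source.

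There is, however, a genuine gap, and it underlies \emph{both} bullets. In the first bullet you compare only the leading and subleading coefficients of $P_{\mathcal{F}}/s$ and $P_{\mathcal{E}}/r$; this establishes $\mu$-semistability, but when $c_{1}(\pi_{*}\mathcal{F})=0$ the subleading coefficients coincide and you have said nothing about the lower-order ones, so the clause ``hence $P_{\mathcal{F}}/s\le P_{\mathcal{E}}/r$ lexicographically'' is unjustified in that case. This is precisely the missing step you confront again in the second bullet, and the fixes you propose there do not work. A ``Horrocks-type criterion'' is inapplicable: Horrocks requires vanishing of the intermediate cohomology of $\pi_{*}\mathcal{F}$, which you have not established; and the weaker statement you actually invoke---that a bundle with trivial splitting type on a \emph{general} line is itself trivial---is false (e.g.\ a null-correlation bundle on $\mathbb{P}^{3}$, or any stable rank-two bundle on $\mathbb{P}^{k}$ with $c_{1}=0$ and $c_{2}>0$, has trivial generic splitting type but a nonempty jumping locus). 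The inclusion $\pi_{*}\mathcal{F}\hookrightarrow\mathcal{O}^{\oplus cr}$ need not stay injective after restriction to special lines, since $\pi_{*}(\mathcal{E}/\mathcal{F})$ is only torsion-free; and for $k\ge 3$ you have not even argued that $\pi_{*}\mathcal{F}$ is locally free rather than merely reflexive. Your fallback is likewise insufficient: $c_{1}(\pi_{*}\mathcal{F})=0$ does not determine the higher Chern classes of $\pi_{*}\mathcal{F}$, and $\mathcal{F}$ has no reason to inherit the ACM property from $\mathcal{E}$. The missing ingredient is a proof that a saturated degree-zero subsheaf of $\mathcal{O}_{\mathbb{P}^{k}}^{\oplus N}$ is itself trivial (equivalently, that such an $\mathcal{F}\subset\mathcal{E}$ is again Ulrich); this is what \cite{CHGS12} supplies, and once it is in hand both bullets follow immediately.
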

 
 \begin{proof}
See \cite[Theorem 2.9]{CHGS12}.
 \end{proof}
Hence, (semi)stability and $\mu$-(semi)stability are equivalent for an Ulrich bundle $\mathcal{E}$ by Lemma \ref{stability lemma} and  Theorem \ref{stability for Ulrich}.
\begin{proposition}\label{Ulrich bundles are globally generated}
Let $\mathcal{E}$ be an Ulrich bundle of rank r on a nonsingular projective variety $X$. Then $\mathcal{E}$ is globally generated.
\end{proposition}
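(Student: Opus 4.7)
The plan is to exploit the defining factorization $\pi_{\ast}\mathcal{E} \cong \mathcal{O}_{\mathbb{P}^{k}}^{\oplus cr}$ attached to a linear projection $\pi : X \to \mathbb{P}^{k}$ furnished by the Ulrich hypothesis. Since $\dim X = k$, a generic center of projection of codimension $k+1$ is disjoint from $X$, so $\pi$ is a finite (in particular affine) surjective morphism. On the target, the trivial bundle $\pi_{\ast}\mathcal{E}$ is tautologically globally generated: the evaluation map
\[
H^{0}(\mathbb{P}^{k},\pi_{\ast}\mathcal{E}) \otimes \mathcal{O}_{\mathbb{P}^{k}} \longrightarrow \pi_{\ast}\mathcal{E}
\]
is an isomorphism.

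The second step is to transport this surjection to $X$. Applying the right-exact functor $\pi^{\ast}$ produces a surjection
\[
H^{0}(\mathbb{P}^{k},\pi_{\ast}\mathcal{E}) \otimes \mathcal{O}_{X} \longrightarrow \pi^{\ast}\pi_{\ast}\mathcal{E} \longrightarrow 0,
\]
which I would then compose with the adjunction counit $\pi^{\ast}\pi_{\ast}\mathcal{E} \to \mathcal{E}$. The crucial observation is that because $\pi$ is affine, this counit is surjective: on affine charts $\operatorname{Spec}(A) \subset \mathbb{P}^{k}$ with preimage $\operatorname{Spec}(B) \subset X$, it reduces to the multiplication map $B \otimes_{A} M \to M$, which is split by $m \mapsto 1 \otimes m$.

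Finally, under the canonical identification $H^{0}(\mathbb{P}^{k},\pi_{\ast}\mathcal{E}) = H^{0}(X,\mathcal{E})$ inherent in the definition of pushforward, the composite surjection coincides with the evaluation map $H^{0}(X,\mathcal{E}) \otimes \mathcal{O}_{X} \to \mathcal{E}$, witnessing $\mathcal{E}$ as globally generated. I do not anticipate any serious obstacle: the only point requiring some care is checking that this composite is really the evaluation at actual global sections of $\mathcal{E}$ on $X$ rather than an unrelated surjection with the same source, and this is immediate from the triangle identities for the adjunction $\pi^{\ast} \dashv \pi_{\ast}$.
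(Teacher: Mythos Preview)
Your proof is correct. The paper itself does not supply an argument here; it simply cites \cite[Corollary 2.5]{CKM13}. Your direct route through the finiteness of the linear projection and the surjectivity of the counit $\pi^{\ast}\pi_{\ast}\mathcal{E}\to\mathcal{E}$ is the standard one and is essentially what that reference does. One minor remark: you need not invoke a \emph{generic} center of projection, since the definition already hands you a morphism $\pi:X\to\mathbb{P}^{k}$, and any linear projection that is defined on all of $X$ necessarily has center disjoint from $X$ and is therefore finite.
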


 \begin{proof}
See \cite[Corollary 2.5]{CKM13}.
 \end{proof}

\begin{lemma} \label{Jordan-Holder factors of Ulrich}
Let $\mathcal{E}$ be an Ulrich bundle on $X$. Then for any Jordan-H\"{o}lder filtration
	\[ 0=\mathcal{E}_{0} \subseteq \mathcal{E}_{1} \subseteq \cdots \subseteq \mathcal{E}_{m-1} \subseteq \mathcal{E}_{m}=\mathcal{E}
\]
we have that $\mathcal{E}_{i}$ is an Ulrich bundle for $1\leq i \leq m$.
In particular, if $\mathcal{E}$ is a $strictly$ $semistable$ Ulrich bundle of rank $r\geq2$, then there exist a subbundle $\mathcal{F}$ of $\mathcal{E}$ having rank $s<r$ which is Ulrich.
\end{lemma}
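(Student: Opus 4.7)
The plan is to verify the two conditions of Proposition~\ref{hilbert polynoial condition}---the Ulrich Hilbert polynomial and the ACM property---for each term $\mathcal{E}_i$ of the filtration.

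For the Hilbert polynomial, I would observe that since $\mathcal{E}$ is Ulrich it is semistable by Theorem~\ref{stability for Ulrich}, so its reduced Hilbert polynomial is $p_\mathcal{E}(t)=c\binom{t+k}{k}$. By the defining property of a Jordan-H\"older filtration of a semistable sheaf, each stable factor $G_i=\mathcal{E}_i/\mathcal{E}_{i-1}$ shares this reduced Hilbert polynomial, so $P_{G_i}(t)=c\cdot\operatorname{rank}(G_i)\binom{t+k}{k}$; additivity of Hilbert polynomials in short exact sequences then gives $P_{\mathcal{E}_i}(t)=c\cdot\operatorname{rank}(\mathcal{E}_i)\binom{t+k}{k}$, which is the Ulrich Hilbert polynomial of rank $\operatorname{rank}(\mathcal{E}_i)$.

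For the ACM property I would induct on the length $m$ of the filtration; the case $m=1$ is trivial. For $m\geq 2$ I consider the short exact sequence $0\to\mathcal{E}_{m-1}\to\mathcal{E}\to G_m\to 0$ and twist by $\mathcal{O}(-j)$ for $j=1,\dots,k$. Using the equivalent cohomological characterization of an Ulrich bundle ($H^{*}(\mathcal{E}(-j))=0$ for $j=1,\dots,k$) together with the long exact sequence, one obtains $H^0(\mathcal{E}_{m-1}(-j))=0$ and isomorphisms $H^\ell(G_m(-j))\cong H^{\ell+1}(\mathcal{E}_{m-1}(-j))$ for all $\ell$. I would then combine these with the Euler-characteristic identities $\chi(\mathcal{E}_{m-1}(-j))=\chi(G_m(-j))=0$ and with the Ulrich self-duality of Lemma~\ref{Ulrich duailty} applied to the dual sequence $0\to G_m^\vee(3)\to\mathcal{E}^\vee(3)\to\mathcal{E}_{m-1}^\vee(3)\to 0$; translating the resulting dual vanishings back through Serre duality on the Fano threefold (using $K_Y=-H_Y$) produces enough independent relations among the Betti numbers to force every middle cohomology group of $\mathcal{E}_{m-1}(-j)$ and $G_m(-j)$ to vanish. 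Since extensions of Ulrich bundles are Ulrich, the inductive hypothesis applied to $\mathcal{E}_{m-1}$, whose JH filtration has length $m-1$, then makes each $\mathcal{E}_i$ Ulrich.

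The ``in particular'' claim is immediate by taking $\mathcal{F}=\mathcal{E}_1=G_1$: this is a stable Ulrich subbundle, and its rank $\operatorname{rank}(G_1)<r$ strictly because $\mathcal{E}$ is assumed strictly semistable, so the JH filtration has length at least two. The main obstacle I foresee is the cohomological bookkeeping in the inductive step: the long exact sequence and Euler-characteristic identities alone yield only a single linear relation among the Betti numbers $h^\ell(\mathcal{E}_{m-1}(-j))$ and $h^\ell(G_m(-j))$, so pinning down all the individual vanishings requires the extra independent constraints coming from Ulrich self-duality and Serre duality, together with the semistability-forced vanishings $H^0(\mathcal{E}_{m-1}(-j))=0$ for $j$ beyond the Ulrich slope.
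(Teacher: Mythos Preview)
The paper does not prove this lemma; it cites \cite[Lemma~2.15]{CKM12}. Your Hilbert-polynomial step is correct, but the ACM step contains a genuine gap. The ``extra independent constraints'' you hope to extract from Ulrich self-duality and Serre duality are in fact \emph{not} independent: if you dualize $0\to\mathcal{E}_{m-1}\to\mathcal{E}\to G_m\to 0$ (already assuming $G_m$ locally free, which is not given), tensor by $\omega_X(k+1)$ so that the middle term is again Ulrich, and then translate the resulting vanishings back through Serre duality, you recover precisely the same identities $h^{i}(G_m(-j))=h^{i+1}(\mathcal{E}_{m-1}(-j))$ and boundary vanishings you already had from the original sequence. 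Concretely, on a threefold one is left with three unknowns $a_j=h^1(\mathcal{E}_{m-1}(-j))$, $b_j=h^2(\mathcal{E}_{m-1}(-j))$, $c_j=h^3(\mathcal{E}_{m-1}(-j))$ and the single relation $a_j-b_j+c_j=0$ from $\chi=0$; the dual sequence returns only tautologies $a_j=a_j$, $b_j=b_j$, $c_j=c_j$. The additional ``semistability-forced'' vanishings of $H^0$ you invoke are already contained in $H^0(\mathcal{E}_{m-1}(-j))\subset H^0(\mathcal{E}(-j))=0$ and do not touch the middle cohomology. So the linear algebra is underdetermined and the argument does not close.

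The argument in \cite{CKM12} (and in \cite{CHGS12}) avoids this by using the defining finite projection $\pi:X\to\mathbb{P}^k$. Pushing forward the filtration one lands inside $\pi_*\mathcal{E}\cong\mathcal{O}_{\mathbb{P}^k}^{cr}$, and each $\pi_*\mathcal{E}_i$, $\pi_*G_i$ is a semistable sheaf on $\mathbb{P}^k$ with reduced Hilbert polynomial $\binom{t+k}{k}$; since the only stable sheaf with that polynomial is $\mathcal{O}_{\mathbb{P}^k}$ and $\mathrm{Ext}^1(\mathcal{O},\mathcal{O})=0$, every such sheaf is trivial. Hence each $G_i$ and $\mathcal{E}_i$ is Ulrich by definition, and local freeness follows because Ulrich sheaves are maximal Cohen--Macaulay and $X$ is smooth. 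That structural input on $\mathbb{P}^k$ is what your purely cohomological approach is missing.
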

 \begin{proof}
See \cite[Lemma 2.15]{CKM12}.
 \end{proof}
\begin{definition}
Let $E$ be a nontrivial locally free sheaf on $X$. The trace map $tr:End(E) \rightarrow \mathcal{O}_{X}$ induces $tr^{i}:Ext^{i}(E,E)\cong H^{i}(End(E)) \rightarrow H^{i}(\mathcal{O}_{X})$. These homomorphisms are surjective. Let $Ext^{i}(E,E)_{o}$ denote the kernel of $tr^{i}$.
\end{definition}

\begin{proposition} \label{ext sub o}
If $E$ is locally free sheaf on $Y$, then $Ext^{i}(E,E)_{o}=Ext^{i}(E,E)$ for $0<i<3$.
\end{proposition}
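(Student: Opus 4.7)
The plan is to show that the trace map $tr^{i} \colon \operatorname{Ext}^{i}(E,E) \to H^{i}(\mathcal{O}_{Y})$ is identically zero for $i = 1, 2$, which by the very definition of $\operatorname{Ext}^{i}(E,E)_{o}$ as $\ker(tr^{i})$ will immediately give the equality $\operatorname{Ext}^{i}(E,E)_{o} = \operatorname{Ext}^{i}(E,E)$. Clearly the cheapest way to force $tr^{i}$ to vanish is to show its target $H^{i}(Y,\mathcal{O}_{Y})$ is zero.

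Concretely, I would compute $H^{i}(Y, \mathcal{O}_{Y})$ via the Leray spectral sequence associated to the blow-down map $f \colon Y \to \mathbb{P}^{3}$. Applying Corollary \ref{direct image of exceptional divisor} with $m = 0$ gives $f_{*}\mathcal{O}_{Y} = \mathcal{O}_{\mathbb{P}^{3}}$ and $R^{j}f_{*}\mathcal{O}_{Y} = 0$ for $j > 0$. By Corollary \ref{Leray spectral sequence}, this yields the isomorphism
\[
H^{i}(Y, \mathcal{O}_{Y}) \cong H^{i}(\mathbb{P}^{3}, \mathcal{O}_{\mathbb{P}^{3}})
\]
for every $i \geq 0$. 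The cohomology of $\mathcal{O}_{\mathbb{P}^{3}}$ is well known to vanish in degrees $1$ and $2$, so $H^{1}(Y, \mathcal{O}_{Y}) = H^{2}(Y, \mathcal{O}_{Y}) = 0$.

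Once this vanishing is in hand, the trace maps $tr^{1}$ and $tr^{2}$ land in the zero group and therefore vanish tautologically, forcing $\operatorname{Ext}^{i}(E,E)_{o} = \operatorname{Ext}^{i}(E,E)$ for $i = 1, 2$ and completing the proof. There is no real obstacle here; the statement is essentially a direct consequence of the rationality of $Y$ (its being a blow-up of $\mathbb{P}^{3}$) together with the definition of $\operatorname{Ext}^{i}(E,E)_{o}$. The only minor point to be careful about is confirming that the surjectivity of $tr^{i}$ (mentioned in the definition preceding the statement) together with target zero really does force the kernel to coincide with the whole group, but this is immediate.
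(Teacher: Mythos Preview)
Your proposal is correct and follows essentially the same approach as the paper: both arguments reduce to the vanishing $H^{i}(Y,\mathcal{O}_{Y})=0$ for $i=1,2$ and then observe that the kernel of a map to the zero group is the whole source. The only difference is cosmetic: the paper simply asserts this vanishing (it is recorded separately as Proposition~\ref{Y is ACM}), whereas you derive it explicitly via the Leray spectral sequence for the blow-down $f\colon Y\to\mathbb{P}^{3}$.
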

 \begin{proof}
 Note that $H^{i}(Y,\mathcal{O}_{Y})=0$ for $0<i<3$. So the kernel of $tr^{i}$ is $Ext^{i}(E,E)$ for $0<i<3$.
 \end{proof}

We want to analyze the local dimension of Quot scheme. For this, we will follow the discussion and the notation of \cite[Section 4.3]{HL10}.\\
Let $F$ be semistable sheaf on $X$.
Let $m$ be a sufficiently large integer such that $F(m)$ is globally generated, $V$ be a vector space of dimension $P_{X}(m)$ and $\mathcal{H}:=V\otimes_{k} \mathcal{O}_{X}(-m)$. Let $R\subset Quot(\mathcal{H},P)$ be the open subscheme of those quotients $[\rho: \mathcal{H}\rightarrow \mathcal{E}]$ where $V\rightarrow H^{0}(\mathcal{E})$ is an isomorphism.
\begin{proposition} \label{Y is ACM}
$H^{i}(Y, \mathcal{O}_{Y})\cong 0$ for $i>0$.
\end{proposition}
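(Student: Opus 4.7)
The plan is to reduce the cohomology computation on the blow-up $Y$ to a cohomology computation on $\mathbb{P}^3$ via the Leray spectral sequence for the blow-down map $f: Y \to \mathbb{P}^3$. The statement then follows from the standard vanishing $H^i(\mathbb{P}^3, \mathcal{O}_{\mathbb{P}^3}) = 0$ for $i > 0$.

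More precisely, first I would invoke Corollary \ref{direct image of exceptional divisor} with $m = 0$ to obtain the two direct image computations
\[
f_{*}\mathcal{O}_{Y} = I_{C}^{0} = \mathcal{O}_{\mathbb{P}^{3}}, \qquad R^{i}f_{*}\mathcal{O}_{Y} = 0 \ \text{ for } i > 0.
\]
(The second vanishing is the key input; it reflects the fact that the fibers of the blow-down are either points or projective lines, on which $\mathcal{O}$ has no higher cohomology.) Next, I would apply Corollary \ref{Leray spectral sequence}: since all higher direct images vanish, the Leray spectral sequence degenerates and yields an isomorphism
\[
H^{i}(Y, \mathcal{O}_{Y}) \cong H^{i}(\mathbb{P}^{3}, f_{*}\mathcal{O}_{Y}) = H^{i}(\mathbb{P}^{3}, \mathcal{O}_{\mathbb{P}^{3}})
\]
for every $i \geq 0$. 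Finally, for $i > 0$ the right-hand side vanishes by the standard cohomology of projective space, and the proposition follows.

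There is no real obstacle here; the statement is essentially a formal consequence of machinery already established in the Preliminaries section. The only thing to double-check is that Corollary \ref{direct image of exceptional divisor} is indeed applicable with $m = 0$, interpreting $I_{C}^{0}$ as $\mathcal{O}_{\mathbb{P}^{3}}$, which is the standard convention.
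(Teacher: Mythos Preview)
Your argument is correct. The paper's own proof is simply a one-line citation to Hironaka \cite[p.~153]{Hir64}, whereas you spell out the same underlying mechanism---vanishing of the higher direct images $R^{i}f_{*}\mathcal{O}_{Y}$ together with the Leray spectral sequence---using the paper's own Corollary~\ref{direct image of exceptional divisor} and Corollary~\ref{Leray spectral sequence}. Your version has the virtue of being self-contained within the machinery already developed in the paper, while the paper's citation covers the result in greater generality (arbitrary smooth blow-ups); mathematically the two are the same argument.
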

\begin{proof}
See \cite[p. 153]{Hir64}.
\end{proof}

\begin{theorem} \label{dimension of Quot scheme at Ulrich bundle with 3H}
Let $\mathcal{E}$ be a rank 2 simple Ulrich bundle on $Y$, with $c_{1}(\mathcal{E})=12\widetilde{h}-3e$ and $c_{2}(\mathcal{E})=27\widetilde{h}^{2}-9\widetilde{h}e$. Then $\dim_{[\rho]}R \geq 1614$ for a fixed $[\rho: \mathcal{H}\rightarrow \mathcal{E}]$.
\end{theorem}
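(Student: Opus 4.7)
The plan is to apply Proposition \ref{general bounded dimension of quot scheme} to a carefully chosen quotient $[\rho]$ and reduce the required lower bound to an explicit numerical calculation via a long exact sequence of $\mathrm{Ext}$'s, where every term is either directly controlled by the Ulrich property of $\mathcal{E}$ or by Theorem \ref{quot dimension 15}.

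First I would specialize the Quot scheme setup to the Ulrich case. Since $\mathcal{E}$ is Ulrich on $Y$ (a threefold of degree $H^{3}=20$), the pushforward under a linear projection $\pi:Y\to\mathbb{P}^{3}$ is $\mathcal{O}_{\mathbb{P}^{3}}^{\oplus 40}$, which immediately forces $h^{0}(\mathcal{E})=cr=40$ and $h^{i}(\mathcal{E})=0$ for $i\geq 1$. By Proposition \ref{Ulrich bundles are globally generated}, $\mathcal{E}$ is globally generated, so we may take $m=0$ in the setup of the preceding subsection: set $V:=H^{0}(\mathcal{E})$ (dimension $40$) and $\mathcal{H}:=V\otimes\mathcal{O}_{Y}$, with $\rho:\mathcal{H}\twoheadrightarrow\mathcal{E}$ the evaluation map. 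Let $K:=\ker(\rho)$, so we have the short exact sequence
\[
0\to K\to\mathcal{H}\to\mathcal{E}\to 0.
\]

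Next I would apply $\mathrm{Hom}(-,\mathcal{E})$ to this sequence and exploit the fact that $\mathrm{Ext}^{i}(\mathcal{H},\mathcal{E})=V^{\vee}\otimes H^{i}(\mathcal{E})$. The vanishings above give $\hom(\mathcal{H},\mathcal{E})=(\dim V)^{2}=1600$ and $\mathrm{Ext}^{i}(\mathcal{H},\mathcal{E})=0$ for all $i\geq 1$. Consequently the long exact sequence of Ext collapses into the two short pieces
\[
0\to\mathrm{Hom}(\mathcal{E},\mathcal{E})\to\mathrm{Hom}(\mathcal{H},\mathcal{E})\to\mathrm{Hom}(K,\mathcal{E})\to\mathrm{Ext}^{1}(\mathcal{E},\mathcal{E})\to 0
\]
and $\mathrm{Ext}^{1}(K,\mathcal{E})\cong\mathrm{Ext}^{2}(\mathcal{E},\mathcal{E})$. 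Taking dimensions yields
\[
\hom(K,\mathcal{E})-\mathrm{ext}^{1}(K,\mathcal{E})=\hom(\mathcal{H},\mathcal{E})-\hom(\mathcal{E},\mathcal{E})+\mathrm{ext}^{1}(\mathcal{E},\mathcal{E})-\mathrm{ext}^{2}(\mathcal{E},\mathcal{E}).
\]

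Finally I would plug in the known values: $\hom(\mathcal{E},\mathcal{E})=1$ because $\mathcal{E}$ is simple, and $\mathrm{ext}^{1}(\mathcal{E},\mathcal{E})-\mathrm{ext}^{2}(\mathcal{E},\mathcal{E})=h^{1}(\mathcal{E}\otimes\mathcal{E}^{\vee})-h^{2}(\mathcal{E}\otimes\mathcal{E}^{\vee})=15$ by Theorem \ref{quot dimension 15}. This gives $\hom(K,\mathcal{E})-\mathrm{ext}^{1}(K,\mathcal{E})=1600-1+15=1614$, and Proposition \ref{general bounded dimension of quot scheme} then yields $\dim_{[\rho]}Quot(\mathcal{H},P)\geq 1614$. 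Since $R$ is an open subscheme of $Quot(\mathcal{H},P)$ and contains $[\rho]$, the local dimensions agree, so $\dim_{[\rho]}R\geq 1614$. The only delicate point is the verification that all cohomologies $H^{i}(\mathcal{E})$ (including $H^{3}(\mathcal{E})$) vanish for $i>0$, but this is automatic from the Ulrich characterization via the linear projection to $\mathbb{P}^{3}$; no genuine obstacle arises.
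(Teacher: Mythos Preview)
Your argument is correct and follows essentially the same route as the paper: both set $m=0$, use the short exact sequence $0\to K\to\mathcal{H}\to\mathcal{E}\to 0$, apply $\mathrm{Hom}(-,\mathcal{E})$, exploit the vanishing of $\mathrm{Ext}^{i}(\mathcal{H},\mathcal{E})$ for $i\geq 1$, and combine the Quot scheme dimension bound with Theorem~\ref{quot dimension 15}. Your computation of $\mathrm{Ext}^{i}(\mathcal{H},\mathcal{E})\cong V^{\vee}\otimes H^{i}(\mathcal{E})$ directly from the Ulrich condition is in fact more streamlined than the paper's detour through $H^{i}(K)$, $\mathrm{Ext}^{i}(\mathcal{H},K)$, and Proposition~\ref{Y is ACM}; and while the paper invokes the refined obstruction bound $\dim_{[\rho]}R\geq \hom(K,\mathcal{E})-\mathrm{ext}^{2}(\mathcal{E},\mathcal{E})_{o}$ rather than Proposition~\ref{general bounded dimension of quot scheme}, the two bounds coincide here since $\mathrm{Ext}^{1}(K,\mathcal{E})\cong\mathrm{Ext}^{2}(\mathcal{E},\mathcal{E})=\mathrm{Ext}^{2}(\mathcal{E},\mathcal{E})_{o}$.
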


 \begin{proof}
We will follow the construction in \cite[p.115]{HL10}.\\
First, note that $\mathcal{E}$ is semistable by Theorem \ref{stability for Ulrich}. Second, $\mathcal{E}$ is globally generated by Proposition \ref{Ulrich bundles are globally generated}.\\
So $V$ is a vector space of dimension 40, since $P_{Y}(0)=20 \cdot 2\binom{3+0}{3}=40$.\\
 Then $\mathcal{H}:=V\otimes_{\mathbb{K}} \mathcal{O}_{Y}=\mathcal{O}_{Y}^{\oplus 40}$.\\
Fix $[\rho: \mathcal{H}\rightarrow \mathcal{E}]\in R$.
 \begin{enumerate}
   \item Let $K$ be the kernel of $\rho$; that is, we have
		 \[0 \rightarrow K \rightarrow \mathcal{H} \rightarrow \mathcal{E} \rightarrow 0.
	 \]
	Then we have the long exact sequence of cohomology
\begin{eqnarray*}
0 &\rightarrow& H^{0}(Y,K) \rightarrow H^{0}(Y,\mathcal{H}) \rightarrow H^{0}(Y,\mathcal{E})\\
&\rightarrow& H^{1}(Y,K) \rightarrow H^{1}(Y,\mathcal{H}) \rightarrow H^{1}(Y,\mathcal{E})\\
&\rightarrow& H^{2}(Y,K) \rightarrow H^{2}(Y,\mathcal{H}) \rightarrow H^{2}(Y,\mathcal{E})\\
&\rightarrow& H^{3}(Y,K) \rightarrow H^{3}(Y,\mathcal{H}) \rightarrow H^{3}(Y,\mathcal{E})\rightarrow 0. 
\end{eqnarray*}	
Since $\mathcal{H}=\mathcal{O}_{Y}^{\oplus 40}$ and $\mathcal{E}$ is globally generated by Proposition \ref{Ulrich bundles are globally generated},\\
$H^{0}(Y,\mathcal{H})\cong H^{0}(Y,\mathcal{E})$. So $H^{0}(Y,K)\cong 0$. Then, since $Hom(\mathcal{H},K)\cong Hom(\mathcal{O}_{Y},K)^{\oplus 40} \cong H^{0}(Y,K)^{\oplus 40}$, $Hom(\mathcal{H},K) \cong 0$.\\
Since $H^{1}(Y,\mathcal{H})\cong H^{1}(Y,\mathcal{O}_{Y})^{\oplus 40}\cong 0$ by Proposition \ref{Y is ACM}  and $H^{0}(Y,\mathcal{H})\cong H^{0}(Y,\mathcal{E})$, $H^{1}(Y,K)\cong 0$. Then, since $Ext^{1}(\mathcal{H},K)\cong Ext^{1}(\mathcal{O}_{Y},K)^{\oplus 40}\cong H^{1}(Y,K)^{\oplus 40}$, $Ext^{1}(\mathcal{H},K) \cong 0$.\\
Since $H^{2}(Y,\mathcal{H})\cong H^{2}(Y,\mathcal{O}_{Y})^{\oplus 40}\cong 0$ by Proposition \ref{Y is ACM} and $H^{1}(Y,\mathcal{E})\cong 0$ by being that $\mathcal{E}$ is Ulrich, $H^{2}(Y,K)\cong 0$. Then, since $Ext^{2}(\mathcal{H},K)\cong Ext^{2}(\mathcal{O}_{Y},K)^{\oplus 40}\cong H^{2}(Y,K)^{\oplus 40}$, $Ext^{2}(\mathcal{H},K) \cong 0$.\\
Since $H^{3}(Y,\mathcal{H})\cong H^{3}(Y,\mathcal{O}_{Y})^{\oplus 40}\cong 0$ by Proposition \ref{Y is ACM} and $H^{2}(Y,\mathcal{E})\cong 0$ by being that $\mathcal{E}$ is Ulrich, $H^{3}(Y,K)\cong 0$. Then, since $Ext^{3}(\mathcal{H},K)\cong Ext^{3}(\mathcal{O}_{Y},K)^{\oplus 40}\cong H^{3}(Y,K)^{\oplus 40}$, $Ext^{3}(\mathcal{H},K) \cong 0$.\\
Hence $Hom(\mathcal{H},K)\cong0$ and $Ext^{i}(\mathcal{H},K) \cong 0$ for $i>0$.
\item Consider the short exact sequence 
\[0 \rightarrow K \rightarrow \mathcal{H} \rightarrow \mathcal{E} \rightarrow 0.
	 \]
Then take the functor $Hom(\mathcal{H},-)$
\begin{eqnarray*}
0 &\rightarrow& Hom(\mathcal{H},K) \rightarrow Hom (\mathcal{H},\mathcal{H}) \rightarrow Hom(\mathcal{H},\mathcal{E})\\
&\rightarrow& Ext^{1}(\mathcal{H},K) \rightarrow Ext^{1}(\mathcal{H},\mathcal{H}) \rightarrow Ext^{1}(\mathcal{H},\mathcal{E})\\
&\rightarrow& Ext^{2}(\mathcal{H},K) \rightarrow Ext^{2}(\mathcal{H},\mathcal{H}) \rightarrow Ext^{2}(\mathcal{H},\mathcal{E})\\
&\rightarrow& Ext^{3}(\mathcal{H},K) \rightarrow Ext^{3}(\mathcal{H},\mathcal{H}) \rightarrow Ext^{3}(\mathcal{H},\mathcal{E})\rightarrow 0.
\end{eqnarray*}
By step (1), we know that $Hom(\mathcal{H},K)\cong0$ and $Ext^{i}(\mathcal{H},K) \cong 0$ for $i>0$. So, $Hom(\mathcal{H},\mathcal{H})\cong Hom(\mathcal{H},\mathcal{E})$ and $Ext^{i}(\mathcal{H},\mathcal{H}) \cong Ext^{i}(\mathcal{H},\mathcal{E})$ for $i>0$.\\
On the other hand, $Ext^{i}(\mathcal{H},\mathcal{H})\cong Ext^{i}(\mathcal{O}^{\oplus 40}_{Y},\mathcal{O}^{\oplus 40}_{Y}) \cong H^{i}(Y,\mathcal{O}_{Y})^{\oplus 1600}$ for $i>0$. Since $H^{i}(Y,\mathcal{O}_{Y})\cong 0$ for $i>0$ by Proposition \ref{Y is ACM}, $Ext^{i}(\mathcal{H},\mathcal{H})\cong 0$.\\
Hence $Hom(\mathcal{H},\mathcal{H})\cong Hom(\mathcal{H},\mathcal{E})$ and $Ext^{i}(\mathcal{H},\mathcal{E})=0$, $i>0$.
 \item Again consider the short exact sequence 
		 \[0 \rightarrow K \rightarrow \mathcal{H} \rightarrow \mathcal{E} \rightarrow 0.
	 \]
Then take the functor $Hom(-,\mathcal{E})$ of it
\begin{eqnarray*}
0 &\rightarrow& Hom(\mathcal{E},\mathcal{E}) \rightarrow Hom (\mathcal{H},\mathcal{E}) \rightarrow Hom(K,\mathcal{E})\\
&\rightarrow& Ext^{1}(\mathcal{E},\mathcal{E}) \rightarrow Ext^{1}(\mathcal{H},\mathcal{E})=0 \rightarrow \ldots
\end{eqnarray*}
leads to equality $hom(K,\mathcal{E})=hom(\mathcal{H},\mathcal{E})+ext^{1}(\mathcal{E},\mathcal{E})-hom(\mathcal{E},\mathcal{E})$.\\
Since $Ext^{i}(\mathcal{H},\mathcal{E})=0$ for $i>0$, $Ext^{i}(K,\mathcal{E})\cong Ext^{i+1}(\mathcal{E},\mathcal{E})$ for $i>0$.
  \item The boundary map $Ext^{1}(K,\mathcal{E})\rightarrow Ext^{2}(\mathcal{E},\mathcal{E})$ maps the obstruction to extend $[\rho]$ onto the obstructions to extend $[\mathcal{E}]$ (see \cite[2.A.8]{HL10}). The latter is contained in the subspace $Ext^{2}(\mathcal{E},\mathcal{E})_{o}$. This gives the dimension bound, using Proposition \ref{general bounded dimension of quot scheme},
		\[\dim_{[\rho]}R \geq hom(K,\mathcal{E})-ext^{2}(\mathcal{E},\mathcal{E})_{o}.
	\]
	Then, by step (3), we have
		\[\dim_{[\rho]}R \geq hom(\mathcal{H},\mathcal{E})+ext^{1}(\mathcal{E},\mathcal{E})-hom(\mathcal{E},\mathcal{E})-ext^{2}(\mathcal{E},\mathcal{E})_{o}.
	\]
	Then, by Proposition \ref{ext sub o}
		\[\dim_{[\rho]}R \geq hom(\mathcal{H},\mathcal{E})+ext^{1}(\mathcal{E},\mathcal{E})-hom(\mathcal{E},\mathcal{E})-ext^{2}(\mathcal{E},\mathcal{E}).
	\]
	Then, by step (2), we have
		\[\dim_{[\rho]}R \geq hom(\mathcal{H},\mathcal{H})+ext^{1}(\mathcal{E},\mathcal{E})-hom(\mathcal{E},\mathcal{E})-ext^{2}(\mathcal{E},\mathcal{E}).
	\]
	Since $\mathcal{E}$ is simple and $\mathcal{H}=\mathcal{O}_{Y}^{\oplus 40}$, we have
		\[\dim_{[\rho]}R \geq 1600+ext^{1}(\mathcal{E},\mathcal{E})-1-ext^{2}(\mathcal{E},\mathcal{E}).
	\]
	Then, by Theorem \ref{quot dimension 15} and the equality $h^{i}(\mathcal{E} \otimes \mathcal{E}^{\vee})=ext^{i}(\mathcal{E},\mathcal{E})$, we have
		\[\dim_{[\rho]}R \geq 1600+-1+15 = 1614.
	\]
\end{enumerate}
\end{proof}
Let $R^{'}\subset Quot(\mathcal{H},P)$ be the subset parametrizing the quotients $[\rho: \mathcal{H}\rightarrow \mathcal{E}]$ where $\mathcal{E}$ is obtained as an extension of $L_{2}$ by $L_{1}$.
\begin{proposition} \label{dimension of quot scheme by extensions}
$\dim_{[\rho]}R^{'} = 1606$ for a fixed $[\rho: \mathcal{H}\rightarrow \mathcal{E}]$.
\end{proposition}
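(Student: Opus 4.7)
The plan is to stratify $R'$ by the isomorphism class of its Ulrich quotient $\mathcal{E}$, writing $\dim R' = \dim \mathcal{M} + \text{(generic fiber dimension)}$, where $\mathcal{M}$ parametrizes the iso classes of non-split extensions of $L_{2}$ by $L_{1}$.

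First I would compute $\dim \mathcal{M} = 7$. The key input is $\mathrm{Hom}(L_{1}, L_{2}) = 0$, which follows from a short Leray spectral sequence argument: $L_{1}^{\vee} \otimes L_{2} = \mathcal{O}_{Y}(-6\widetilde{h} + 3e)$ pushes forward to $\mathcal{O}_{\mathbb{P}^{3}}(-6)$ via the projection formula together with the identity $f_{\ast}\mathcal{O}_{Y}(3e) = \mathcal{O}_{\mathbb{P}^{3}}$, so $H^{0}(Y, L_{1}^{\vee} \otimes L_{2}) = H^{0}(\mathbb{P}^{3}, \mathcal{O}(-6)) = 0$. Applying $\mathrm{Hom}(L_{1}, -)$ to the extension sequence yields $\mathrm{Hom}(L_{1}, \mathcal{E}) \cong \mathbb{K}$, so the subsheaf $L_{1} \subset \mathcal{E}$ is canonical up to scalar; consequently any isomorphism $\mathcal{E}_{\xi_{1}} \cong \mathcal{E}_{\xi_{2}}$ restricts to an automorphism of $L_{1}$ and descends to one of $L_{2}$. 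This shows iso classes of non-split extensions are exactly the orbits of the natural free $\mathbb{K}^{\ast}$-action on $\mathrm{Ext}^{1}(L_{2}, L_{1}) \setminus \{0\}$ induced by $(\mathrm{Aut}(L_{1}) \times \mathrm{Aut}(L_{2}))/\mathbb{K}^{\ast}_{\mathrm{diag}}$, giving $\dim \mathcal{M} = \mathrm{ext}^{1}(L_{2}, L_{1}) - 1 = 7$ by Corollary \ref{extension of line bundles}.

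Next I would show the generic fiber has dimension $1599$. Two surjections $\rho_{1}, \rho_{2}: \mathcal{H} \rightarrow \mathcal{E}$ give the same point of the Quot scheme iff they differ by post-composition with an automorphism of $\mathcal{E}$. Since $\mathcal{H} = \mathcal{O}_{Y}^{\oplus 40}$ and $h^{0}(\mathcal{E}) = 2 \cdot \deg Y = 40 = \dim V$, such surjections correspond to isomorphisms $V \rightarrow H^{0}(\mathcal{E})$, a $GL(V)$-torsor of dimension $1600$. By Theorem \ref{simple rank 2 Ulrich}, $\mathcal{E}$ is simple, so $\mathrm{Aut}(\mathcal{E}) = \mathbb{K}^{\ast}$ has dimension $1$, whence the fiber has dimension $1599$. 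Combining these gives $\dim_{[\rho]} R' = 7 + 1599 = 1606$.

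The principal technical obstacle is making the quotient by the scaling actions scheme-theoretically precise so that the fiber-base decomposition is rigorous; this is standard once one uses that $\mathcal{M}$ admits a coarse moduli description via $\mathbb{P}(\mathrm{Ext}^{1}(L_{2}, L_{1}))$ and that the stratum of $R$ lying over $\mathcal{M}$ is a $GL(V)/\mathbb{K}^{\ast}$-torsor. Everything else reduces to the vanishing $\mathrm{Hom}(L_{1}, L_{2}) = 0$ and the simplicity of the generic extension, both of which are already in hand.
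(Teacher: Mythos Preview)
Your proposal is correct and follows essentially the same approach as the paper: both decompose $\dim R'$ as $\dim\mathbb{P}(\mathrm{Ext}^{1}(L_{2},L_{1})) + \dim PGL(V) = 7 + 1599 = 1606$. The paper's proof is a two-line sketch that simply asserts this decomposition, whereas you supply the supporting details the paper leaves implicit---notably the vanishing $\mathrm{Hom}(L_{1},L_{2})=0$ (so that distinct projective extension classes give non-isomorphic bundles) and the simplicity of $\mathcal{E}$ (so that the fiber over each isomorphism class is a $PGL(V)$-torsor).
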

 \begin{proof}
 The projectivization of $Ext^{1}(L_{2},L_{1})$ has dimension $8-1=7$ by Corollary \ref{extension of line bundles}. $R^{'}$ is the union of all orbits of extensions of $L_{2}$ by $L_{1}$ under the action of $PGL(V)$, so around each fixed $[\rho: \mathcal{H}\rightarrow \mathcal{E}]$,  $\dim_{[\rho]}R^{'} = 1599+7=1606$.
 \end{proof}
Let $R^{''}\subset Quot(\mathcal{H},P)$ be the subset parametrizing the quotients $[\rho: \mathcal{H}\rightarrow \mathcal{E}]$ where $\mathcal{E}$ is obtained as an extension of $L_{1}$ by $L_{2}$.
\begin{proposition} \label{dimension of quot scheme by extensions 2}
$\dim_{[\rho]}R^{''} \leq 1606$ for a fixed $[\rho: \mathcal{H}\rightarrow \mathcal{E}]$.
\end{proposition}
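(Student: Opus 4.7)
The plan is to mirror the argument of Proposition \ref{dimension of quot scheme by extensions} exactly, but using Corollary \ref{extension of line bundles 2} in place of Corollary \ref{extension of line bundles}. The key point is that the only substantive input was the dimension of $\operatorname{Ext}^1$ between the two Ulrich line bundles, and the roles of $L_1$ and $L_2$ are interchangeable in the ambient geometric setup.

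More precisely, I would first invoke Corollary \ref{extension of line bundles 2} to obtain the bound $\operatorname{ext}^1(L_1,L_2)\leq 8$, so that the projectivization $\mathbb{P}(\operatorname{Ext}^1(L_1,L_2))$ parametrizing non-isomorphic non-split extensions of $L_1$ by $L_2$ has dimension at most $7$. Next, I would observe that each such extension $\mathcal{E}$ has the same Hilbert polynomial as in Theorem \ref{simple rank 2 Ulrich} and is globally generated with $h^0(\mathcal{E})=40=\dim V$, so its $PGL(V)$-orbit in $R$ has dimension $\dim PGL(V)=40^2-1=1599$ (the stabilizer being trivial, since $\mathcal{E}$ is simple by the same argument given for Theorem \ref{simple rank 2 Ulrich}).

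Finally, I would note that $R''$ is set-theoretically the union of these $PGL(V)$-orbits indexed by points of $\mathbb{P}(\operatorname{Ext}^1(L_1,L_2))$, so around any fixed $[\rho:\mathcal{H}\to\mathcal{E}]\in R''$ we have
\[
\dim_{[\rho]}R'' \;\leq\; \dim PGL(V) + \dim \mathbb{P}(\operatorname{Ext}^1(L_1,L_2)) \;\leq\; 1599 + 7 \;=\; 1606,
\]
which is the desired bound. No step presents a real obstacle: the only non-formal input is Corollary \ref{extension of line bundles 2}, and the inequality (as opposed to the equality in Proposition \ref{dimension of quot scheme by extensions}) is forced by the fact that we only have an upper bound on $\operatorname{ext}^1(L_1,L_2)$ rather than an exact value.
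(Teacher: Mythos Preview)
Your proposal is correct and follows essentially the same approach as the paper: invoke Corollary \ref{extension of line bundles 2} to bound $\dim \mathbb{P}(\operatorname{Ext}^1(L_1,L_2)) \leq 7$, then add the orbit dimension $\dim PGL(V)=1599$. Your write-up is in fact more explicit than the paper's, spelling out why the orbit has dimension exactly $1599$ (trivial stabilizer from simplicity of $\mathcal{E}$).
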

 \begin{proof}
 The projectivization of $Ext^{1}(L_{1},L_{2})$ has dimension $\leq8-1=7$ by Corollary \ref{extension of line bundles 2}. $R^{''}$ is the union of all orbits of extensions of $L_{1}$ by $L_{2}$ under the action of $PGL(V)$, so around each fixed $[\rho: \mathcal{H}\rightarrow \mathcal{E}]$,  $\dim_{[\rho]}R^{''} \leq 1599+7=1606$.
 \end{proof}
\begin{theorem} \label{stable rank 2 Ulrich bundle}
There exist rank 2 stable Ulrich bundles with $c_{1}(\mathcal{E})=12\widetilde{h}-3e$ on a generic element of the deformation class $Y$. 
\end{theorem}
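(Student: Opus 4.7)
The plan is to deduce the existence of a stable rank $2$ Ulrich bundle by a dimension count on the Quot scheme $R$, playing the lower bound of Theorem \ref{dimension of Quot scheme at Ulrich bundle with 3H} against the upper bounds of Propositions \ref{dimension of quot scheme by extensions} and \ref{dimension of quot scheme by extensions 2}. Start from the simple Ulrich bundle $\mathcal{E}_{0}$ produced by Theorem \ref{simple rank 2 Ulrich} as a non-split extension of $L_{2}$ by $L_{1}$, and pick any base point $[\rho_{0}\colon \mathcal{H}\to\mathcal{E}_{0}]$ of $R$. Then Theorem \ref{dimension of Quot scheme at Ulrich bundle with 3H} gives $\dim_{[\rho_{0}]}R\geq 1614$, while Propositions \ref{dimension of quot scheme by extensions} and \ref{dimension of quot scheme by extensions 2} give $\dim_{[\rho_{0}]}R'\leq 1606$ and $\dim_{[\rho_{0}]}R''\leq 1606$, so $\dim_{[\rho_{0}]}(R'\cup R'')\leq 1606 < 1614\leq \dim_{[\rho_{0}]}R$. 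Consequently, any local irreducible component of $R$ through $[\rho_{0}]$ meets the open complement $R\setminus (R'\cup R'')$, so one can pick $[\rho\colon \mathcal{H}\to\mathcal{E}]\in R$ arbitrarily close to $[\rho_{0}]$ with $[\rho]\notin R'\cup R''$.

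Next I would argue that the quotient $\mathcal{E}$ at such a nearby point is still a rank $2$ Ulrich bundle with $c_{1}=12\widetilde{h}-3e$. By Proposition \ref{hilbert polynoial condition}, the Ulrich condition is the conjunction of fixing the Hilbert polynomial and of finitely many cohomology vanishings $H^{i}(\mathcal{E}(t))=0$, $0<i<3$, for the relevant values of $t$. Both are open in flat families (semicontinuity of cohomology plus constancy of Euler characteristic), so after shrinking $R$ near $[\rho_{0}]$ every quotient is Ulrich of the prescribed numerical type. By Theorem \ref{stability for Ulrich}, every such $\mathcal{E}$ is semistable.

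It remains to rule out strict semistability for the chosen $\mathcal{E}$. Suppose $\mathcal{E}$ is strictly semistable. Applying Lemma \ref{Jordan-Holder factors of Ulrich} to a Jordan--H\"{o}lder filtration of $\mathcal{E}$, one of the factors is a rank $1$ Ulrich subsheaf, hence an Ulrich line bundle. By Theorem \ref{line bundles} the only possibilities are $L_{1}$ and $L_{2}$, so $\mathcal{E}$ fits in a short exact sequence
\[
0\to L_{i}\to \mathcal{E}\to L_{j}\to 0,\qquad \{i,j\}=\{1,2\},
\]
which forces $[\rho]\in R'\cup R''$, contradicting the choice of $[\rho]$. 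Therefore $\mathcal{E}$ is stable. Because the construction of $\mathcal{E}_{0}$, the dimension estimates, and Theorem \ref{line bundles} all hold for a generic element of the deformation class $Y$ (see Remark \ref{irreducibility of hilbert subscheme} and Remark \ref{open condition}), the conclusion holds generically.

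The only delicate step is the passage from the strict inequality of local dimensions to the existence of a point of $R$ outside $R'\cup R''$ that still corresponds to an Ulrich quotient; this relies on openness of the Ulrich locus in a flat family and on the fact that $R'\cup R''$ is a proper closed subset of $R$ near $[\rho_{0}]$. Everything else is a routine combination of results already in hand: the construction in Theorem \ref{simple rank 2 Ulrich}, semistability of Ulrich bundles (Theorem \ref{stability for Ulrich}), the Jordan--H\"{o}lder analysis (Lemma \ref{Jordan-Holder factors of Ulrich}), and the classification of Ulrich line bundles (Theorem \ref{line bundles}).
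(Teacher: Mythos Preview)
Your proposal is correct and follows essentially the same approach as the paper: start from the simple Ulrich bundle of Theorem \ref{simple rank 2 Ulrich}, use openness of the Ulrich condition together with the lower bound $\dim_{[\rho_0]}R\geq 1614$ from Theorem \ref{dimension of Quot scheme at Ulrich bundle with 3H}, and compare against the upper bounds $\leq 1606$ for the strictly semistable loci $R'$, $R''$ from Propositions \ref{dimension of quot scheme by extensions} and \ref{dimension of quot scheme by extensions 2}, concluding via Lemma \ref{Jordan-Holder factors of Ulrich} and Theorem \ref{line bundles} that a generic nearby Ulrich quotient must be stable. The paper's argument is organized identically; the only cosmetic difference is that it phrases the contradiction as ``if all of $U$ were strictly semistable then $\dim U\leq 1606$'' rather than picking a specific nearby point, and it cites \cite[Proposition 2.8]{CKM13} when reducing strictly semistable bundles to extensions of Ulrich line bundles.
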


 \begin{proof}
By Theorem \ref{simple rank 2 Ulrich}, there are rank 2 simple Ulrich bundle $\mathcal{E}$ with the given Chern classes.\\
We know that the property of being Ulrich is an open condition. So there is an open subset $U$ of $R$ around $[\rho: \mathcal{H}\rightarrow \mathcal{E}]$ containing Ulrich bundles. By Theorem \ref{dimension of Quot scheme at Ulrich bundle with 3H}, $U$ has dimension at least 1614.\\
We also know that every Ulrich bundle is semistable by Theorem \ref{stability for Ulrich}. If all elements of $U$ were strictly semistable, then by Lemma \ref{Jordan-Holder factors of Ulrich} and \cite[Proposition 2.8]{CKM13}, they would be extensions of Ulrich line bundles. But there are only two Ulrich line bundles $L_{1}$ and $L_{2}$ on $Y$. So they would be extensions of $L_{2}$ by $L_{1}$ or extensions of $L_{1}$ by $L_{2}$.\\
However, the dimension of $R^{'}$ at the points that are extensions of $L_{2}$ by $L_{1}$ is 1606 by Proposition \ref{dimension of quot scheme by extensions} and the dimension of $R^{''}$ at the points that are extensions of $L_{1}$ by $L_{2}$ is at most 1606 by Proposition \ref{dimension of quot scheme by extensions 2}. Since both these dimensions are strictly smaller than 1614, not all Ulrich bundles with the given Chern classes are obtained by extensions. In other words, not all Ulrich bundles in $U$ are strictly semistable. Hence there are rank 2 stable Ulrich bundles with $c_{1}(\mathcal{E})=12\widetilde{h}-3e$. 
 \end{proof}

\begin{bibdiv}
\begin{biblist}
\bib{Bea00}{article}{
	author={Beauville, Arnaud},
	title={Determinantal hypersurfaces},
	note={Dedicated to William Fulton on the occasion of his 60th birthday},
	journal={Michigan Math. J.},
	volume={48},
	date={2000},
	pages={39--64},
	issn={0026-2285},
	review={\MR{1786479 (2002b:14060)}},
	doi={10.1307/mmj/1030132707},
}
\bib{Bea15}{article}{
	title={Ulrich bundles on abelian surfaces},
	author={Beauville, Arnaud},
	journal={arXiv preprint arXiv:1512.00992},
	year={2015}
}
\bib{BEL91}{article}{
   author={Bertram, Aaron},
   author={Ein, Lawrence},
   author={Lazarsfeld, Robert},
   title={Vanishing theorems, a theorem of Severi, and the equations
   defining projective varieties},
   journal={J. Amer. Math. Soc.},
   volume={4},
   date={1991},
   number={3},
   pages={587--602},
   issn={0894-0347},
   review={\MR{1092845 (92g:14014)}},
   doi={10.2307/2939270},
}
\bib {macp} {article}{
          author = {Bothmer, H.-Chr. Graf v.},
					author = {Geiss, Florian},
					author = {Schreyer, Frank-Olaf},
          title = {RandomSpaceCurves.m2},
          eprint = {http://www.math.uiuc.edu/Macaulay2/doc/Macaulay2-1.9/share/doc/Macaulay2/RandomSpaceCurves/html/index.html}
}
\bib{CHGS12}{article}{
   author={Casanellas, Marta},
   author={Hartshorne, Robin},
   author={Geiss, Florian},
   author={Schreyer, Frank-Olaf},
   title={Stable Ulrich bundles},
   journal={Internat. J. Math.},
   volume={23},
   date={2012},
   number={8},
   pages={1250083, 50},
   issn={0129-167X},
   review={\MR{2949221}},
   doi={10.1142/S0129167X12500838},
}
\bib{CKM12}{article}{
   author={Coskun, Emre},
   author={Kulkarni, Rajesh S.},
   author={Mustopa, Yusuf},
   title={On representations of Clifford algebras of ternary cubic forms},
   conference={
      title={New trends in noncommutative algebra},
   },
   book={
      series={Contemp. Math.},
      volume={562},
      publisher={Amer. Math. Soc., Providence, RI},
   },
   date={2012},
   pages={91--99},
   review={\MR{2905555}},
   doi={10.1090/conm/562/11132},
}
\bib{CKM13}{article}{
   author={Coskun, Emre},
   author={Kulkarni, Rajesh S.},
   author={Mustopa, Yusuf},
   title={The geometry of Ulrich bundles on del Pezzo surfaces},
   journal={J. Algebra},
   volume={375},
   date={2013},
   pages={280--301},
   issn={0021-8693},
   review={\MR{2998957}},
   doi={10.1016/j.jalgebra.2012.08.032},
}
\bib{E86}{article}{
   author={Ein, Lawrence},
   title={Hilbert scheme of smooth space curves},
   journal={Ann. Sci. \'Ecole Norm. Sup. (4)},
   volume={19},
   date={1986},
   number={4},
   pages={469--478},
   issn={0012-9593},
   review={\MR{875083 (88c:14009)}},
}
\bib{ES09}{article}{
	author={Eisenbud, David},
	author={Schreyer, Frank-Olaf},
	title={Betti numbers of graded modules and cohomology of vector bundles},
	journal={J. Amer. Math. Soc.},
	volume={22},
	date={2009},
	number={3},
	pages={859--888},
	issn={0894-0347},
	review={\MR{2505303}},
	doi={10.1090/S0894-0347-08-00620-6},
}
\bib{ES11}{article}{
   author={Eisenbud, David},
   author={Schreyer, Frank-Olaf},
   title={Boij-S\"oderberg theory},
   conference={
      title={Combinatorial aspects of commutative algebra and algebraic
      geometry},
   },
   book={
      series={Abel Symp.},
      volume={6},
      publisher={Springer, Berlin},
   },
   date={2011},
   pages={35--48},
   review={\MR{2810424}},
   doi={10.1007/978-3-642-19492-4-3},
}
\bib{DFC03}{article}{
	author={Eisenbud, David},
	author={Schreyer, Frank-Olaf},
	author={Weyman, Jerzy},
	title={Resultants and Chow forms via exterior syzygies},
	journal={J. Amer. Math. Soc.},
	volume={16},
	date={2003},
	number={3},
	pages={537--579},
	issn={0894-0347},
	review={\MR{1969204 (2004j:14067)}},
	doi={10.1090/S0894-0347-03-00423-5},
}
\bib{EI88}{article}{
   author={Ellia, Philippe},
   author={Id{\`a}, Monica},
   title={Some connections between equations and geometric properties of
   curves in ${\bf P}^3$},
   conference={
      title={Geometry and complex variables},
      address={Bologna},
      date={1988/1990},
   },
   book={
      series={Lecture Notes in Pure and Appl. Math.},
      volume={132},
      publisher={Dekker, New York},
   },
   date={1991},
   pages={177--188},
   review={\MR{1151641}},
}
\bib{FMP03}{article}{
	author={Farkas, Gavril},
	author={Musta{\c{t}}{\v{a}}, Mircea},
	author={Popa, Mihnea},
	title={Divisors on ${\scr M}_{g,g+1}$ and the minimal resolution
		conjecture for points on canonical curves},
	language={English, with English and French summaries},
	journal={Ann. Sci. \'Ecole Norm. Sup. (4)},
	volume={36},
	date={2003},
	number={4},
	pages={553--581},
	issn={0012-9593},
	review={\MR{2013926 (2005b:14051)}},
	doi={10.1016/S0012-9593(03)00022-3},
}
\bib{Ful84}{book}{
   author={Fulton, William},
   title={Intersection theory},
   series={Ergebnisse der Mathematik und ihrer Grenzgebiete (3) [Results in
   Mathematics and Related Areas (3)]},
   volume={2},
   publisher={Springer-Verlag, Berlin},
   date={1984},
   pages={xi+470},
   isbn={3-540-12176-5},
   review={\MR{732620 (85k:14004)}},
   doi={10.1007/978-3-662-02421-8},
}
\bib{CFM15}{article}{
   author={Casnati, Gianfranco},
   author={Faenzi, Daniele},
   author={Malaspina, Francesco},
   title={Rank two aCM bundles on the del Pezzo threefold with Picard number
   3},
   journal={J. Algebra},
   volume={429},
   date={2015},
   pages={413--446},
   issn={0021-8693},
   review={\MR{3320630}},
   doi={10.1016/j.jalgebra.2015.02.008},
}
\bib {mac} {article}{
          author = {Grayson, Daniel R.},
					author = {Stillman, Michael E.},
          title = {Macaulay2, a software system for research in algebraic geometry},
          eprint = {http://www.math.uiuc.edu/Macaulay2/}
        }
\bib{Har77}{book}{
   author={Hartshorne, Robin},
   title={Algebraic geometry},
   note={Graduate Texts in Mathematics, No. 52},
   publisher={Springer-Verlag, New York-Heidelberg},
   date={1977},
   pages={xvi+496},
   isbn={0-387-90244-9},
   review={\MR{0463157 (57 \#3116)}},
}
\bib{Har10}{book}{
   author={Hartshorne, Robin},
   title={Deformation theory},
   series={Graduate Texts in Mathematics},
   volume={257},
   publisher={Springer, New York},
   date={2010},
   pages={viii+234},
   isbn={978-1-4419-1595-5},
   review={\MR{2583634 (2011c:14023)}},
   doi={10.1007/978-1-4419-1596-2},
}
\bib{HUB91}{article}{
	author={Herzog, J.},
	author={Ulrich, B.},
	author={Backelin, J.},
	title={Linear maximal Cohen-Macaulay modules over strict complete
		intersections},
	journal={J. Pure Appl. Algebra},
	volume={71},
	date={1991},
	number={2-3},
	pages={187--202},
	issn={0022-4049},
	review={\MR{1117634}},
	doi={10.1016/0022-4049(91)90147-T},
}
\bib{Hir64}{article}{
   author={Hironaka, Heisuke},
   title={Resolution of singularities of an algebraic variety over a field
   of characteristic zero. I, II},
   journal={Ann. of Math. (2) {\bf 79} (1964), 109--203; ibid. (2)},
   volume={79},
   date={1964},
   pages={205--326},
   issn={0003-486X},
   review={\MR{0199184}},
}
\bib{HL10}{book}{
   author={Huybrechts, Daniel},
   author={Lehn, Manfred},
   title={The geometry of moduli spaces of sheaves},
   series={Cambridge Mathematical Library},
   edition={2},
   publisher={Cambridge University Press, Cambridge},
   date={2010},
   pages={xviii+325},
   isbn={978-0-521-13420-0},
   review={\MR{2665168 (2011e:14017)}},
   doi={10.1017/CBO9780511711985},
}
\bib{IP99}{article}{
   author={Iskovskikh, V. A.},
   author={Prokhorov, Yu. G.},
   title={Fano varieties},
   conference={
      title={Algebraic geometry, V},
   },
   book={
      series={Encyclopaedia Math. Sci.},
      volume={47},
      publisher={Springer, Berlin},
   },
   date={1999},
   pages={1--247},
   review={\MR{1668579 (2000b:14051b)}},
}
\bib{Laz04}{book}{
   author={Lazarsfeld, Robert},
   title={Positivity in algebraic geometry. I},
   series={Ergebnisse der Mathematik und ihrer Grenzgebiete. 3. Folge. A
   Series of Modern Surveys in Mathematics [Results in Mathematics and
   Related Areas. 3rd Series. A Series of Modern Surveys in Mathematics]},
   volume={48},
   note={Classical setting: line bundles and linear series},
   publisher={Springer-Verlag, Berlin},
   date={2004},
   pages={xviii+387},
   isbn={3-540-22533-1},
   review={\MR{2095471}},
   doi={10.1007/978-3-642-18808-4},
}
\bib{L93}{article}{
	author={Lorenzini, Anna},
	title={The minimal resolution conjecture},
	journal={J. Algebra},
	volume={156},
	date={1993},
	number={1},
	pages={5--35},
	issn={0021-8693},
	review={\MR{1213782 (94g:13005)}},
	doi={10.1006/jabr.1993.1060},
}
\bib{SS84}{article}{
   author={Mori, Shigefumi},
   author={Mukai, Shigeru},
   title={Classification of Fano $3$-folds with $B_2\geq 2$. I},
   conference={
      title={Algebraic and topological theories},
      address={Kinosaki},
      date={1984},
   },
   book={
      publisher={Kinokuniya, Tokyo},
   },
   date={1986},
   pages={496--545},
   review={\MR{1102273}},
}

\end{biblist}
\end{bibdiv}
\end{document}